\documentclass[reqno]{amsart}
\usepackage{xcolor}
\definecolor{rltblue}{rgb}{0,0,0.4}
\definecolor{drkred}{rgb}{0.6,0,0}
\definecolor{drkgreen}{rgb}{0,0.4,0}
\usepackage[colorlinks=true, urlcolor=rltblue, citecolor=drkgreen, linkcolor=drkgreen] {hyperref}
\usepackage{orcidlink}
\usepackage{graphicx} 
\usepackage[margin=1in]{geometry}
\usepackage{comment}

\usepackage{thmtools} 
\usepackage{thm-restate}
\usepackage{float}
\usepackage{amssymb, amsthm, amsmath, amsfonts}
\usepackage{thmtools}
\usepackage{mathrsfs}
\usepackage{latexsym}
\usepackage{datetime}
\usepackage{hyperref}
\usepackage{cleveref}
\usepackage{bm}
\usepackage{tikz}
\usepackage{enumerate}
\usepackage{enumitem}

\newtheorem{theorem}{Theorem}
\newtheorem{lemma}[theorem]{Lemma}
\newtheorem{corollary}[theorem]{Corollary}
\newtheorem{proposition}[theorem]{Proposition}

\newtheorem{definition}[theorem]{Definition}

\newtheorem{remark}[theorem]{Remark}

\newcommand{\A}{\mathcal{A}}
\newcommand{\B}{\mathcal{B}}
\newcommand{\C}{\mathcal{C}}
\newcommand{\D}{\mathcal{D}}
\newcommand{\Std}{\mathcal{S}}
\newcommand{\PR}{\mathcal{PR}}
\newcommand{\N}{\mathbb N}
\newcommand{\Nat}{\mathbb N}
\newcommand{\LL}{\mathcal{L}}

\newcommand{\Succ}{S}
\newcommand{\Pred}{\operatorname{Pred}}
\newcommand{\code}[1]{\ulcorner{#1}\urcorner}

\newcommand{\pair}[1]{\langle{#1}\rangle}

\newtheorem{convention}[theorem]{Convention}
\newtheorem{observation}[theorem]{Observation}

\usepackage[backend=biber,url=false,maxbibnames=10]{biblatex}
\author[Bazhenov]{Nikolay Bazhenov\orcidlink{0000-0002-5834-2770}}
\author[Georgiev]{Ivan Georgiev\orcidlink{0000-0002-2475-6086}}
\author[Kalociński]{Dariusz Kalociński\orcidlink{0000-0002-3044-525X}}
\author[Vatev]{Stefan Vatev\orcidlink{0000-0001-5719-1467}}
\author[Wrocławski]{Michał Wrocławski\orcidlink{0000-0003-2679-7321}}

\address[Bazhenov]{Novosibirsk State University, Novosibirsk, Russia and Innopolis University, Innopolis, Russia}
\email{\href{mailto:bazhenov@math.nsc.ru}{bazhenov@math.nsc.ru}}
\urladdr{\url{https://bazhenov.droppages.site}}
\address[Georgiev]{Faculty of Mathematics and Informatics, Sofia University, Bulgaria}
\email{\href{mailto:ivandg@fmi.uni-sofia.bg}{ivandg@fmi.uni-sofia.bg}}
\address[Kalociński]{Institute of Computer Science, Polish Academy of Sciences, Poland}
\email{\href{mailto:dariusz.kalocinski@gmail.com}{dariusz.kalocinski@gmail.com}}
\urladdr{\url{https://www.dariuszkalocinski.com}}
\address[Vatev]{Faculty of Mathematics and Informatics, Sofia University, Bulgaria}
\email{\href{mailto:stefanv@fmi.uni-sofia.bg}{stefanv@fmi.uni-sofia.bg}}
\urladdr{\url{https://fmi.uni-sofia.bg/fmi/logic/stefanv/}}
\address[Wrocławski]{Faculty of Philosophy, University of Warsaw, Poland}
\email{\href{mailto:m.wroclawski@uw.edu.pl}{m.wroclawski@uw.edu.pl}}

\subjclass[2020]{03C57, 03D20}
\keywords{punctual structures; primitive recursion; elementary functions; acceptability} 

\thanks{The work of N.~Bazhenov is supported by the Mathematical Center in Akademgorodok under the agreement No.~075-15-2025-349 with the Ministry of Science and Higher Education of the Russian Federation.
  The work of I.~Georgiev and S.~Vatev is supported by NextGenerationEU, through the NRRP of Bulgaria, project No.~BG-RRP-2.004-0008 - C01-70/123/195.
  The work of D.~Kalociński and M.~Wrocławski is supported by the National Science Centre Poland under the agreement No.~2023/49/B/HS1/03930.}

\title{Punctually Standard and Nonstandard Models of Natural Numbers}

\begin{document}

\maketitle

\begin{abstract}

Abstract models of computation often treat the successor function $S$ on
$\mathbb{N}$ as a primitive operation, even though its low-level implementations correspond to non-trivial programs operating on specific numerical representations. This behaviour can be analyzed without referring to notations by replacing the standard interpretation $(\mathbb{N}, S)$ with
an isomorphic copy $\mathcal A = (\mathbb{N}, S^{\mathcal A})$, in which
$S^{\mathcal A}$ is no longer computable by a single instruction. While the class of computable functions on \(\mathcal{A}\) is standard if \(S^{\mathcal{A}}\) is computable, existing results indicate that this invariance fails at the level of primitive recursion.
We investigate which sets of operations have the property that if they are primitive recursive on $\mathcal A$ then the class of primitive recursive functions on $\mathcal A$ remains standard. We call such sets of operations \emph{bases for punctual standardness}. We exhibit a series of non-basis results which show how the induced class of primitive recursive functions on $\mathcal A$ can deviate substantially from the standard one. In particular, we demonstrate that a wide range of natural operations—including large
subclasses of primitive recursive functions studied by Skolem and Levitz—fail
to form such bases.
On the positive side, we exhibit natural finite bases for punctual standardness. Our results answer a question recently posed by Grabmayr and establish punctual categoricity for certain natural finitely generated structures.

\end{abstract}

\section{Introduction}

When introducing a complexity class, one sometimes treats $(\mathbb{N}, S)$—that is, the natural numbers with the successor function—as a primitive notion. For example, loop-programs for primitive recursion \cite{MeyerR-67}, as well as while-programs (see, e.g., \cite{odifreddi_classical_1989}, Definition I.5.5) and register machines (see, e.g., \cite{shepherdson_computability_1963}) for partial recursive functions, all employ the instruction $X := X + 1$, which replaces the number stored in the register $X$ by its successor.

In contrast, symbolic models of computation such as Turing machines, Post machines or string rewriting systems do not take the successor operation as primitive but require it to be defined. For example, the successor operation on decimal numerals requires a non-trivial program that computes it. If one adopts a different notation, a different program for the successor is required. Consequently, the successor can be defined in many distinct ways, each time inducing a distinctive sequence of objects for representing the succession of numbers.

The same observation applies at a more abstract level, when considering the natural numbers: treating the successor operation as primitive can be replaced by a program whose denotation yields an isomorphic behavior. Even more directly, one may simply alter the semantics of \(X := X+1\). In either case, redefining the successor induces a new structure \(\mathcal A = (\mathbb N, S^{\mathcal A})\) isomorphic to \((\mathbb N, S)\). This affects our understanding of the successor (which now becomes \(S^{\mathcal A}\)) and, by extension, of the predecessor, addition, and all other functions.

This idea can be made precise as follows. Let \(c_\mathcal{A} : (\mathbb N, S) \to \mathcal A\) be an isomorphism. When the structure is clear from the context, we omit the subscript $\mathcal{A}$. The representation of a unary function \(f\) in \(\mathcal A\) is the isomorphic image of \(f\) relative to \(c\), namely \(c \circ f \circ c^{-1}\), succinctly denoted by \(f^{\mathcal A}\). This convention naturally extends to non-unary functions and relations. Crucially, along with this reinterpretation, the notion of a complexity class may also change. The following notion comes from computable structure theory: 
\begin{definition}\label{def computable on}
We say that a function $f$ is \emph{computable (resp. primitive recursive) on $\mathcal A$} if $f^{\mathcal A}$ is computable (resp. primitive recursive).
\end{definition}
Let $\mathcal C^{\mathcal A}$ denote the class of functions computable on $\mathcal A$. It is well known—and not difficult to show—that the equality $\mathcal C = \mathcal C^{\mathcal A}$ holds if and only if $S^{\mathcal A}$ is computable (see, e.g., \cite{shapiro_acceptable_1982}).

The equality $\mathcal C = \mathcal{C}^{\mathcal A}$ is desirable for the following reason: altering the semantics of the instruction $X := X+1$ should not change the class of computable functions, as long as we want to remain consistent with the Church--Turing thesis.
The condition $\mathcal C = \mathcal C^{\mathcal A}$ thus isolates the copies of $(\mathbb N, S)$ that we may call \emph{standard} or \emph{acceptable}. 
This notion has been investigated from several perspectives, including computability theory (acceptable numberings in the sense of Rogers~\cite{rogers_theory_1967}), theoretical computer science (acceptable programming systems in the sense of Riccardi~\cite{Riccardi} and Royer~\cite{Royer}), computable structure theory~\cite{Bazhenov-Kalocinski-23}, and philosophy~\cite{shapiro_acceptable_1982,shapiro_computability_2022}.


In this paper, we pursue this line of inquiry from the perspective of primitive recursion, using the methods of punctual structure theory \cite{kalimullin_algebraic_2017,bazhenov_foundations_2019,downey_foundations_2021,askes_online_2022}. Although the class of primitive recursive functions is very broad and extends well beyond practical computability, it nevertheless marks a fundamental theoretical boundary: it separates general computability from computability by algorithms that use only bounded loops \cite{MeyerR-67}. This boundary is familiar to computer science. There are natural and well-studied problems whose decidability has been known for decades, yet whose computational complexity was eventually shown to lie beyond primitive recursion. A prominent example is the reachability problem for Vector Addition Systems \cite{Reachability-VAS}.

 The central motivating observation of the present paper appears in the founding paper of punctual structure theory. While it is clear that, for a primitive recursive copy $\mathcal  A$ of $(\mathbb N,S)$, the isomorphism $c_\mathcal{A}$ (from the standard copy to $\mathcal A$) is primitive recursive, one can show the following:
\begin{proposition}[\cite{bazhenov_foundations_2019}, Example 4.1(3), p. 86]\label{motivating thm}
    There is $\mathcal{A}=(\mathbb{N},S^{\mathcal{A}})$, an isomorphic copy of $\mathcal{S}=(\mathbb{N},S)$, such that $S^{\mathcal{A}}$ is primitive recursive, but $c_\mathcal{A}^{-1}$ is not.
\end{proposition}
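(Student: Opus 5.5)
We build $\mathcal A$ so that the successor is primitive recursive while the numbers labelling the standard chain are placed very sparsely and irregularly. Fix a computable function $h$ that is not primitive recursive, for instance a diagonal version of the Ackermann function that dominates every primitive recursive function. Its graph $G_h=\{(x,y): h(x)=y\}$ is primitive recursive. The plan is to let the element $c(n)$ be a code that records a halting witness for the computation of $h$ on small arguments. Then going from $n$ to $c(n)$ is cheap, but going back from $c(n)$ to $n$ is expensive.

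Concretely, we enumerate the chain $c(0), c(1), c(2), \dots$ in increasing order, so $c$ is strictly increasing, but we let it skip large gaps, and we use the gaps only to place nothing at all. Since every natural number must belong to the domain, this does not quite work as stated. Instead we use the standard ``delay'' trick. We let $S^{\mathcal A}$ be defined on the pair code $\langle x,s\rangle$, where $s$ is a stage. We run a primitive recursive stage-by-stage construction in which at each stage $s$ we decide primitive recursively the value of $S^{\mathcal A}$ on all elements $\le s$. Whenever a fresh element is needed we take the least unused number.

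To defeat $c^{-1}$ we make the element $s$ sit at a chain position that is roughly the output of a fast-growing process. During stage $s$ we extend the chain at its end by one new element (namely $s$) if some primitive recursive condition holds. Otherwise we insert $s$ where it certifies a huge jump. The cleanest version is the following. At stage $s$, element $s$ is appended as the successor of the current last element. In addition, if at stage $s$ we see (via the primitive recursive graph $G_h$, checked up to bound $s$) that $h(k)=y$ for a new $k$, we instead make element $s$ the position-$y$ element. The intermediate positions are filled by later stages, in order. So $c^{-1}(s)\ge h(k)$ for an element $s$ whose size is only about the size of the certificate of $h(k)$.

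The key point is that $S^{\mathcal A}$ must still be total and primitive recursive. We answer $S^{\mathcal A}(x)$ by running the construction to stage $\max(x, \text{something primitive recursive})$. The difficulty is that the successor of an element may be created only at a late stage. We resolve this by always creating successors immediately: when we reserve position $y$ for $s$, the chain between the current end and position $y$ is not materialized yet. So instead of placing $s$ at position $y$, we encode the jump as a long run of elements created lazily, one per stage. That is, element $s$ is appended immediately, but the ``label'' $n$ it carries is irrelevant. What matters is that $c^{-1}$ must count the elements preceding $s$.

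This suggests the simplest correct construction, which I would use. The domain is ordered as an $\omega$-chain where each new stage appends one element, giving $c(n)=n$. That is trivial, so the non-primitive-recursiveness must come from interleaving. The construction grows the chain at two ends: a head segment being filled in from the start, and a far segment. That is impossible for $\omega$ without knowing the distance.

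The genuinely hard step, and the one I expect to be the main obstacle, is exactly this: arranging that $S^{\mathcal A}(x)$ is always computed promptly, while the element $x$ lies at a non-primitive-recursively-large position. My first attempt will therefore be to build the chain so that each element $x$ gets its successor at stage $x+1$. The stage-$s$ element is appended either to the main chain or to a finite ``pending'' chain. Pending chains are merged into the main chain only once the main chain's length reaches a value predicted by $h$; until then the pending chain grows and the main chain grows in parallel. I would then check that finitely many pending chains exist at each stage, that each merge eventually happens, that $S^{\mathcal A}$ is primitive recursive, and that $c^{-1}(x)$ for $x$ the head of the $k$-th pending chain is at least $h(k)$.
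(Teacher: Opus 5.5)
Your final plan, a main chain plus finite pending chains merged once the main chain is long enough, is a legitimate strategy: it is essentially the mainland--island method. But most of the proposal consists of attempts you yourself abandon, and for the surviving plan every verification is deferred to ``I would then check''. As literally stated, the plan is also inconsistent. If only one new element is created per stage and every $x$ receives its successor at stage $x+1$, you are forced to $S^{\A}(x)=x+1$. You must add a fresh element to the end of \emph{every} open chain at every stage, as ``grows in parallel'' suggests. The merge test for the $k$-th chain at stage $s$ must be the primitive recursive check ``$h(k)=y$ for some $y\le s$, and the main chain has length $\ge y$''.

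The substantive gap is the final inference. Knowing $c^{-1}(x_k)\ge h(k)$ for the head $x_k$ of the $k$-th pending chain says nothing about $c^{-1}\notin\PR$ unless you control the size of $x_k$. You never say when pending chains are opened, so you have no such bound. Suppose the $k$-th chain is opened only after the $(k-1)$-st has merged. Then $x_k$ is already of the order of $h(k-1)$, and you need an extra argument comparing $h(k)$ with $h(k-1)$. For $h(k)=A(k,k)$ you can use that $g(h(k-1))<h(k)$ eventually for every primitive recursive $g$; this is property 3 of the paper's sequence $a$.

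The clean fix is to open the $k$-th chain at stage $k$ with a fresh head. Then $x_k\le b(k)$ for a primitive recursive $b$. If $c^{-1}$ were primitive recursive, so would be $k\mapsto\max_{x\le b(k)}c^{-1}(x)\ge h(k)$, contradicting domination. With these repairs your stage-by-stage route works.

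The paper (Section~3) needs no stages at all. With $a_n=A(n,n)$, whose graph is elementary, it defines $S^{\A}$ outright by $a_n\to free(n)\to h(\pair{n,0})\to\cdots\to h(\pair{n,a_{n+1}})\to a_{n+1}$. There $h$ is the paper's enumeration of $\N\setminus Ran(a)$, not your function. You declared the ``far segment'' impossible to attach without knowing the distance. It is attached because the last block element carries $i=a_{n+1}$ in its name, so its successor is simply $i$. Then $free$ is primitive recursive while $c^{-1}(free(n))>a_n$. This static version also gives explicit formulas for $c^{-1}$, which the paper exploits in the subsequent propositions.
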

The structure $\mathcal A$ from Proposition \ref{motivating thm} is acceptable, that is, $\mathcal C = \mathcal C^{\mathcal A}$, since $S^{\mathcal A}$ is computable. However, by Observation \ref{observation} (proved below), the above proposition has the following corollary: there exists a primitive recursive function $f$ such that its representation $f^{\mathcal A}$ is not primitive recursive. If we denote by $\mathcal{PR}$ and $\mathcal{PR}^{\mathcal A}$ the classes of primitive recursive functions and of functions primitive recursive on $\mathcal A$, respectively, it follows that
$\mathcal{PR} \neq \mathcal{PR}^{\mathcal A}$. In other words, the class of functions that are primitive recursive on $\mathcal A$ does not coincide with the standard class of primitive recursive functions.

From the perspective of primitive recursive computation, the above situation is disquieting. In particular, if we alter in loop-programs the semantics of the instruction $X := X + 1$ by replacing it with $S^{\mathcal A}$ we obtain a nonstandard class of primitive recursive functions $\PR^\mathcal{A}$, despite $S^{\mathcal A}$ itself being primitive recursive\footnote{In fact, in this particular case one can compute $S^\mathcal{A}$ in linear time.} and despite the usage of apparently bounded loops. This runs counter to the restricted version of the Church--Turing thesis, an implicit quasi-formal assumption, formulated explicitly in Section~1.6 of~\cite{bazhenov_foundations_2019}: a function is primitive recursive if and only if it can be described by an algorithm that uses only bounded loops.


Hence, we are justified in introducing the following notion. Following \cite{kalimullin_algebraic_2017,bazhenov_foundations_2019}, by a \emph{punctual} structure we mean a structure with domain $\mathbb N$ whose signature relations and operations are primitive recursive.
\begin{definition}[punctual standardness]
We say that a punctual copy $\mathcal A$ of $\mathcal S = (\mathbb N, S)$ is \emph{punctually standard} if $\PR = \PR^\mathcal{A}$. Otherwise we say that $\mathcal A$ is \emph{punctually nonstandard}.
\end{definition}

A necessary and sufficient condition for punctual standardness can be formulated in terms of a suitably constrained isomorphism with the standard copy $\Std$. 
\begin{definition}[\cite{kalimullin_algebraic_2017}]
Punctual structures $\mathcal A$ and $\mathcal B$ are \emph{punctually isomorphic} if there exists a primitive recursive isomorphism $f$ between them whose inverse $f^{-1}$ is also primitive recursive. We call such an isomorphism $f$ \emph{punctual}.
\end{definition}
\begin{observation}\label{observation}
A copy $\mathcal A$ of $\Std$ with domain $\N$ is punctually isomorphic to $\Std$ if and only if $\mathcal A$ is punctually standard.

\end{observation}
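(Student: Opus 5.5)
Both directions will be argued via the isomorphism $c = c_{\mathcal A}\colon \Std \to \mathcal A$ and the identity $f^{\mathcal A} = c\circ f\circ c^{-1}$ (componentwise for $k$-ary $f$: $f^{\mathcal A}(x_1,\dots,x_k) = c(f(c^{-1}(x_1),\dots,c^{-1}(x_k)))$). Since $\Std$ is rigid (the only automorphism of $(\N,S)$ is the identity, because each element is $S^n(0)$), $c$ is the \emph{unique} isomorphism from $\Std$ to $\mathcal A$. Hence $\mathcal A$ is punctually isomorphic to $\Std$ if and only if both $c$ and $c^{-1}$ are primitive recursive. The observation is implicitly about punctual $\mathcal A$; I will note that the right-hand side already forces $S^{\mathcal A}\in\PR$, since $S\in\PR=\PR^{\mathcal A}$.

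($\Rightarrow$) Suppose $c$ and $c^{-1}$ are primitive recursive. If $f\in\PR$, then $f^{\mathcal A}$ is a composition of primitive recursive functions, so $f\in\PR^{\mathcal A}$. Conversely, if $f^{\mathcal A}$ is primitive recursive, then
\[
f(x_1,\dots,x_k) = c^{-1}\bigl(f^{\mathcal A}(c(x_1),\dots,c(x_k))\bigr)
\]
is primitive recursive. Thus $\PR=\PR^{\mathcal A}$.

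($\Leftarrow$) Suppose $\PR=\PR^{\mathcal A}$. The key trick is to apply the hypothesis to $c$ and $c^{-1}$ themselves, viewed as unary functions on $\N$:
\[
c^{\mathcal A} = c\circ c\circ c^{-1} = c, \qquad (c^{-1})^{\mathcal A} = c\circ c^{-1}\circ c^{-1} = c^{-1}.
\]
So for $g\in\{c,c^{-1}\}$ we have $g^{\mathcal A}=g$, and therefore $g\in\PR^{\mathcal A}$ if and only if $g\in\PR$. This alone is circular, so one of the two must be shown primitive recursive independently. Since $S\in\PR=\PR^{\mathcal A}$, the function $S^{\mathcal A}$ is primitive recursive. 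Then $c$ is primitive recursive via the recursion $c(0)=c(0)$ (a constant) and $c(n+1)=S^{\mathcal A}(c(n))$. Now $c\in\PR$ gives $c\in\PR^{\mathcal A}$, but I need $c^{-1}$. I will instead use $c^{-1}=(c^{-1})^{\mathcal A}$: showing $c^{-1}\in\PR$ is equivalent to $c^{-1}\in\PR^{\mathcal A}$, which is again circular.

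The main obstacle is breaking this circularity. The way out is to find a primitive recursive $h$ with $h^{\mathcal A}=c^{-1}$, i.e.\ $c\circ h\circ c^{-1}=c^{-1}$, i.e.\ $h=c^{-1}\circ c^{-1}\circ c$. That does not obviously help. The better route: apply the hypothesis to $c^{-1}$ in the other direction. Since $\PR\subseteq\PR^{\mathcal A}$ and $c\in\PR$, we get $c^{\mathcal A}\in\PR$, which is just $c$ again. Instead take $f=c$ in the inclusion $\PR^{\mathcal A}\subseteq\PR$ with $\mathcal A$ replaced by conjugation: $f^{\mathcal A}=c\,f\,c^{-1}$. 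Choose $f=c^{-1}\circ c^{-1}$? Its image is $c^{-1}$ only after one more conjugation.

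Cleanest: $\mathrm{id}^{\mathcal A}=\mathrm{id}$ does nothing, but $(c^{-1})^{\mathcal A}=c^{-1}$ means $c^{-1}\in\PR$ iff $c^{-1}\in\PR^{\mathcal A}$. By the hypothesis $\PR^{\mathcal A}=\PR$, it suffices to show $c^{-1}\in\PR^{\mathcal A}$ directly, i.e.\ that $c\circ c^{-1}\circ c^{-1}$ is PR. So, failing a slicker argument, I expect the intended solution to be: take $f=c$, which lies in $\PR$, so $f^{\mathcal A}=c$; and the function $x\mapsto c^{-1}(x)$ is $(c^{-1})^{\mathcal A}$. I will try the substitution $f:=c^{-1}\circ c^{-1}$, noting $f^{\mathcal A}=c\circ c^{-1}\circ c^{-1}\circ c^{-1}$, and look for a combination that lands in the hypothesis. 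If this fails, I will find a concrete primitive recursive $f$ whose conjugate encodes $c^{-1}$ (e.g.\ a pairing or characteristic-function trick) as the decisive step.
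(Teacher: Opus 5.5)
Your ($\Rightarrow$) direction is correct and matches the paper. Your preliminary steps for ($\Leftarrow$) are also right: $S\in\PR=\PR^{\mathcal A}$ gives $S^{\mathcal A}\in\PR$, so $c=c_{\mathcal A}$ is primitive recursive.

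The ($\Leftarrow$) direction, however, is never completed. Each route you try is circular, and candidates such as $c^{-1}\circ c^{-1}$ cannot be used at all. The hypothesis $\PR=\PR^{\mathcal A}$ only tells you that $f^{\mathcal A}$ is primitive recursive for functions $f$ already known to be primitive recursive. Your closing appeal to ``a pairing or characteristic-function trick'' is not an argument. A characteristic-function trick cannot succeed on its own, because the graph of $c^{-1}$ is already primitive recursive: $c^{-1}(x)=p \iff c(p)=x$. What is missing is not a way to recognize the correct $p$, but a primitive recursive \emph{upper bound} on $c^{-1}$.

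The missing idea is to obtain that bound by applying $\PR\subseteq\PR^{\mathcal A}$ to a suitable function built from $c$. Since $c$ is injective, the $y+1$ values $c(0),\dots,c(y)$ include one that is $\geq y$. Hence $f(y)=\mu_{t\le y}[c(t)\ge y]$ is primitive recursive and satisfies $c(f(y))\ge y$ for all $y$. Substituting $y=c^{-1}(x)$ gives $f^{\mathcal A}(x)=c(f(c^{-1}(x)))\ge c^{-1}(x)$. Moreover, $f^{\mathcal A}$ is primitive recursive because $f\in\PR=\PR^{\mathcal A}$. Therefore $c^{-1}(x)=\mu_{p\le f^{\mathcal A}(x)}[c(p)=x]$ is primitive recursive, which is exactly the paper's argument.
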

\begin{proof}
If $\A$ is punctually isomorphic to $\Std$, then $c_\A$ and $c_\A^{-1}$ are primitive recursive bijections, therefore $\PR^\A = \{ c_\A \circ f \circ c_\A^{-1}\;|\; f \in \PR\} \;=\; \PR$ and $\A$ is punctually standard. Conversely, let $\A$ be punctually standard. For any $x$, we consider the numbers $c_\A(0),c_\A(1),\ldots,c_\A(x)$. Since $c_\A$ is injective, at least one of them is $\geq x$. We define $f(x) = \mu_{t \leq x} [c_\A(t) \geq x]$. Clearly, the function $f$ is primitive recursive and $c_\A(f(x)) \geq x$ for all $x$. But this also implies that $c_\A (f (c_\A^{-1}(x))) \geq c_\A^{-1}(x)$ for all $x$. Since $\A$ is punctually standard, the function $c_\A \circ f \circ c_\A^{-1}$ is primitive recursive. We conclude that $c_\A^{-1}$ is also primitive recursive and $\A$ is punctually isomorphic to $\Std$.
\end{proof}

A characterization of punctual standardness in terms of punctual isomorphism with the standard copy, while formally correct, has a drawback: it assesses punctual standardness relative to a particular representation of the natural numbers, namely $\Std$. In a sense, $\Std$ is punctually standard by definition, a choice that may be regarded as arbitrary. For this reason, it is reasonable to view this approach as external. Instead, we propose to seek a more general and robust \emph{internal} characterization. 

Guidance for such a characterization already comes from existing approaches to acceptability. For example, Rogers viewed acceptability of numberings of partial recursive functions as relative to the canonical numbering \cite{rogers_theory_1967}, but this approach was later replaced by characterizing acceptability internally via control structures \cite{Riccardi} or via realizability of natural arithmetic operations \cite{shapiro_acceptable_1982,Bazhenov-Kalocinski-23}. Following the second approach, we aim to isolate a preferably minimal and natural collection of basic arithmetic operations whose primitive recursiveness on $\mathcal A$ amounts to the punctual standardness of $\mathcal A$. A similar direction has been recently considered by Grabmayr (\cite{grabmayr_structure_2025}, p. 10).





\subsection{Contributions}\label{sec constributions}


The main new working concept of this paper is as follows.

\begin{definition}[basis for punctual standardness]\label{def:basis}
A class of functions $\mathcal F \subseteq \PR$ is a \emph{basis for punctual standardness} if for every copy $\mathcal A$ of $\mathcal S$ the following holds: if members of $\mathcal F$ are primitive recursive on $\mathcal A$, then $\mathcal A$ is punctually standard.

\end{definition}
Notice that the above concept can be generalized to an arbitrary complexity class of the functions on the natural numbers. In particular, by the results of Shapiro \cite{shapiro_acceptable_1982}, the class $\{S\}$ is a basis for acceptability, because $S$ is computable and for every copy $\mathcal A$ of $\Std$, if $S$ is computable on $\mathcal A$ then $\mathcal C = \mathcal C^\mathcal{A}$.

Our contribution is twofold. 

First, we exhibit several non-basis results---examples of natural classes that fail to provide bases for punctual standardness. Among those, we witness two phenomena. On the one hand, some punctual copies of $\Std$ seem to be very different from $\Std$ in the sense that surprisingly large natural subclasses of $\PR$ fail to remain primitive recursive in those copies (Contribution 1a); these examples are obtained by constructions that leverage the properties of the Ackermann function \cite{Ackermann}. On the other hand, we show that punctual copies of $\Std$ may remain punctually nonstandard despite preserving primitive recursiveness of large natural subclasses of $\PR$ (Contribution 1b); these examples are obtained by fine-tuning an island-technique, known from punctual structure theory, and then applying it to large classes of primitive recursive functions previously studied by Skolem and Levitz. Altogether, Contributions 1a and 1b show that finding bases for punctual standardness is quite non-trivial.

Second, we provide \emph{natural} bases for punctual standardness (Contribution~2). Their naturalness stems from the use of familiar arithmetic operations. The result is obtained through a complexity-theoretic analysis of the construction by Kalimullin et al.~\cite{kalimullin_algebraic_2017} and an application of known substitution bases for elementary functions. This resolves a technical question recently posed by Grabmayr~\cite{grabmayr_structure_2025}, which we discuss in more detail later. Moreover, our bases for punctual standardness immediately give rise to natural finitely generated \emph{punctually categorical} structures, that is, structures whose punctual copies are always punctually isomorphic. Previously, punctually categorical structures were known either through unnatural examples or in the non-finitely generated setting.

We describe each of the contributions in more detail below.

\subsubsection*{Contribution 1a}

The first set of negative results exhibits punctual copies of $\mathcal S$ on which certain natural operations are primitive recursive, yet nevertheless large classes of primitive recursive functions fail to remain primitive recursive on them. The proofs rely on a novel method that leverages properties of the Ackermann function—a paradigmatic example of a computable but non-primitive recursive function. This approach enables a systematic study of the resulting models; in particular, it allows us to show that certain broad classes of functions or relations are not primitive recursive on a given copy.

The following theorem is proved in Section \ref{sec contribution 1a technical} (see Proposition \ref{prop2x} and Lemma \ref{ineq}). The initial goal of the construction was to produce a punctual copy $\mathcal A$ of $\mathcal S$ such that the predecessor function is not primitive recursive on $\mathcal{A}$. In Proposition \ref{prop2x} we show that, in fact, many more natural primitive recursive functions are not primitive recursive on~$\mathcal A$.

\begin{theorem} There exists a punctual copy $\mathcal{A}$ of $\mathcal{S}$ such that the ordering is primitive recursive on $\A$ and whenever $f: \mathbb{N} \to \mathbb{N}$ is a primitive recursive function larger than $2x$ for all sufficiently large $x$, $f$ is not primitive recursive on $\mathcal{A}$.
\end{theorem}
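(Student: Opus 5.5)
The plan is to build $\mathcal A$ by a stage-by-stage \emph{island construction} with two islands at any time. There is a growing \emph{head} chain containing the current initial segment of $\mathcal A$, and a \emph{tail} island that is declared early with a small code but is eventually attached far along the chain. The attaching is triggered by a slow Ackermann-type computation. The aim is to produce, for infinitely many $n$, an element $x_n$ such that:
\begin{itemize}
\item $x_n$ has small code but sits at a position $t_n=c_\A^{-1}(x_n)$ that is Ackermann-large in $x_n$;
\item every position beyond roughly $2t_n$ carries a code of size at least $t_n$.
\end{itemize}

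\textbf{Construction.} Fix a step-counting evaluation of the Ackermann function $\mathrm{Ack}(n,n)$. Work in cycles $n=0,1,2,\dots$. When cycle $n$ opens at stage $s_n$, create a tail island consisting of one fresh element $x_n$; its code is about $s_n$, hence small. At each subsequent stage:
\begin{itemize}
\item append two fresh elements to the head chain;
\item append one fresh element to the tail island;
\item perform one more step of the computation of $\mathrm{Ack}(n,n)$.
\end{itemize}
Every element receives its successor at most one stage after it is created. Hence $S^\A(y)$ can be read off by simulating the construction up to stage about $y$, so $S^\A$ is primitive recursive.

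When the Ackermann computation halts, make the successor of the current last head element equal to $x_n$. The two islands merge into one head chain, and cycle $n+1$ opens.

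The order on $\A$ is primitive recursive for the same reason. Within an island the relative order is fixed when elements are created, and the head always lies below the tail. So to compare $a$ and $b$ one simulates up to stage $\max(a,b)$.

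\textbf{Counting.} Suppose the merge of cycle $n$ happens after $t$ steps of the computation. Then:
\begin{itemize}
\item the head has length roughly $2t$ plus earlier material, so $x_n$ sits at position $p_n\gtrsim 2t$, with $t\geq \mathrm{Ack}(n,n)$ and $x_n\leq s_n$;
\item the tail then occupies positions $p_n,\dots,p_n+t$, roughly up to $\tfrac32 p_n$;
\item every code used by the merge stage is below about $3(s_n+t)$, which is $O(p_n)$.
\end{itemize}
Since $c_\A$ is a bijection and only $O(p_n)$ codes exist by the merge stage, every position beyond about $\tfrac32 p_n+O(1)$ gets a code created later, hence a code of size at least $t\gtrsim p_n/2$.

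\textbf{Conclusion.} Let $f$ be primitive recursive with $f(z)>2z$ for large $z$. Then $f(p_n)>2p_n>\tfrac32p_n+O(1)$, so
\[
f^\A(x_n)=c_\A(f(p_n))\geq t\geq \mathrm{Ack}(n,n).
\]
Here $x_n\leq s_n$, and $s_n$ is itself bounded by earlier Ackermann values. If $f^\A$ were primitive recursive it would be majorized by $\mathrm{Ack}(k,\cdot)$ for some $k$. We then need
\[
\mathrm{Ack}(n,n) > \mathrm{Ack}(k,s_n) \quad\text{for infinitely many } n,
\]
which gives the contradiction.

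I expect this last inequality to be the main obstacle. The difficulty is that $s_n$ can itself be roughly $\mathrm{Ack}(n-1,n-1)$, which is huge. The fix I would try first is to let cycle $n$ wait for a faster-growing target, such as $\mathrm{Ack}(n,\,s_n)$ or iterates thereof. The point is that the time bound dominates every $\mathrm{Ack}(k,\cdot)$ evaluated at the code of $x_n$, which is the content of the auxiliary inequality, Lemma~\ref{ineq}. The bookkeeping for the constants $\tfrac32$ versus $2$ also needs care, and can be tuned by changing the head-to-tail growth ratio.
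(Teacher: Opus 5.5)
Your proposal is correct, and the step you flag as the main obstacle is not a real one. With target $\mathrm{Ack}(n,n)$, the code of $x_n$ is the number $N_{n-1}$ of elements created before cycle~$n$ opens. This is roughly three times the total running time of the earlier cycles, so it is about $3s_n$ rather than $\le s_n$. Choose the evaluation so that on $\mathrm{Ack}(m,m)$ it runs at least $a_m=A(m,m)$ and at most $G_0(a_m)$ steps, for some primitive recursive $G_0$; for instance, test $y=0,1,2,\dots$ against the elementary graph of $A$. Then $x_n\le G(a_{n-1})$ for a primitive recursive $G$. The inequality $\mathrm{Ack}(k,G(a_{n-1}))<a_n$ for all large $n$ is exactly Property~\ref{p3} of the preliminaries, so your inequality holds for all large $n$, not just infinitely many. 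Your fallback is simpler still: let cycle $n$ wait for $\mathrm{Ack}(n,x_n)$, using the code rather than the stage as second argument. Then $\mathrm{Ack}(k,x_n)>f^\A(x_n)>t\ge\mathrm{Ack}(n,x_n)$ is absurd for $n\ge k$. Note that Lemma~\ref{ineq} of the paper is the primitive recursiveness of $<^\A$, not an Ackermann inequality. No tuning of constants is needed either: $2p_n=p_n+N_{n-1}+2t\ge p_n+t+1=N_n$. At the merge the codes $0,\dots,N_n-1$ occupy exactly the positions $0,\dots,N_n-1$, so the element at position $f(p_n)$ has code $\ge N_n>t$.

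Your route shares the paper's underlying idea: an element of comparatively small code placed at an Ackermann-large position, followed by a long stretch. The implementation differs. The paper does not build $\A$ in stages. It writes $\A$ down explicitly as the chain $a_n\to free(n)\to h(\pair{n,0})\to\dots\to h(\pair{n,a_{n+1}})\to a_{n+1}$, with $h$ enumerating the complement of $Ran(a)$ and $free$ enumerating the leftover set $F$. Your $x_n$ corresponds to $free(n)$, whose code is primitive recursive in $n$ and whose $\A$-number $P=3n+1+\sum_{k\le n}a_k$ is known in closed form. Where you count fresh codes, the paper decodes. Property~\ref{p3} gives $f(P)<a_{n+1}$, so $f^\A(free(n))=h(\pair{n,i})$ stays in the same block and $i$ is primitive recursive in $n$; meanwhile $f(P)\ge 2P$ forces $i\ge a_n$. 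The order is handled by the explicit function $pos$ in the proof of Lemma~\ref{ineq}. Your argument never needs to locate $f^\A(x_n)$, only to know that everything past the merged tail has a large code. This makes it more robust and closer in spirit to the mainland--island method of Section~\ref{sec contribution 1b technical}. The paper's explicit presentation, in turn, gives closed-form $\A$-numbers and $c^{-1}\equiv_E a$. These are exploited in the subsequent results of Section~\ref{sec contribution 1a technical} on the preimage of the predecessor and on the relations $R^\A$.
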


The next theorem, proved in Section \ref{cont1a2} (see Proposition \ref{main1}), demonstrates a similar phenomenon: we can make addition primitive recursive, but still a great deal of natural primitive recursive functions may fail to be primitive recursive on a copy. 

\begin{theorem} There exists a punctual copy $\mathcal{B}$ of $\mathcal{S}$ such that the ordering and the addition are primitive recursive on $\mathcal{B}$, but whenever $f: \mathbb{N} \to \mathbb{N}$ is a primitive recursive function larger than $x^2$ for all sufficiently large $x$, $f$ is not primitive recursive on $\mathcal{B}$.
\end{theorem}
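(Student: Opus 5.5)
We need a punctual copy $\mathcal B$ of $\Std$ in which the ordering and addition are primitive recursive, but every primitive recursive $f$ eventually exceeding $x^2$ fails to be primitive recursive on $\mathcal B$.

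The plan adapts the Ackermann-based method behind the first theorem (Proposition~\ref{prop2x}). We take a bijection $c=c_{\mathcal B}\colon\N\to\N$ whose sparsity is controlled by a fast-growing, Ackermann-like but primitive-recursively-approximable function $g$, and we let the structure carry $S$, $\le$ and $+$ via $c$.

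\textbf{The copy.} We place the standard numbers into $\N$ in blocks. The natural number $n$ receives a code $c(n)$ that is roughly of size $h(\lfloor\sqrt n\rfloor)$, where $h$ grows like the Ackermann diagonal. Codes are laid out so that, given a code, one can primitive recursively read off $n$ itself, provided one is allowed time polynomial in $h$ of something.

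The key design point is that codes are \emph{lazy names for $n$}. A code $a$ is issued at stage $a$ of a primitive recursive enumeration. In $a$ steps we can compute only a number below $a$ in the "decoding" sense: $c^{-1}(a)$ is bounded by roughly $\sqrt{h^{-1}(a)}^{\,2}$. Meanwhile, the total order and addition are primitive recursive because they only ever need to produce elements of comparable or larger size.

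\textbf{Order and addition.} Write $\mathrm{dec}(a)=c^{-1}(a)$. We arrange that $\mathrm{dec}$ is primitive recursive given a bound, and that $c$ restricted to $[0,m]$ is computable in time about $h(\sqrt m)$.

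\begin{itemize}
\item For $a\le^{\mathcal B}b$, decode both codes and compare.
\item For $a+^{\mathcal B}b$, decode to $m,n$ and output $c(m+n)$. This needs $h(\sqrt{m+n})$ to be primitive recursively bounded in $a$ and $b$. Since $\sqrt{m+n}\le\sqrt m+\sqrt n\le 2\max(\sqrt m,\sqrt n)$, we require $h(2k)\le p(h(k))$ for a fixed primitive recursive $p$.
\end{itemize}

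We therefore choose $h$ to be a primitive recursively "self-similar" Ackermann-type sequence. We pick stages $t_0<t_1<\cdots$ with $t_{k+1}$ huge, and let $h$ be constant on $[2^{j},2^{j+1})$ within one stage. The doubling gap then costs only one primitive recursive step, while squaring $k\mapsto k^2$ on $\sqrt{\cdot}$-scale jumps across $\log$-many doublings. In other words, $h(k^2)\ge A$-many iterates of $p$ on $h(k)$, where $A$ is Ackermann-like.

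\textbf{Failure for $f>x^2$.} Suppose $f\in\PR$ with $f(x)>x^2$ eventually, and suppose $f^{\mathcal B}$ is primitive recursive. Iterating $f$ a fixed number of times is still primitive recursive on $\mathcal B$. Applying $f^{\mathcal B}$ to $c(n)$ yields $c(f(n))$, whose size is about $h(\sqrt{f(n)})\ge h(n)$. As a function of $c(n)\approx h(\sqrt n)$, this forces $h(n)\le q(h(\sqrt n))$ for a fixed primitive recursive $q$. That contradicts the Ackermann gap between the $\sqrt{\cdot}$ scale and the linear scale: a primitive recursive function cannot bound $A$-iterates (Lemma~\ref{ineq}-style domination of primitive recursive functions by Ackermann levels).

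\textbf{Main obstacle.} The hardest step is designing $h$ so that both requirements hold at once:
\begin{itemize}
\item doubling the argument costs a bounded primitive recursive blow-up, which gives addition;
\item squaring costs a non-primitive-recursive blow-up, which kills $f$.
\end{itemize}
We also need $c$ itself punctual, meaning each new code is issued in time linear in its value.

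I would try $h(k)=$ the $\lfloor\log\log k\rfloor$-th level Ackermann value $A(\lfloor \log\log k\rfloor,\cdot)$, suitably staged, so that doubling $k$ rarely changes $\log\log k$ while squaring increments it. The care lies in interleaving stages so that, whenever the stage changes, enough time has elapsed for the enumeration to remain punctual.
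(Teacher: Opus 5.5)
There is a genuine gap. The construction is inconsistent at three points.

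(i) \emph{The sizes run the wrong way.} If $S^{\mathcal B}$ is primitive recursive, then so is $c_{\mathcal B}(n)=(S^{\mathcal B})^n(c_{\mathcal B}(0))$. So no punctual copy can give $n$ a code of Ackermann size $h(\lfloor\sqrt n\rfloor)$, and such a sparse assignment would not even be onto $\N$. In a punctual copy, nonstandardness can only come from $c_{\mathcal B}^{-1}$ failing to be primitive recursive. That means some \emph{small} numbers must carry \emph{huge} standard values, like $free(n)$ in $\A$, whose $\A$-number exceeds $a_n$.

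(ii) \emph{Decoding cannot be primitive recursive.} You compute $\le^{\mathcal B}$ and $+^{\mathcal B}$ by decode, compute, re-encode, with decoding primitive recursive in the code. Then $c_{\mathcal B}^{-1}$ is primitive recursive, and by Observation~\ref{observation} the copy is punctually standard. In that case \emph{every} primitive recursive $f$ is primitive recursive on it, so the required failure for $f>x^2$ is impossible. Order and addition have to be computed without ever recovering standard values.

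(iii) \emph{Your two conditions on $h$ contradict each other.} Suppose $h(2k)\le p(h(k))$ for a fixed primitive recursive $p$, w.l.o.g.\ nondecreasing with $p(y)\ge y$, and $h$ is nondecreasing with $h(k)\ge k$. Then $h(k^2)\le h(2^{\lceil\log_2 k\rceil}k)\le p^{(\lceil\log_2 k\rceil)}(h(k))\le p^{(\lceil\log_2 h(k)\rceil)}(h(k))$. This is a primitive recursive function of $h(k)$, since logarithmically many iterations of a fixed primitive recursive function form a bounded iteration. So squaring cannot cost a non-primitive-recursive blow-up when doubling costs a bounded one. Your candidate built from $\lfloor\log\log k\rfloor$ shows this concretely: squaring raises $\lfloor\log\log k\rfloor$ by exactly one, the same jump doubling produces at the boundaries.

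The missing idea, which the paper uses, is not to build $\mathcal B$ from scratch but to transfer the $2x$-threshold of Proposition~\ref{prop2x} through binary expansions. Put $\widetilde c(\sum_j 2^{i_j})=\sum_j 2^{c(i_j)}$, where $c$ is the isomorphism onto $\A$. Adding $2^{c(j)}$ on $\widetilde{\A}$ is then binary carry propagation with $S^{\A}$ in place of $S$. Hence $+^{\widetilde{\A}}$, and so $S^{\widetilde{\A}}$, is primitive recursive without any decoding. The order comes from $<^{\A}$ (Lemma~\ref{ineq}). Now suppose $f^{\widetilde{\A}}$ were primitive recursive with $f(x)\ge x^2$ eventually. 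Reading off the $<^{\A}$-largest exponent of $f^{\widetilde{\A}}(2^x)$ defines $g^{\A}$ primitive recursively. Its preimage $g$ is primitive recursive and satisfies $g(p)\ge 2p$, because $f(2^p)\ge 2^{2p}$. This contradicts Proposition~\ref{prop2x}.
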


With the same technique, we can push this behavior up to addition and multiplication (see Proposition \ref{main2}):

\begin{theorem}\label{thm addition multiplication} There exists a punctual copy  $\mathcal{C}$ of $\mathcal{S}$ such that the ordering, the addition and the multiplication are primitive recursive on $\mathcal{C}$, but whenever $f: \mathbb{N} \to \mathbb{N}$ is primitive recursive and larger than $x^{\log_2 x}$ for all sufficiently large $x$, $f$ is not primitive recursive on $\mathcal{C}$.
\end{theorem}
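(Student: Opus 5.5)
We construct $\mathcal{C}$ as a punctual copy of $\Std$ in which the standard numbers are laid out in a sparse pattern governed by the Ackermann function. The lever is a growth-rate argument, as in the constructions for $\mathcal{A}$ and $\mathcal{B}$ (Propositions \ref{prop2x} and \ref{main1}).

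First we fix the construction. We enumerate the domain in stages. At each stage $s$ the copy has determined $c_\mathcal{C}(0),\dots,c_\mathcal{C}(n_s)$, where $n_s$ grows very slowly in $s$; concretely, $n$ is allowed only once some clock built from the Ackermann function $A(m,m)$ has run out. All other natural numbers $\le s$ that are already used must be kept as codes of ``large'' elements. We do this by assigning each new code $k$ an explicit standard value, namely a number greater than $2^{(\log_2 n_s)^2}$ stored together with $k$ in a primitive recursive table. Because every element carries its intended value, ordering, addition and multiplication can be computed primitively recursively. To compute $x+^\mathcal{C}y$ or $x\cdot^\mathcal{C}y$ we read off the stored values $a,b$ and form $a+b$ or $ab$. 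If the result already has a code, we return it; otherwise we create the code at the current stage, which is bounded primitively recursively in $x,y$. The point is that closure under $+$ and $\cdot$ of the elements present by stage $s$ only produces values of size about $N^{\mathrm{poly}}$, where $N$ bounds the stored values. Since the required number of new codes is polynomial, they can always be accommodated.

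The negative part runs as follows. Suppose $f$ is primitive recursive, $f(x)>x^{\log_2 x}$ eventually, and $f^\mathcal{C}$ is primitive recursive, bounded by some $A(m,\cdot)$. Starting from the code of a small element, we iterate $f^\mathcal{C}$. After $k$ iterations the standard value exceeds the tower obtained by iterating $x\mapsto x^{\log x}$, that is, a quasi-polynomially growing iteration. This outpaces any closure under $+$ and $\cdot$, which only ever reaches polynomial iterates, so it forces the construction to have enumerated elements of standard size that the slow clock had not yet permitted by stage $A(m,\cdot)^{(k)}(x_0)$. That gives the contradiction, and this is exactly why the threshold $x^{\log_2 x}$ appears. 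Iterated multiplication from the generators available at stage $s$ only yields values bounded by $N_s^{p(s)}$, which is polynomial in size relative to the codes, whereas $f$ escapes every such bound. We would formalize this with a counterpart of Lemma \ref{ineq}: every element with code $\le s$ has standard value either $\le n_s$ or in a ``slot'' lying in the gap between $n_s^{O(\log)}$ and the next admissible level.

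The main obstacle is making the last step rigorous: we must choose the permitted sizes of the large elements so that the arithmetic closure of what is present never exceeds quasi-polynomial growth, while $f$ is forced beyond it. Our first attempt would define the levels by $L_{j+1}=L_j^{\,c\log L_j}$ and keep the admissible stored values inside the closure of $\{0,\dots,L_j\}$ under $+,\cdot$ up to a polynomial degree. We would then check that $x^{\log_2 x}$ eventually jumps strictly between levels faster than the Ackermann clock allows the construction to fill them.
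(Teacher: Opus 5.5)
\textbf{There is a genuine gap: your construction contradicts itself.} You store, for each code $k$, its standard value in a primitive recursive table, and you compute $x+^{\mathcal C}y$ and $x\cdot^{\mathcal C}y$ by reading these values off. But then the map $k\mapsto(\text{stored value of }k)$ is exactly $c_{\mathcal C}^{-1}$, so $c_{\mathcal C}^{-1}$ is primitive recursive. Since $S^{\mathcal C}$ is primitive recursive, so is $c_{\mathcal C}(p)=(S^{\mathcal C})^p(c_{\mathcal C}(0))$. By Observation \ref{observation}, $\mathcal C$ is then punctually standard, so $f^{\mathcal C}=c_{\mathcal C}\circ f\circ c_{\mathcal C}^{-1}$ is primitive recursive for \emph{every} primitive recursive $f$, including those above $x^{\log_2 x}$. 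Your positive half therefore already refutes your negative half, and no refinement of the clock or the levels can repair this. Once every element's value can be read off, the slow enumeration of $c_{\mathcal C}(0),\dots,c_{\mathcal C}(n_s)$ is irrelevant. (The growth bookkeeping is also off: multiplicative closure of depth $d$ reaches $N^{2^d}$, not $N^{\mathrm{poly}}$. This is secondary.)

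The missing idea is to make $+$ and $\times$ primitive recursive \emph{without} access to standard values, so that $c^{-1}$ stays non-primitive-recursive. The paper starts from the Ackermann-based copy $\mathcal A$, where $S^{\mathcal A}$ and $<^{\mathcal A}$ are primitive recursive but $c_{\mathcal A}^{-1}\equiv_E a$. It then applies the binary-exponent transform $\widetilde c(2^{i_k}+\dots+2^{i_0})=2^{c(i_k)}+\dots+2^{c(i_0)}$ twice.
\begin{itemize}
\item Addition in $\widetilde{\mathcal A}$ is binary addition with carries propagated along $S^{\mathcal A}$ (Proposition \ref{propSsum}).
\item Multiplication in $\mathcal C=\widetilde{\widetilde{\mathcal A}}$ shifts exponents using $+^{\widetilde{\mathcal A}}$ (Proposition \ref{propprod}).
\item The ordering transfers from Lemma \ref{ineq}.
\end{itemize}
The negative part is then a chain of reductions, not an iteration argument. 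Suppose $f^{\mathcal C}$ is primitive recursive with $f(x)\ge x^{\log_2 x}$. Summing the binary exponents of $f^{\mathcal C}(2^x)$ gives a primitive recursive $g^{\widetilde{\mathcal A}}$ with $g(p)\ge p^2$ (Proposition \ref{main2}). Taking the $<^{\mathcal A}$-largest exponent of $g^{\widetilde{\mathcal A}}(2^x)$ gives a primitive recursive $h^{\mathcal A}$ with $h(p)\ge 2p$ (Proposition \ref{main1}). This contradicts Proposition \ref{prop2x}: $h^{\mathcal A}(free(n))$ would land in the block before $a_{n+1}$ at an offset $\ge a_n$ that is computable from $n$.
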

The structures $\mathcal{B}$ and $\mathcal{C}$ are obtained from the structure $\mathcal{A}$ by introducing a general method, which manipulates base-$2$ expansions, to produce new punctual copies of $\mathcal{S}$ from already constructed ones. We show that it is not possible to extend this succession of theorems to the level of the exponential function by using the same method (see Proposition \ref{main3}). In contrast, the mainland--island technique is successful for preserving all exponential functions with a constant base (see Contribution 1b).

Theorem \ref{thm addition multiplication} has the following fundamental consequence, which we find surprising, given that  \(S, +, \times,\) and \(<\) form the core primitives in canonical axiomatizations of the natural numbers and constitute the most basic operations of ordinary arithmetic.

\begin{restatable}{corollary}{corArithmetic}\label{corollary_arithmetic}
The standard arithmetic operations of successor \(S\), addition \(+\), multiplication \(\times\), and ordering \(\leq\) do not constitute a basis for punctual standardness.
\end{restatable}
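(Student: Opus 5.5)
The plan is to read the corollary off Theorem~\ref{thm addition multiplication}, using the definition of a basis for punctual standardness (Definition~\ref{def:basis}).

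First, I fix the punctual copy $\mathcal C$ of $\Std$ supplied by Theorem~\ref{thm addition multiplication}. Theorem~\ref{thm addition multiplication} states that ordering, addition and multiplication are primitive recursive on $\mathcal C$, so it remains to handle $S$ and to pass from $<$ to $\leq$. Since $\mathcal C$ is punctual, $S^{\mathcal C}$ is primitive recursive by definition, so $S$ is primitive recursive on $\mathcal C$. Because $x\leq y \iff x<y \vee x=y$ and equality on the domain $\N$ is primitive recursive, $\leq^{\mathcal C}$ is primitive recursive whenever $<^{\mathcal C}$ is. Relations are handled via their characteristic functions, in the sense of the convention in the introduction that extends Definition~\ref{def computable on} to relations. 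Hence every member of $\{S,+,\times,\leq\}$ is primitive recursive on $\mathcal C$.

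Next, I show that $\mathcal C$ is punctually nonstandard. Take a primitive recursive $f$ that eventually exceeds $x^{\log_2 x}$, for instance $f(x)=2^{x}$: for large $x$ we have $x > (\log_2 x)^2$, so $2^x > x^{\log_2 x}$. By Theorem~\ref{thm addition multiplication}, $f$ is not primitive recursive on $\mathcal C$. Thus $f\in\PR\setminus\PR^{\mathcal C}$, so $\PR\neq\PR^{\mathcal C}$.

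Therefore $\mathcal C$ witnesses that $\{S,+,\times,\leq\}$ fails Definition~\ref{def:basis}, which proves the corollary. There is no genuine obstacle at this level, since all the difficulty lies in Theorem~\ref{thm addition multiplication} (Proposition~\ref{main2}). The only point needing care is that the theorem speaks of ``ordering'' while the corollary speaks of $\leq$, and the argument above bridges the strict and non-strict versions.
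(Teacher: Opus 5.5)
Your argument is correct: punctuality of $\mathcal C$ gives $S$, $\leq$ follows from $<$ together with equality, and since $x^{\log_2 x}=2^{(\log_2 x)^2}$, the function $2^x$ lies in $\PR\setminus\PR^{\mathcal C}$. This is exactly the derivation the paper announces in the introduction, where the corollary is presented as a consequence of Theorem~\ref{thm addition multiplication}.

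The proof the paper actually writes out after the restatement in Section~\ref{sec contribution 1b technical} goes differently. It applies Corollary~\ref{corol:non-basis-meta} with $\mathcal Q=\mathcal U=\{S,+,\times\}$ and $\mathcal F$ the non-constant polynomials. The normal forms are prime-power codes of coefficient vectors, with explicit strict domination witnesses. A mainland--island construction then produces a copy on which $S,+,\times,\leq$ are primitive recursive but $\Pred$ is not.

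Your route is immediate once Proposition~\ref{main2} is in hand. It also yields quantitative information: every primitive recursive function eventually above $x^{\log_2 x}$ fails on $\mathcal C$. The price is that it rests on the chain Proposition~\ref{prop2x}, Proposition~\ref{main1}, Proposition~\ref{main2}.

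The metatheorem route is a reusable template that also yields Corollary~\ref{corollary_Levitz}. There every $n^x$ is primitive recursive on the copy, so nonstandardness must be witnessed by a slow function such as $\Pred$ rather than by a fast-growing one like your $2^x$. The binary-expansion iteration behind $\mathcal C$ cannot reach that level, by Proposition~\ref{main3}.
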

In this paper, we shall see three proofs of this result. Apart from Theorem \ref{thm addition multiplication}, it can also be derived by different means using Theorem~1.3 from \cite{KMZ-2024} and \cref{observation}. Yet another proof will be extracted from Theorem~\ref{theo:nonstandardness-meta}.






All the technicalities, including proofs, can be found in Section \ref{sec contribution 1a technical}.

\subsubsection*{Contribution 1b}

The novel techniques of Contribution~1a are very useful for a fine analysis of some particular (fast growing) primitive recursive functions. Nevertheless, in order to apply our methodology to familiar large classes of functions, we use the already established mainland--island method~\cite{kalimullin_algebraic_2017,bazhenov2020online}. Originally introduced in~\cite{kalimullin_algebraic_2017}, this method of construction is used to build a punctual isomorphic copy $\mathcal{A}$ of a given computable structure $\mathcal{M}$. The construction guarantees that primitive recursiveness of some functions $f^{\mathcal{A}}$ (typically successor, potentially also others) is preserved. Meanwhile, another function (either between elements of the domain of $\mathcal{A}$, or an isomorphism from $\mathcal{A}$ onto $\mathcal{M}$) is ensured to be not primitive recursive via diagonalisation.

In this paper, the island technique is showcased by the proof of the following result, which we were not able to prove using the method employed to obtain theorems described in Contribution~1a, except when $P$ is a singleton:
\begin{restatable}{theorem}{thmPrimes}
    \label{ax+b}
    For any finite set of prime numbers $P$, there exists $\mathcal{B}$---a punctual copy of $\mathcal{S}$---such that the only linear functions primitive recursive on $\mathcal{B}$ are $f(x)=ax+b$, where $a$ has no prime divisors outside of $P$.
\end{restatable}


To broaden the applicability of the mainland--island technique, we prove Theorem~\ref{theo:nonstandardness-meta} (see below), which provides sufficient conditions for constructing a punctually nonstandard copy $\mathcal{A}$ of the structure $\mathcal{S} = (\mathbb{N},S)$ via this method. Intuitively, one can view Theorem~\ref{theo:nonstandardness-meta} as a metatheorem: Theorem~\ref{theo:nonstandardness-meta} isolates technical conditions that preserve primitive recursiveness of a large class of functions $\mathcal{F}$ in $\mathcal{A}$, while ensuring that the image of the predecessor function $\Pred^{\mathcal{A}}$ is not classically primitive recursive. These conditions essentially combine the previous applications of the mainland--island method (see, e.g., \cite{kalimullin_algebraic_2017,bazhenov2020online,KMZ-2024,dorzhieva-hammatt}) and the recursion-theoretic properties of the class of functions studied by Levitz~\cite{Levitz}. 

In Theorem~\ref{theo:nonstandardness-meta}, $\mathcal{Q}$ is a uniform family of primitive recursive functions, and $\mathcal{F}$ is a uniform family of unary primitive recursive functions. All other technical conditions are fully explained in Section \ref{sec contribution 1b technical}.

\begin{restatable}{theorem}{theoMeta}\label{theo:nonstandardness-meta}
    If an $L$-family $(\mathcal{Q},\mathcal{F})$ has a set of normal forms, then the family $\mathcal{F}$ is not a basis for punctual standardness.
\end{restatable}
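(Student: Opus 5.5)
We will build, by the mainland--island method, a punctual copy $\mathcal{A}$ of $\Std$ on which every member of $\mathcal{F}$ is primitive recursive but $\Pred^{\mathcal{A}}$ is not. By Observation~\ref{observation} this is enough: if $\mathcal{A}$ were punctually standard, then $\Pred$, being primitive recursive, would be primitive recursive on $\mathcal{A}$. Since the notions of an $L$-family and of a set of normal forms are only defined in Section~\ref{sec contribution 1b technical}, we assume they provide the following. $\mathcal{Q}$ is a uniformly primitive recursive family of terms or functions closed under the operations needed, and it contains the compositions $f_1\circ\dots\circ f_k$ of members of $\mathcal{F}$ applied to (shifted) arguments. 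Each such composite value $g(x+c)$ has a primitive recursively computable normal form $q(x)+c'$ with $q\in\mathcal{Q}$. Finally, two normal forms are equal as functions iff they are syntactically equal, and distinct normal forms eventually separate, at a primitive recursively bounded point, as in Levitz's analysis.

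The construction proceeds in stages. The mainland is a finite initial segment $0,1,\dots,n$ of $\Std$ already identified with numbers of $\mathcal{A}$. An island is a finite chain $a, S^{\mathcal{A}}(a),\dots$ whose position relative to the mainland is not yet decided. We diagonalize against the $e$-th primitive recursive function $p_e$, with a clock on its computation.

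At stage $s$ we grow every island by applying $S^{\mathcal{A}}$, so successor stays punctual. We also evaluate each $f\in\mathcal{F}$, for the finitely many indices allowed at stage $s$, on the current elements. To make $f^{\mathcal{A}}$ primitive recursive, we represent an island element symbolically as ``$X+c$'' with $X$ an unknown standard number. Then $f^{\mathcal{A}}$ applied to it returns a new element labelled by the normal form $q(X)+c'$. Normal forms make this labelling well defined: equal values get one name. We also need that labels never collide incorrectly, and for that we require that $X$ be chosen large enough that distinct normal forms take distinct values at $X$.

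Requirement $R_e$ opens a fresh island element $a$ and waits until $p_e(a)$ converges. If $p_e(a)$ names an element of the island other than the true predecessor, or names a mainland element, we then attach the island far out in the mainland, at a position $X$ beyond everything named so far. This makes $\Pred^{\mathcal{A}}(a)\neq p_e(a)$, and $X$ also lies beyond the separation threshold of all normal forms created. If instead $p_e(a)$ names the island's own predecessor, we start the island at $a$ itself with no predecessor inside it and glue it, again beyond all thresholds, so that the predecessor is a fresh mainland element different from $p_e(a)$. While we wait, the mainland keeps growing, so $c_{\mathcal{A}}$ remains total. When an island is glued, every symbolic label $q(X)+c'$ is evaluated and identified with the corresponding actual element, and this is consistent by the choice of $X$.

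The main obstacle is verifying punctuality of $f^{\mathcal{A}}$ for $f\in\mathcal{F}$ during the waiting period. While we wait, island elements must receive $f$-images immediately, and a symbolic term may later need to be matched with mainland numbers. This is exactly what the normal-form hypothesis is for. I would first establish a lemma: for normal forms $q\neq q'$, we have $q(x)+c\neq q'(x)+c'$ for all $x$ beyond a bound computable primitively recursively from the terms. With that lemma, the glue point $X$ can always be chosen, and equality of labels is decidable primitive recursively; the remaining verification is routine bookkeeping.
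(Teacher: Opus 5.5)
Your overall plan is the paper's: keep a single witness $a$ off the mainland, label every archipelago element by a term in the unknown eventual position $X$ of $a$, use normal forms to decide when two labels denote the same element, and glue beyond primitive recursively computable separation points. Your guessed hypotheses differ from the actual ones, but harmlessly. In Definition~\ref{def:L-fam} it is $\mathcal{F}$ itself that contains $S$ and the identity and is closed under composition, so labels are just terms defining members of $\mathcal{F}$ (your $q(X)+c'$ is $S^{c'}\circ q$). The separation lemma you plan to prove follows at once from linearity of $\preccurlyeq$ on $\mathcal{F}$ together with Condition~2 of Definition~\ref{def:norm-form}.

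The genuine gap is in the diagonalization. Your second case, where $p_e(a)$ ``names the island's own predecessor'' and you then ``start the island at $a$ itself with no predecessor inside it'', is not a legal move. To keep $S^{\mathcal{A}}$ and the $f^{\mathcal{A}}$ punctual, you must declare their values on island elements promptly, long before $p_e(a)$ converges, and such declarations can never be retracted. If an archipelago element could really become $a$'s predecessor, $R_e$ would simply fail.

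Two other claims are also unjustified. In your first case, $\Pred^{\mathcal{A}}(a)\neq p_e(a)$ is not argued when $p_e(a)$ is an island element. You also tacitly assume that gluing at a large $X$ never sends an island label onto a position already occupied by the mainland.

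The missing ingredient for all three points is a hypothesis you never use: every member of $\mathcal{F}$ is strictly increasing (Item~(5) of Definition~\ref{def:L-fam}). Hence every label $q$ satisfies $q(x)\geq x$, so after gluing the witness at $X$ every archipelago element sits at a position $\geq X$. Nothing collides with the mainland below $X$, and no archipelago element can sit at $X-1$. So $a$ is the least archipelago element, and its predecessor is forced to be the mainland element placed at $X-1$ at connection time. Your problematic case therefore never arises, and no case split is needed.

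You also overlook that $p_e(a)$ may be a number not yet in $\mathcal{A}$. If fresh numbers are taken least-first, that number could end up at position $X-1$. The paper rules this out with the clock convention Eq.~(\ref{equ:max}) and the growth $\operatorname{card}(\mathcal{A}_s)\geq s+2$. Together these guarantee that $y=p_e(a)$ is already present and already has a declared successor. That successor differs from $a$: if $y$ is a mainland element, because $a$ is not yet connected; if $y$ is an archipelago element with label $q$, because $S(q(x))>x$. Hence $\Pred^{\mathcal{A}}(a)\neq y$ directly.
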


Theorem~\ref{theo:nonstandardness-meta} yields two important applications. First, it provides an independent justification of Corollary \ref{corollary_arithmetic}. Second, it can be used to derive a much stronger result.
Building on the work of Skolem~\cite{Skolem}, Levitz~\cite{Levitz} considered the following subclass of primitive recursive functions, that contains, in particular, all exponential polynomials.

\begin{restatable}{definition}{defLevitz}\label{def:Levitz}
The \emph{Levitz's class} $\LL$ is the least class of functions $f \colon \Nat \to \Nat$ containing the constant functions $0, 1$ and the identity $x \mapsto x $ and satisfying the following: if $f, g \in \LL$ and $n \in \Nat^+$, then the functions 
    \begin{enumerate}[label=(\alph*)]
        \item $x \mapsto f(x)+g(x)$, 
        \item $x \mapsto f(x)\cdot g(x)$, 
        \item $x \mapsto n^{f(x)}$, and 
        \item $x \mapsto x^{f(x)}$
    \end{enumerate}
    also belong to $\LL$.
    
    If we omit item~(d) from the definition of the Levitz's class, then the resulting class of functions will be denoted by $\LL_c$. Observe that $\LL_c \subset \LL$, and the class $\LL_c$ is closed with respect to composition.
\end{restatable}
\begin{restatable}{corollary}{corLevitz}\label{corollary_Levitz}
    The class $\LL_{c}$ is not a basis for punctual standardness.
\end{restatable}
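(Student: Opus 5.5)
The plan is to derive Corollary~\ref{corollary_Levitz} directly from Theorem~\ref{theo:nonstandardness-meta}. To do so, I will construct an $L$-family $(\mathcal{Q},\mathcal{F})$ with $\mathcal{F}$ a uniform enumeration of $\LL_c$, and then exhibit a set of normal forms for it.

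\textbf{Step 1: enumerate $\LL_c$ uniformly.} Each function in $\LL_c$ is given by a term built from $0$, $1$ and $x$ using $+$, $\cdot$ and $n^{(\cdot)}$ with $n\in\Nat^+$. Terms can be coded primitive recursively, and evaluating a term on an argument is primitive recursive uniformly in the code. So I take $\mathcal{F}=(f_e)_{e}$, where $f_e$ is the function denoted by the $e$-th term. This makes $\mathcal{F}$ a uniform family of unary primitive recursive functions.

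\textbf{Step 2: choose $\mathcal{Q}$.} $\mathcal{Q}$ must be a uniform family of primitive recursive functions meeting the $L$-family conditions of Section~\ref{sec contribution 1b technical}. These conditions presumably require closure under the operations used to "transport" $\mathcal{F}$ across islands, namely composition, shifts $x\mapsto x+c$, and the auxiliary functions the construction needs. Since $\LL_c$ is closed under composition (noted after Definition~\ref{def:Levitz}) and contains $x+c$, I would take $\mathcal{Q}$ to be $\LL_c$ itself, possibly extended to several variables by the same term syntax. The needed closure properties are then checked clause by clause.

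\textbf{Step 3: normal forms.} Following Skolem and Levitz, every term of $\LL_c$ can be rewritten, by distributivity and the laws $n^{a+b}=n^an^b$ and $(n^a)^b$-type identities, into a sum of monomials. Each monomial has the shape $c\cdot x^k\cdot \prod_i n_i^{g_i(x)}$ with the $g_i$ again in normal form. Moreover, these normal forms admit a primitive recursively decidable eventual-dominance comparison (Levitz's ordering results for the exponential-polynomial fragment). I expect this to supply what "a set of normal forms" demands: a primitive recursive normalization map, uniqueness (equal functions have equal normal forms, or at least eventual equality is decidable), and growth bounds usable to keep the islands finite and to keep $f^{\mathcal{A}}$ primitive recursive.

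\textbf{Main obstacle.} The hard step is Step~3, proving that the chosen normal forms satisfy \emph{all} technical requirements with primitive recursive uniformity. The difficulty is confirming that comparing and normalizing nested exponentials $n^{f(x)}$ is effective and primitive recursive. Levitz's analysis is for the full class $\LL$ and is delicate because of $x^{f(x)}$; this is precisely why only $\LL_c$ is claimed. My first attempt would be induction on the exponential height of terms, using uniqueness of the base representation $\prod p^{\alpha_p}$ for constant bases to put exponentials into canonical form. Once that holds, Theorem~\ref{theo:nonstandardness-meta} gives that $\LL_c$ is not a basis for punctual standardness.
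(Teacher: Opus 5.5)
Your overall route is the paper's: apply Theorem~\ref{theo:nonstandardness-meta} to an $L$-family built from $\LL_c$, using Levitz's additive and multiplicative normal forms as the set of normal forms. But the proposal never addresses Condition~2(b) of Definition~\ref{def:norm-form}. That condition says that whenever $t_1\prec t_2$, one must \emph{primitively recursively compute a strict $(t_1,t_2)$-domination witness}. Levitz's Theorems~3.2 and~3.3 are qualitative. A primitive recursive decision procedure for $\preccurlyeq$ on codes tells you that a threshold exists but not where it is, and no finite check of values certifies one.

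The metatheorem depends on exactly these thresholds. They define $\beta(s)$ and hence $\Theta$. \texttt{Connect} places each archipelago element $a$ at position $q_a(D'+1)$. That placement is injective and agrees with the already declared order $<^{\mathcal A}$ only because $D'\ge\Theta$. Your ``growth bounds usable to keep the islands finite'' does not identify this requirement, and it is where most of the paper's proof goes.

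For additive primes $f\prec g$, the paper defines two witnesses by simultaneous recursion on the codes of $MNF(f),MNF(g)$: a strict witness $\omega(f,g)$, and a witness $\nu(f,g)$ for $f\cdot x\preccurlyeq g$. The recursion follows the cases of Levitz's proof:
\begin{itemize}
\item different leading exponents, via Lemma~\ref{prconv};
\item equal leading exponents and $u_1\prec v_1$, via $\epsilon=\frac12\bigl(1-\frac{\log u_1}{\log(u_1+1)}\bigr)$ and Lemma~\ref{preps};
\item equal leading factors, by recursion on the tails.
\end{itemize}
It then passes to arbitrary members of $\LL$ through the first differing summand of their ANFs. The missing idea is this explicit multiplicative gap $\nu$. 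Because $p_i\cdot x\le q_i$ beyond a computable threshold, the at most $k$ summands of $f$ from position $i$ on, each eventually at most $p_i$, cannot catch up with $q_i$.

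There are also smaller errors.
\begin{itemize}
\item You cannot take $\mathcal F$ to be all of $\LL_c$. Item~(5) of Definition~\ref{def:L-fam} requires strictly increasing functions. The metatheorem uses this to get $w\le^{\mathcal A}z$ for every archipelago element $z$, and to get $q_a(D'+1)\ge D'+1$ in \texttt{Connect}. The paper takes $\mathcal F$ to be the non-constant members of $\LL_c$, which are all strictly increasing, with $\mathcal Q=\{S,+,\times\}\cup\{n^x : n\in\Nat^+\}$. It recovers $\LL_c$ at the end for free, since constant functions are primitive recursive on every copy.
\item Your reason for restricting to $\LL_c$ is misplaced. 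The paper carries out normalization, comparison and witnesses for all of $\LL$ (via $\widehat T$). What fails for $\LL$ is closure under composition, Item~(3).
\item Canonicalizing constant bases by prime factorization would clash with the lexicographic criterion Eq.~(\ref{CompareMNF}) you intend to borrow. Levitz's MNF instead merges numeric bases with equal exponents ($n^{g}m^{g}=(nm)^{g}$). Writing $6^x$ as $2^x3^x$ would make that criterion declare $6^x\prec 5^x$.
\end{itemize}
For primitive recursiveness of the normalization itself, the paper uses course-of-values recursion on codes. This works because set-ANF summands have codes $\le\code{f}$ and set-MNF exponents have codes $<\code{f}$ (Lemmas~\ref{setnormal}--\ref{addnormal}).
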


Altogether, the results discussed so far show that finding a basis for punctual standardness is not straightforward. The most natural candidates---the arithmetic operations featured in the classical axiomatic descriptions of the natural numbers---fail to provide such a basis. Similarly, extending the potential basis to some natural fast growing functions comprising $\LL_c$ does not solve the problem either.

While we do not prove it in this paper, we remark that it is possible to directly construct a punctually nonstandard copy of $(\mathbb N, S, \Pred, +, \times, <)$. This alleviates the potential concern that punctual nonstandardness could arise from attempts to preserve only some non-decreasing functions. Furthermore, it seems plausible that $\LL_c \cup \{\Pred\}$ is not a basis for punctual standardness either.


All the technicalities, including proofs, can be found in Section \ref{sec contribution 1b technical}.

\subsubsection*{Contribution 2}

To obtain natural bases for punctual standardness, we begin with a complexity-theoretic analysis of the construction by Kalimullin, Melnikov, and Ng (\cite{kalimullin_algebraic_2017}, Proposition~4.2), which produces a rigid (that is, having only one automorphism) punctually categorical structure $\mathcal B = (\mathbb N, s,c,o)$.
 The structure $\mathcal B$ employs an artificial vocabulary with equally artificial interpretations, designed specifically to satisfy the intended requirements. For details, we refer the reader to the original paper; here, we only mention that $o$ is a constant, that $o, s(0), ss(o), \dots$ is an $\omega$-chain, and that $c$ is a unary function partitioning the domain into finite, pairwise disjoint cycles. Our analysis establishes that $\mathcal B$ is elementary in the sense of Kalm\'ar \cite{kalmar} (equivalently, $\mathcal E^3$ in the Grzegorczyk hierarchy). It follows that $(\mathbb N, S, s,c,o)$ is an elementary, rigid punctually categorical structure.
 
 The next step is to leverage existing results concerning substitution bases for the class of elementary functions. A substitution basis for a class $\mathcal F$ of functions (in this case, the elementary functions) is a finite collection of functions from $\mathcal F$ that generates the entire class through applications of the substitution operator together with projections and constants. The following is an example of such a basis: $x + y$, $x \!\mod y$, $x^2$, $2^x$ (see, e.g., Corollary 4.8 in \cite{mazzanti_plain_2002}). We prove that 
 
 \begin{restatable}{theorem}{elementarybasis}
 \label{thm:elementary}
     Every substitution basis for the class of elementary functions is a basis for punctual standardness.
 \end{restatable}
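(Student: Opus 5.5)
Let $\mathcal F$ be a substitution basis for the elementary functions, and let $\mathcal A$ be a copy of $\Std$ on which every member of $\mathcal F$ is primitive recursive. We will show that $c_\A^{-1}$ is primitive recursive and then invoke Observation~\ref{observation}.

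\emph{Step 1: $\mathcal A$ is punctual and every elementary function is primitive recursive on $\A$.} The map $f\mapsto f^{\A}=c\circ f\circ c^{-1}$ (with $c=c_\A$) commutes with substitution. Projections are sent to projections. A constant $k$ is sent to the constant $c(k)$, which is trivially primitive recursive. Since $\PR$ is closed under substitution, the class of functions primitive recursive on $\A$ is closed under substitution. It contains $\mathcal F$, projections and constants, so it contains every elementary function. In particular $S^{\A}$ is primitive recursive, so $\A$ is a punctual copy. So is the expanded structure $\A^*=(\N,S^{\A},s^{\A},c^{\A},o^{\A})$, where $(\N,S,s,c,o)$ is the elementary rigid punctually categorical expansion of $\Std$ obtained from the Kalimullin--Melnikov--Ng structure $\mathcal B$ (described just before the theorem). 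Its operations $S,s,c$ are elementary, and the constant $o$ is elementary as well.

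\emph{Step 2: apply punctual categoricity.} $\A^*$ is a punctual copy of $(\N,S,s,c,o)$ via $c_\A$. Hence there is a punctual isomorphism $g\colon (\N,S,s,c,o)\to \A^*$. By rigidity the isomorphism is unique, so $g=c_\A$. Therefore $c_\A^{-1}=g^{-1}$ is primitive recursive, so $\A$ is punctually isomorphic to $\Std$, and by Observation~\ref{observation} it is punctually standard.

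\emph{Main obstacle.} The substantive content lies in the ingredient used in Step 1: that the Kalimullin--Melnikov--Ng structure $\mathcal B$ can be placed on $\N$ together with the standard successor so that $s,c,o$ are elementary, while rigidity and punctual categoricity survive the expansion by $S$. Expanding a punctually categorical structure by operations does not obviously preserve punctual categoricity. The idea is to verify this by checking that the back-and-forth procedure from \cite{kalimullin_algebraic_2017} produces an isomorphism whose search bounds depend only on the $\mathcal B$-reduct, while $S$ is merely carried along. Under this reading, Theorem~\ref{thm:elementary} reduces to the elementarity analysis announced in Contribution~2 together with the closure argument above. The only delicate point in the closure argument is that the constants $c(k)$ are harmless: each is a single fixed number, even though $c$ itself need not be primitive recursive.
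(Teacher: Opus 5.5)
Your argument is correct and is essentially the paper's. Closure under substitution makes the Kalimullin--Melnikov--Ng operations $s_K, c_K$ primitive recursive on $\mathcal{A}$, and punctual categoricity plus rigidity then force $c_{\mathcal{A}}$ to be a punctual isomorphism, so Observation~\ref{observation} applies; the one difference is that the paper never adds $S$ (Theorem~\ref{thm_basis} works with $(\mathbb{N},0,s_K,c_K)$ alone), so your \emph{main obstacle} does not arise, and even with $S$ it is immediate without re-examining the back-and-forth construction, since for a rigid punctually categorical structure the unique isomorphism between the reducts of two punctual copies of any expansion must be the isomorphism of the expansions, leaving the elementarity of $f$, $s_K$, $c_K$ (Section~\ref{sec contribution 2 technical}) as the real work.
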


By Observation \ref{observation}, this result provides \emph{natural} examples of finitely generated punctually categorical structures. Previously such structures have been known either through the artificial finitely generated structure $\mathcal B = (\mathbb N, s,c,o)$ discussed above or through structures which are not finitely generated, such as the complete graph or the abelian \(p\)-group \(\bigoplus_{i\in\omega}\mathbb{Z}_p\).

Our result also provides, in a manner of speaking, a syntactic characterization of punctual standardness. Indeed, we describe this notion in a purely intrinsic manner by setting requirements on what functions are assumed to be primitive recursive, and without invoking any similarity criterion with a fixed standard copy.

Theorem \ref{thm:elementary} answers the question raised by Grabmayr (see, \cite{grabmayr_structure_2025}, p. 10) who requested to isolate complexity constraints on punctual number systems, sufficient to determine a unique punctual-isomorphism type. In the context of Grabmayr's paper, complexity constraints are understood as follows: which operations apart from the successor need to be primitive recursive in a number system (that is, in a punctual copy of $\Std$, in our terminology) to make the system punctually categorical? Here, additional complexity constraints are formalized by the condition ``members of $\mathcal F$ are primitive recursive on $\mathcal A$'' (see, Definition~\ref{def:basis}). It is worth noting that, given a substitution basis for $\mathcal E^3$,
 Theorem \ref{thm:elementary} not only isolates a unique punctual isomorphism-type---it isolates the \emph{right} punctual isomorphism type, that is: all punctual copies of such systems are punctually standard.

The technical part of Contribution 2 can be found in Section \ref{sec contribution 2 technical}.

\subsection*{Conclusions and Open Questions}
In this paper we addressed the following problem, recently (and independently)
raised by Grabmayr~\cite{grabmayr_structure_2025}: which operations must be
required to be primitive recursive on a copy $\mathcal A$ of $(\mathbb N,S)$ in
order to guarantee that the functions primitive recursive on $\mathcal A$
coincide exactly with the standard primitive recursive functions? By Observation \ref{observation}, this is equivalent to asking for a characterization of the punctual isomorphism type of $(\mathbb N,S)$. We have called
such sets of operations \emph{bases for punctual standardness} (Definition
\ref{def:basis}).

We established several negative results. In particular, the standard arithmetic operations $S,+,\times,<$, which feature prominently in canonical axiomatic characterizations of the natural numbers, do not constitute a basis for punctual standardness. At this point, the task of identifying such bases becomes non-trivial because few functions are at face value more natural than $S,+,\times,<$. Moreover, the range of non-basis results presented in this article shows that certain relatively natural and well-studied classes of primitive recursive functions fail to form such bases. To address the problem, an indirect strategy proved succesful: we took a categoricity result from punctual structure theory, analyzed the complexity of the underlying construction, and subsequently made use of existing results on substitution bases for the class of elementary functions.

Theorem \ref{thm:elementary} provides a natural structural characterization of
punctual standardness. For example, using one of the bases introduced by Mazzanti
\cite{mazzanti_plain_2002}, a punctual copy $\mathcal A$ of $(\mathbb N,S)$ is
punctually standard if and only if the functions $x+y$, $x \bmod y$, $x^2$, and
$2^x$ are primitive recursive on $\mathcal A$. Our result can be also framed as follows. Let $\mathcal F$ be a substitution basis for the class of elementary functions. Then the following equivalence holds: for every isomorphic presentation $\mathcal A = (\mathbb N, \mathcal F^\mathcal{A})$ of $(\mathbb N, \mathcal F)$, $\mathcal A$ is primitive recursive if and only if $\mathcal A$ is punctually standard. This shows that
the notion of punctual standardness is both natural and conceptually elegant.

Theorem \ref{thm:elementary} also has a fundamental further consequence: if all elementary functions are primitive recursive on $\mathcal A$, then the structure $\mathcal A$ remains punctually standard. Equivalently, any failure of punctual standardness must already be witnessed at the level of elementary functions. More strongly, at least one of the functions $x+y$, $x \bmod y$, $x^2$, or $2^x$ must fail to be primitive recursive on $\mathcal A$. This reinforces the view that punctually nonstandard copies of the natural numbers with successor are highly unnatural objects.

Finally, it is instructive to juxtapose this picture with our negative results, in particular Corollary \ref{corollary_Levitz}. This corollary shows that a large and natural class of functions, $\mathcal L_c$, is insufficient to guarantee punctual standardness. Although $\mathcal L_c$ is contained within the class of elementary functions, it omits certain key elementary operations whose absence allows other elementary functions to become prohibitively complex. This highlights, once again, the delicate balance required for a set of operations to serve as a basis for punctual standardness.

Several open questions arise.


While negative results have a limit, in many cases it is not clear where this limit lies. In particular, we do not know how far Theorem~\ref{theo:nonstandardness-meta} can be pushed further and whether the whole Levitz's class constitutes a basis for punctual standardness. We conjecture that it does not. Other large classes of primitive recursive functions are natural directions for further investigation. Examples are those considered by Barra and Gerhardy in \cite{Barra-Gerhardy}, defined using iterated exponentiation.  

We also conjecture that already the second Grzegorczyk class $\mathcal{E}^2$ is a basis for punctual standardness. This would require a substantial modification of the construction in \cite{kalimullin_algebraic_2017} of a~punctually categorical rigid structure.



\section{Preliminaries}\label{sec prelim}

We refer to the monograph~\cite{odifreddi_volume_2} for the detailed background on primitive recursion and the Grzegorczyk hierarchy. The reader may also consult a textbook on subrecursive hierarchies~\cite{Rose1984-ROSSFA-2}.

Let $f$ and $g$ be total functions.
We denote $f \leq_{PR} g$ ($f \leq_E g$) if $f$ can be obtained from $g$ and the initial elementary functions using composition and (bounded) primitive recursion.

As usual,
$f \equiv_E g\ \Leftrightarrow\ f \leq_E g \;\&\; g \leq_E f$ and $f \equiv_{PR} g \ \Leftrightarrow\ f \leq_{PR} g \;\&\; g \leq_{PR} f$.

A function $f$ is primitive recursive (elementary), if $f \leq_{PR} 0$ ($f \leq_E 0$).

A relation $R$ is primitive recursive (elementary), if its characteristic function is primitive recursive (elementary).

We assume a fixed (surjective) coding of pairs: $\pair{x,y}$ is the code of the pair $(x,y)$,
where we denote $x = (\pair{x,y})_0$ and $y = (\pair{x,y})_1$. As usual, the functions $(x,y) \mapsto \pair{x,y}, \; z \mapsto (z)_0, \; z \mapsto (z)_1$ are chosen elementary. Moreover, $\pair{x,y}$ is strictly increasing in both $x, y$, which implies that $(z)_0 < z$ and $(z)_1 < z$ for all non-zero $z$.

Let $\mathcal{T}_1$ be the elementary Kleene's predicate, so that any partial computable unary function has the form $\lambda x.(\mu s\mathcal{T}_1(e,x,s))_0$. We can use the Ackermann function $A(x,y)$ to define a computable enumeration of the class of all unary primitive recursive functions:
  $$ \psi_e(x) \;=\; (\mu s\leq A((e)_1,x)\; \mathcal{T}_1((e)_0,x,s))_0. $$
 We also define 
  $$ \psi_{e,s}(x)\downarrow \;\;\Longleftrightarrow\;\; \mathcal{T}_1((e)_0,x,s), $$
 which signifies that the computation of $\psi_e(x)$ takes $\leq s$ steps. We will frequently use $\psi_{e,s}(x)\!\uparrow$ for the negation of $\psi_{e,s}(x)\!\downarrow$. We may assume the following property: 
        \begin{gather}\label{equ:max}
         \psi_{e,s}(x)\!\downarrow\ \ \Longrightarrow\ s> \max(e,x,\psi_e(x)).
        \end{gather}

Let $a : \Nat \to \Nat$ be a sequence with the following properties:
\begin{enumerate}[label=\arabic*.]
  \item $a$ is strictly increasing; \label{p1}
  \item the relation $a_n = k$ is elementary; \label{p2}
  \item for any unary primitive recursive function $f$, $f(a_n) < a_{n+1}$ for all sufficiently large $n$. \label{p3}
\end{enumerate}
It is well-known that $a_n = A(n,n)$ satisfies these three properties.

The following observations are immediate:
\begin{enumerate}[label=(\roman*)]
 \item Property \ref{p3} implies that for any unary primitive recursive function $f$, $f(n) < a_n$ for all sufficiently large $n$, thus $a$ is not primitive recursive.

 \textit{Proof.} Take $g(n) = \displaystyle{\max_{k \leq n+1} f(k)}$. Then $g$ is primitive recursive and $$f(n+1) \leq g(n) \leq g(a_n) < a_{n+1}$$ for all but finitely many $n$.

 \item Properties \ref{p1} and \ref{p2} imply that there exists an elementary function
 \begin{equation}\label{leminv}
   a^{-1} : \Nat \to \Nat\;\;\;\;\text{with}\;\;\;\;a^{-1}(a_n) = n. \tag{$*$}
 \end{equation}
\textit{Proof.} Given input $k$, search for $n \leq k$, such that $a_n = k$. If successful, give output $a^{-1}(k) = n$. Otherwise, give output $0$.
 \item As a unary relation $Ran(a)$ is elementary: $$k \in Ran(a) \;\Longleftrightarrow\; \exists n\leq k\; a_n = k.$$
 \item The unique strictly increasing unary function $h$ with $ran(h) = \Nat\setminus Ran(a)$ is elementary.
 \textit{Proof.} Property \ref{p3} implies that for all sufficiently large $x$, $\{x+1,x+2\} \cap (\Nat\setminus Ran(a)) \neq \emptyset$ ($Ran(a)$ is a very sparse set). Therefore $h(x) \leq 3x$ for all but finitely many $x$.
\end{enumerate}

\section{Examples for Punctually Nonstandard Structures}\label{sec contribution 1a technical}
  We provide a new idea to build a punctual copy $\A = (\Nat, S^\A)$ of $(\Nat, S)$, such that the predecessor function is not primitive recursive on $\A$. Unlike the constructions in \cite{kalimullin_algebraic_2017} and \cite{bazhenov_foundations_2019}, this idea allows for a much finer analysis of the primitive recursive functions and relations on the constructed copy $\A$. 
  
  We start the definition of $\A$ by $S^\A(a_n) = h(\pair{n,0})$. Since $a$ is not primitive recursive, we cannot compute $a_n$ from $h(\pair{n,0})$ by a primitive recursive algorithm, but (\ref{leminv}) implies that given $a_n$ we can elementarily compute $h(\pair{n,0}) = h(\pair{a^{-1}(a_n),0})$. Now we take care to connect all these elements in an elementary manner. Here is an illustration of the idea:
  $$ a_n \longrightarrow h(\pair{n,0}) \longrightarrow h(\pair{n,1}) \longrightarrow \ldots \longrightarrow h(\pair{n,a_{n+1}}) \longrightarrow a_{n+1}. $$
  The natural numbers that remain unused are those in the set $$F = \{h(\pair{n,i})\;|\; i > a_{n+1}\}.$$
  Clearly, $F$ is elementary, because $h$ and the graph of $a$ are elementary. The unique strictly increasing unary function $free$ with $Ran(free) = F$ is primitive recursive: if we have computed $h(\pair{n,i}) \in F$, then the value of the next element in $F$ is at most $h(\pair{n,i+1})$.

  To complete our model, we must insert the elements of $F$ in the above picture (every natural number must appear exactly once).
  We can do that in the following way: 
   $$ a_n \longrightarrow free(n) \longrightarrow h(\pair{n,0}).$$

  More formally, we define $S^\A$ by the following primitive recursive algorithm.
  
  Given input $x$:

  1. If there exists $n \leq x$, such that $a_n = x$, $S^\A(x) = free(n)$.
  
  2. If there exists $n \leq x$, such that $free(n) = x$, $S^\A(x) = h(\pair{n,0})$.

  3. Otherwise, there exists $k \leq x$, such that $h(k) = x$. We compute $n, i$ with $k = \pair{n,i}$.
  If $i = a_{n+1}$, then $S^\A(x) = i$, else $S^\A(x) = h(\pair{n,i+1})$.

  Note that in Clause 3, we have $i \leq a_{n+1}$, because the case $i > a_{n+1}$ is covered by Clause 2.\newline

  Let $c$ be the unique isomorphism from $(\Nat,S)$ to $(\Nat,S^\A)$.
  
  For any $x\in\Nat$, let us call \emph{the $\A$-number of} $x$ the unique $p\in\Nat$, such that $x = c(p)$, so that the $\A$-number of $x$ is $c^{-1}(x)$.

  An easy induction on $n$ shows that in the above model $\A$:
  
  \qquad the $\A$-number of $a_n$ is $3n + \sum_{k=1}^n a_k$,
  
  \qquad the $\A$-number of $free(n)$ is $3n+1+\sum_{k=1}^n a_k$ and
  
  \qquad the $\A$-number of $h(\pair{n,i})$ is $3n + i + 2 + \sum_{k=1}^n a_k$ for $i \leq a_{n+1}$.

  On one hand, the isomorphism $c$ is primitive recursive, because $S^\A$ is primitive recursive and $c(p) = (S^\A)^p(a_0)$. On the other hand, $c^{-1}$ is not primitive recursive, because $c^{-1}(free(n)) > a_n$ for all $n$. In fact, it is easy to prove that $c^{-1} \equiv_E a$.

  Given $p$ we can primitive recursively compute the element $x = c(p)$ with $\A$-number $p$, but given $x$ we cannot primitive recursively compute the $\A$-number $p$ of $x$.

  Also note that for a function $f : \Nat \to \Nat$, the $\A$-number of $f^\A(x)$ is $f(p)$, where $p$ is the $\A$-number of $x$.

\subsection{Exploring Images of Functions in the Model $\A$}

  We explore the complexity of images of functions in the constructed model $\A$. For brevity, we will omit $\A$ and use ``number'' instead of ``$\A$-number'', and similarly $c$ instead of $c_\A$ for the notation for the isomorphism.
  
  Our first example is the image $\Pred^\A$ of the predecessor function, which is not primitive recursive. Otherwise, $\Pred^\A(free(n)) = a_n$ would be primitive recursive.

  It turns out that many other primitive recursive functions $f$ are not primitive recursive in the constructed model $\A$.

  \begin{proposition}\label{prop2x} Let $f : \Nat \to \Nat$ be a primitive recursive function, such that $f(x) \geq 2x$ for all sufficiently large $x$. Then $f^\A$ is not primitive recursive.
  \end{proposition}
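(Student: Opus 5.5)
The plan is to argue by contradiction. Assuming $f^\A$ is primitive recursive, we build a primitive recursive function $G$ with $G(n) > a_n$ for almost all $n$. This contradicts observation (i) of the Preliminaries, which says every unary primitive recursive function is eventually below $a_n$. The witness elements are the first elements $h(\pair{n,0})$ of the $n$-th block. They are elementarily computable from $n$ alone, and we will show that $f^\A$ maps them deep inside the same block. From the position inside the block we then read off a quantity exceeding $a_n$.

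\textbf{Setup.} Write $N_n = 3n + \sum_{k=1}^n a_k$ for the $\A$-number of $a_n$. By the computed $\A$-numbers, the $n$-th block $h(\pair{n,i})$ with $0\le i\le a_{n+1}$ consists exactly of the elements with $\A$-numbers $N_n+2,\dots,N_n+a_{n+1}+2$. Moreover $N_{n+1} = N_n + a_{n+1} + 3$. Put $x_n = h(\pair{n,0})$, so $x_n$ has $\A$-number $p_n = N_n + 2$. The element $f^\A(x_n)$ then has $\A$-number $f(p_n)$.

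\textbf{Key step: $f(p_n)$ stays in block $n$.} The lower end is immediate: for large $n$, $f(p_n) \ge 2p_n = 2N_n+4 \ge N_n+2$. For the upper end, bound $p_n$ crudely by $p_n \le (n+1)(a_n+3) \le (a_n+1)^2$ for large $n$, using monotonicity of $a$ and $a_n \ge n$. Let $g(y) = \max_{z \le (y+1)^2} f(z)$, which is primitive recursive. By Property~\ref{p3}, for large $n$ we get $f(p_n) \le g(a_n) < a_{n+1} \le N_n + a_{n+1} + 2$. Hence $f^\A(x_n) = h(\pair{n, i_n})$ with $i_n = f(p_n) - N_n - 2$. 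This is the step I expect to need the most care: verifying that the crude bound on $p_n$ really lets Property~\ref{p3} apply via the auxiliary $g$, for all sufficiently large $n$.

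\textbf{Conclusion.} Define $G(n) = \bigl(h^{-1}(f^\A(h(\pair{n,0})))\bigr)_1$. Here $h^{-1}(y)$ is computed by bounded search for $k \le y$ with $h(k) = y$, returning $0$ if none exists; this is elementary because $h$ is elementary and strictly increasing. If $f^\A$ were primitive recursive, then $G$ would be primitive recursive. By the key step, for large $n$,
\[
G(n) = i_n = f(p_n) - N_n - 2 \ge N_n + 2 > a_n .
\]
This contradicts observation (i), so $f^\A$ is not primitive recursive.
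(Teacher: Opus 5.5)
Your proof is correct and is essentially the paper's argument. The paper uses the witness $free(n)$ (with $\A$-number $N_n+1$) instead of $h(\pair{n,0})$, shows $f^\A(free(n))=h(\pair{n,i})$ with $i\geq a_n$ via Property~\ref{p3}, and contradicts observation (i); it leaves implicit the domination step that you spell out with the auxiliary $g$. One small slip: $a_n\geq n$ alone does not give $(n+1)(a_n+3)\leq (a_n+1)^2$. You need something like $a_n\geq n+3$, which holds for large $n$ by observation (i), so this is harmless.
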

  \begin{proof} Suppose $f^\A$ is primitive recursive. Then $f^\A(free(n))$ is primitive recursive in $n$.
    The number of $f^\A(free(n))$ is $f(p)$, where $p$ is the number of $free(n)$. So the number of $f^\A(free(n))$ is $f(3n+1+\sum_{k=1}^n a_k)$. Property \ref{p3} implies that for all sufficiently large $n$ we have
    $$ f\left(3n+1+\sum_{k=1}^n a_k\right) \;<\; a_{n+1}, $$
    so the number of $f^\A(free(n))$ is strictly between the numbers of $free(n)$ and $a_{n+1}$ and by construction $f^\A(free(n)) = h(\pair{n,i})$ for some $i \leq a_{n+1}$. But the number of $h(\pair{n,i})$ is $3n + i + 2 + \sum_{k=1}^n a_k$, therefore:
    $$ 3n + i + 2 + \sum_{k=1}^n a_k \;=\; f\left(3n+1+\sum_{k=1}^n a_k\right) \;\geq\; 2\left(3n+1+\sum_{k=1}^n a_k\right), $$
    which clearly implies the inequality $i \geq a_n$ for all sufficiently large $n$. But this is a contradiction, since $i$ can be computed primitive recursively from $n$ using the equality $f^\A(free(n)) = h(\pair{n,i})$.
  \end{proof}
  As a corollary, the only non-constant unary polynomials $p$, such that $p^\A$ is primitive recursive are those of the form $p(x) = x+b$ (for such $p$, $p^\A(x) = (S^\A)^b(x)$).\newline

  It seems that few primitive recursive functions $f$ remain primitive recursive on $\A$.
  But whenever the isomorphism $c$ is primitive recursive, the function $f \circ c$ remains primitive recursive on $\A$ for any primitive recursive $f$, because $(f \circ c)^\A = c \circ f \circ c \circ c^{-1} = c \circ f$.
  So in some sense, among the functions which remain primitive recursive on $\A$, an entire copy of all primitive recursive functions resides. But in another sense, these functions are quite unnatural (compared to polynomials, for example), because their definition crucially depends on the isomorpishm $c$.

  We have seen examples for primitive recursive $f$ in which $f^\A$ remains primitive recursive or becomes not primitive recursive. In the next proposition we will see a family of examples, in which $f$ is not primitive recursive, but $f^\A$ becomes primitive recursive.

  \begin{proposition}
  For any primitive recursive $v : \Nat \to \Nat$, such that $v(x) > x$, there exists $u : \Nat \to \Nat$
  such that $u \equiv_E v$, $u = f^\A$, and $f$ is not primitive recursive.
  \end{proposition}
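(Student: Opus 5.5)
The plan is to choose $u$ so that it sends every element to a point of $\A$ whose $\A$-number is huge, at least of order $a_n$, while $u$ itself stays elementarily equivalent to $v$. The natural candidates are the elements $h(\pair{n,0})$. By the easy induction recorded above, the $\A$-number of $h(\pair{n,0})$ is $3n+2+\sum_{k=1}^n a_k \geq a_n$. Accordingly, I would define
$$u(x) \;=\; h(\pair{v(x),0}), \qquad f \;=\; c^{-1}\circ u\circ c .$$
Then $f^\A = c\circ f\circ c^{-1} = u$ holds automatically, since $c$ is a bijection.

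First I will check that $u \equiv_E v$. The direction $u \leq_E v$ is immediate, because $h$ and the pairing function are elementary. For $v \leq_E u$, I will use that $h$ is strictly increasing, so $h(y)\geq y$ and $h^{-1}$ on $Ran(h)$ is computable by the bounded search $\mu y\leq z\,[h(y)=z]$, which is elementary. Hence $v(x) = (h^{-1}(u(x)))_0$ is elementary in $u$.

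Next I will show that $f$ is not primitive recursive. For every $p$,
$$f(p) \;=\; c^{-1}\bigl(h(\pair{v(c(p)),0})\bigr) \;\geq\; a_{v(c(p))} \;\geq\; a_{c(p)},$$
where the last step uses $v(x)>x$ and the monotonicity of $a$ (property \ref{p1}). Suppose $f$ were primitive recursive. Then $g(n)=\max_{p\leq n} f(p)$ would also be primitive recursive. Since $c$ is injective, among $c(0),\dots,c(n)$ some value is $\geq n$; this is the same trick as in the proof of Observation~\ref{observation}. Therefore $g(n)\geq a_n$ for all $n$, which contradicts observation (i) of the preliminaries: every primitive recursive function is eventually below $a_n$.

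I expect no serious obstacle. The only step needing care is the choice of target elements: they must be reachable from $v(x)$ by an elementary map with an elementary inverse, and at the same time have $\A$-number at least $a_n$. The elements $h(\pair{n,0})$ meet both requirements. By contrast, $free(n)$ is not obviously elementary, and $a_n$ is not primitive recursive, so neither would serve.
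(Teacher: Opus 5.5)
Your proof is correct, but it takes a different route from the paper's.

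\textbf{The paper's route.} It lifts $v$ to the second coordinate of the blocks: $u(h(\pair{m,i})) = h(\pair{m,v(i)})$, with $u=0$ on $Ran(a)$. This $u$ maps $F$ into itself and sends $free(s)$ to some $free(t)$ with $t\ge s+1$. Assuming $f$ is primitive recursive, the paper computes $a_{s+1}$ from $a_s$. It forms $y=c^{-1}(free(s))=3s+1+\sum_{k=1}^{s}a_k$ using the graph of $a$, then reads off $a_{s+1}$ from the bound $f(y)\ge a_{s+1}$. This contradicts Property~\ref{p3} directly.

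\textbf{Your route.} You send every point to the head $h(\pair{v(x),0})$ of block $v(x)$, whose $\A$-number already exceeds $a_{v(x)}$. Hence $f\ge a\circ c$ pointwise. The running-maximum trick from the proof of Observation~\ref{observation} turns this into a primitive recursive majorant of $a$, contradicting observation~(i).

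\textbf{Comparison.} Your argument is shorter. It needs only a lower bound on the $\A$-numbers of the elements $h(\pair{n,0})$. That bound needs $n\ge 1$, which holds here because $v(c(p))>c(p)\ge 0$. You do not have to track how $u$ acts on $F$ or compute $\A$-numbers from $a_s$. The paper's choice of $u$ is instead a natural lift of $v$. It illustrates the phenomenon the proposition is meant to exhibit: jumps inside $F$ are easy in the standard model but impossible in $\A$, because $F$ is scattered across the blocks. Your $u$ is essentially a coding device that ignores where $x$ sits.
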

  \begin{proof} The idea of the proof is that in the standard model it is easy to jump primitively recursively from an element of $F$ to the next element of $F$, but in the model $\A$ the elements of $F$ become scattered and these jumps are no longer possible.

    Given a primitive recursive $v$ with $v(x) > x$, let us define $u$ in the following way:
    $$ u(x) = \begin{cases}
                 h(\pair{m, v(i)}), & \text{ if } x = h(\pair{m,i}),\\
                 0, & \text{ otherwise}.
              \end{cases} $$
    Since $h$ is elementary, we have $u \leq_E v$ and also $v \leq_E u$, because $v(i)$ can be computed from $u(h(\pair{0,i})) = h(\pair{0,v(i)})$. Of course, $u$ is primitive recursive and let us take $f = c^{-1} \circ u \circ c$,
    so that $f^\A = u$. Assume that $f$ is primitive recursive. We will show a primitive recursive algorithm to compute $a_{s+1}$ from $a_s$, which is impossible by Property \ref{p3}.
    
    Given input $a_s$:
    
    1. Compute $y$, such that $c(y) = free(s)$.

    2. Compute $f(y)$ and give output $a_{s+1}$.

    We can compute $y$ in the Step 1, since $y = 3s+1+\sum_{k=1}^s a_k$ and through the graph of $a$, we have access to $a_k$ for all $k \leq s$. Let $free(s) = h(\pair{m,i})$. Since $v(i) > i$ and $h(\pair{m,i}) \in F$, we have $u(h(\pair{m,i})) = h(\pair{m, v(i)}) \in F$. So $u(c(y)) = u(free(s)) = free(t)$ for $t \geq s+1$ and we conclude that the number of $u(c(y))$ is at least as large as the number of $free(s+1)$, that is $f(y) = c^{-1}(u(c(y))) \geq 3s+4+\sum_{k=1}^{s+1} a_k$. In particular, $a_{s+1} \leq f(y)$ and we can compute $a_{s+1}$ in Step 2 using the value $f(y)$ and the graph of $a$.  
  \end{proof}

  It turns out there are similar examples with more natural choice of the function $u$.
  
  \begin{proposition}
   Let $g$ be the function, such that $g^\A$ is the predecessor function. Then $g$ is not primitive recursive.
  \end{proposition}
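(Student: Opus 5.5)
We argue by contradiction, in the style of the preceding proposition. Assuming $g$ is primitive recursive, we build a primitive recursive procedure that computes from $a_n$ an upper bound for $a_{n+1}$, which Property \ref{p3} forbids. By definition $g = c^{-1}\circ \Pred \circ c$, so
\[ g(c^{-1}(z)) = c^{-1}(z-1) \quad\text{for } z>0, \]
and hence $g^{k}(c^{-1}(z)) = c^{-1}(z-k)$ for all $k\le z$. So iterating $g$, starting from the $\A$-number of an element $z$, lists the $\A$-numbers of all standard numbers below $z$. We take $z$ standard-large but with a small $\A$-number that we can compute. Then some free element lies below $z$, and its $\A$-number is huge by construction, so the iteration reveals it.

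Concretely, for large $n$ take $z_n = h(\pair{n,a_n})$. Since $a_n\le a_{n+1}$, the explicit formula for $\A$-numbers gives
\[ c^{-1}(z_n) = 3n+a_n+2+\sum_{k=1}^n a_k . \]
This is computable primitive recursively (indeed elementarily) from the input $a_n$: using $a^{-1}$ and the elementary graph of $a$, we recover $n$ and all $a_k$ for $k\le n$, since each $a_k\le a_n$. Likewise $z_n$ itself is elementary in $a_n$. Define
\[ R(a_n) = \max_{k\le z_n} g^{k}\bigl(c^{-1}(z_n)\bigr). \]
This is a bounded iteration of the primitive recursive $g$, with starting point and iteration bound both elementary in $a_n$, so $R$ is primitive recursive. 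It equals the maximum of $c^{-1}(y)$ over all $y\le z_n$.

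Next we check that $free(n+1)\le z_n$. The elements $h(\pair{0,a_1+1}),\dots,h(\pair{0,a_1+n+2})$ are $n+2$ members of $F$, so $free(n+1)\le h(\pair{0,a_1+n+2})$. For large $n$ we have $a_n\ge a_1+n+2$. Monotonicity of pairing in both arguments and of $h$ then gives
\[ h(\pair{0,a_1+n+2}) \le h(\pair{0,a_n}) \le h(\pair{n,a_n}) = z_n . \]
Consequently
\[ R(a_n)\ \ge\ c^{-1}(free(n+1)) = 3n+4+\sum_{k=1}^{n+1}a_k\ >\ a_{n+1} . \]
Extending $R$ to all inputs by, say, $R(x)=x$ off $Ran(a)$ keeps it primitive recursive. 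Then $R(a_n)>a_{n+1}$ for all large $n$, contradicting Property \ref{p3}.

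The main obstacle is choosing $z$ correctly. A naive choice such as $h(\pair{n,0})$ fails: whether any element of $F$ lies below it depends on the pairing function, about which we assume only monotonicity. Choosing the second coordinate to be $a_n$ makes $z_n$ dominate the first $n+2$ free elements of block $0$ using monotonicity alone. Two further points need care: the iteration bound $z_n$ must be primitive recursive in the input $a_n$, and the admissibility condition $a_n\le a_{n+1}$ must hold so that the $\A$-number formula for $h(\pair{n,a_n})$ applies.
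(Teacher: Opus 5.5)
Your proof is correct, but it takes a different route from the paper's.

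The paper evaluates $g$ only once. For $s$ large enough that $free(s+1)<a_s$ and $h(\pair{s,a_s})<a_{s+1}$, it searches upward from $free(s+1)+1$ for the first standard number $x$ that is either in $Ran(a)$ or of the form $h(\pair{m,i})\notin F$ with $m\le s$. The $\A$-number $p$ of such an $x$ is computable from $a_0,\dots,a_s$. A case analysis then shows that $x-1$ is either some $free(t)$ with $t\ge s+1$ or some $h(\pair{m,i})$ with $m\ge s+1$, so the single value $g(p)=c^{-1}(x-1)$ already exceeds $a_{s+1}$.

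You instead start from one point $z_n$ whose $\A$-number is explicitly known. You then use the fact that $c^{-1}(z),g(c^{-1}(z)),\dots,g^{z}(c^{-1}(z))$ is exactly $c^{-1}(z),c^{-1}(z-1),\dots,c^{-1}(0)$. So a bounded iteration of $g$ followed by a maximum sweeps over every standard number below $z_n$, in particular over $free(n+1)$.

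This replaces the paper's search and its case analysis by a simple counting argument plus monotonicity of $h$ and of the pairing. The price is that your reduction of $a_n\mapsto a_{n+1}$ to $g$ iterates $g$ up to $z_n$ times rather than querying it once. That is irrelevant for primitive recursiveness, but it would matter for a finer complexity analysis of the reduction.

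Incidentally, the choice of $z$ is less delicate than your last paragraph suggests: $z=a_n$ itself works. Indeed, $c^{-1}(a_n)=3n+\sum_{k=1}^n a_k$ is computable from $a_n$. Moreover, $free(n+1)\le h(\pair{0,a_1+n+2})<a_n$ for all large $n$, because $a$ eventually dominates every primitive recursive function.
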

  \begin{proof}
    Assume $g$ is primitive recursive. As in the previous proposition, we will show that $a_s \mapsto a_{s+1}$ is primitive recursive, leading to a contradiction.

    We may assume that $s$ is sufficiently large, so that $free(s+1) < a_s$ and $h(\pair{s,a_s}) < a_{s+1}$.\newline
    
    Given input $a_s$:

    Try the values $x = free(s+1)+1,\; free(s+1)+2,\; \ldots,\; h(\pair{s, free(s+1)+1})$ successively:

    1. If $x \notin Ran(h)$, then terminate.

    2. If $x \in Ran(h)$, compute $m, i$, such that $x = h(\pair{m,i})$.

    \;\;\;\;\;2.1. If $x \notin F$ and $m \leq s$, then terminate.

    \;\;\;\;\;2.2. Else, $x = x+1$ and continue to 1.

    3. Compute the number $p$ of $x$ and then compute $a_{s+1}$ using $g(p)$.\newline

    First observe that the search for $x$ always terminates, because the last value $x = h(\pair{s, free(s+1)+1})$ satisfies the condition in 2.1.

    Consider the value of $x$ at the step of termination. If $x \notin Ran(h)$, then $x = a_k$ with $k \leq s$ (the above inequalities show that $k > s$ is not possible).
    If $x \in Ran(h)$, then $x = h(\pair{m,i})$ with $m \leq s$ and $i \leq a_{m+1}$. In both cases we can compute the number $p$ of $x$, because we have access to $a_0, a_1, \ldots, a_s$.
    Now consider the number $x-1$ from the previous step. Either $x-1 = free(s+1)$ (termination at the first step) or $x-1 \in F$ or $x-1 = h(\pair{m,i})$ with $m \geq s+1$ (termination at a later step).
    The value of $x$ only increases, therefore $x-1 = free(t)$ for $t \geq s+1$ or $x-1 = h(\pair{m,i})$ with $m \geq s+1$ and $i \leq a_{m+1}$. In both cases, the number of $x-1$ is clearly greater then $a_{s+1}$. By the definition of $g$, the number of $x-1$ is $g(p)$, therefore we can compute $a_{s+1}$ using $g(p)$ and the graph of $a$.  
  \end{proof}

\subsection{Exploring Images of Relations in the Model $\A$}

  Let $R$ be a unary relation. Its image $R^\A$ is defined by: $R(x) \Leftrightarrow R^\A(c(x))$ or equivalently, $R^\A(x) \Leftrightarrow R(c^{-1}(x))$.

  \begin{lemma}\label{ineq} The image $<^\A$ of the standard ordering is primitive recursive.
  \end{lemma}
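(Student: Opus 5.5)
To decide $x <^\A y$, that is $c^{-1}(x) < c^{-1}(y)$, I would not compute the $\A$-numbers themselves, since $c^{-1}$ is not primitive recursive. Instead I will attach to every $x$ a \emph{block label} computable elementarily from $x$. By the construction of $S^\A$, every element lies in exactly one block
$$ a_n \longrightarrow free(n) \longrightarrow h(\pair{n,0}) \longrightarrow \cdots \longrightarrow h(\pair{n,a_{n+1}}), $$
and the blocks occur in $\A$ in increasing order of $n$. So I will define a primitive recursive map $x \mapsto (n(x), r(x))$, where $n(x)$ is the index of the block containing $x$ and $r(x)$ is its position inside the block. I will then show that $x <^\A y$ holds iff $(n(x), r(x))$ precedes $(n(y), r(y))$ lexicographically.

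The map is computed by the three clauses of the definition of $S^\A$.
\begin{enumerate}[label=(\arabic*)]
  \item If $x = a_n$ for some $n \le x$, which is elementary by property~\ref{p2} and (\ref{leminv}), put $(n(x), r(x)) = (n, 0)$.
  \item If $x = free(n)$ for some $n \le x$, put $(n(x), r(x)) = (n, 1)$. This check is primitive recursive because $free$ is primitive recursive and strictly increasing.
  \item Otherwise $x = h(\pair{n,i})$ with $i \le a_{n+1}$. Decode $n$ and $i$ using the inverse of $h$ (search $k \le x$) and the pair projections, and put $(n(x), r(x)) = (n, i+2)$.
\end{enumerate}

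The point is that $r(x) = c^{-1}(x) - \bigl(3n + \sum_{k=1}^n a_k\bigr)$. This follows from the $\A$-number formulas established above: $a_n$ has $\A$-number $3n+\sum_{k\le n} a_k$, $free(n)$ has offset $1$, and $h(\pair{n,i})$ has offset $i+2$. Hence within one block the $\A$-order coincides with the order of $r$. Across blocks, the $\A$-number of every element of block $n$ is below that of $a_{n+1}$, and $a_{n+1}$ is the first element of block $n+1$. This again follows from the formulas, since the largest offset in block $n$ is $a_{n+1}+2$ and the next block starts at offset $a_{n+1}+3$. So blocks are ordered by $n$.

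Therefore I will define $x <^\A y$ to hold iff $n(x) < n(y)$, or $n(x) = n(y)$ and $r(x) < r(y)$. This is a composition of primitive recursive functions, so $<^\A$ is primitive recursive.

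The step needing the most care is the claim that the decoding in clause (3) is well defined and bounded. Every $x$ not of the form $a_n$ or $free(n)$ must be in $Ran(h)$ with $i \le a_{n+1}$. This holds because $Ran(h) = \Nat\setminus Ran(a)$ and the elements $h(\pair{n,i})$ with $i > a_{n+1}$ are exactly $F = Ran(free)$. Checking $i \le a_{n+1}$ never requires computing $a_{n+1}$, so the unbounded growth of $a$ plays no role. All searches are bounded by $x$, because $h$ is strictly increasing, so $h(k) \ge k$, and $a_n \ge n$ and $free(n) \ge n$ by strict monotonicity.
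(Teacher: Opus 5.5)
Your proposal is correct and is essentially the paper's proof: your map $x \mapsto (n(x), r(x))$ is exactly the paper's primitive recursive function $pos(x) = \pair{m,i}$ (with $i = 0$, $1$, $i'+2$ in the three clauses), and $<^\A$ is decided by the same lexicographic comparison. You additionally use the $\A$-number formulas to justify why the lexicographic order on labels agrees with $<^\A$, a step the paper leaves implicit.
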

  \begin{proof} We are given $x, y$ and we want to decide whether $c^{-1}(x) < c^{-1}(y)$ or $c^{-1}(x) > c^{-1}(y)$ (of course, $c^{-1}(x) = c^{-1}(y)$ if and only if $x = y$). We cannot use $c^{-1}$, but we can compute $m$, such that $x$ is in the portion of the model $\A$ between $a_m$ and $a_{m+1}$ (and the similar $n$ corresponding to $y$). To this end, we first describe the primitive recursion function $pos$:

    Given input $x$:

    1. If $\exists\; m \leq x$ with $a_m = x$, set $i = 0$.

    2. If $\exists\; m \leq x$ with $free(m) = x$, set $i = 1$.

    3. Otherwise, compute $m, i'$, such that $x = h(\pair{m,i'})$ and set $i = i'+2$.

    4. Return answer $pos(x) = \pair{m,i}$.\newline

   Now given $x, y$ we compute $pos(x) = \pair{m,i}$ and $pos(y) = \pair{n,j}$.
   
   If $m < n \;\vee\;(m = n \;\&\; i < j)$, then $x <^\A y$.
   
   If $m > n \;\vee\;(m = n \;\&\; i > j)$, then $y <^\A x$.
  \end{proof}

  Clearly, $R \leq_{PR} R^\A$. Therefore, if $R^\A$ is primitive recursive, then $R$ is also primitive recursive.
  Moreover, for any primitive recursive $S$, there exists a primitive recursive $R$ such that $R^\A = S$.
  Indeed, we can take $R(x) \Leftrightarrow S(c(x))$.

  So the image of a subclass of the primitive recursive relations in the model $\A$ is the class of all primitive recursive relations. We will see that this subclass is proper by exhibiting examples in which $R$ is primitive recursive, but $R^\A$ is not.

  By replacing $a_n$ with $\max(a_n,A(n,n))$, we may assume $A(n,n) \leq a_n$ for all $n$.
  
  This implies that we can choose a primitive recursive function $\phi$,
  such that $\psi_n(n) \;=\; \phi(a_n)$ for all $n$ (recall that $\psi$ is the enumeration of the unary primitive recursive functions, defined in Section \ref{sec prelim}).

  Consider now the relation $S$, defined by the following algorithm.
  
  Given input $x$:

  1. Compute the largest $a_k$ with $a_k \leq c^{-1}(x)$.
  
  2. Return output $1$ if $\phi(a_k) = 0$ and output $0$ if $\phi(a_k) \neq 0$.

  \begin{proposition} The relation $R$ defined by $R(x) \Leftrightarrow S(c(x))$ is primitive recursive,
    but the relation $R^\A = S$ is not primitive recursive.
  \end{proposition}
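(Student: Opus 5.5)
The plan has two parts: show that $R$ is primitive recursive by computing it directly, and show that $S$ is not primitive recursive by diagonalization. Unwinding the definitions first: $R(x)$ holds iff $S(c(x))$, and $c^{-1}(c(x))=x$. So $R(x)=1$ iff $\phi(a_k)=0$, where $a_k$ is the largest element of $Ran(a)$ with $a_k\le x$.

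\emph{$R$ is primitive recursive.} Given $x$, we search for the largest $a_k\le x$. This is a bounded search over $t\le x$ for the largest $t$ with $t\in Ran(a)$, and $Ran(a)$ is elementary by observation (iii). The index $k=a^{-1}(t)$ comes from $(*)$, though only the value $a_k$ itself is needed. We then evaluate the primitive recursive function $\phi$ at $a_k$. The composition is primitive recursive, and none of this uses $c^{-1}$.

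\emph{$S$ is not primitive recursive.} Suppose it is. Look at the element $free(n)$. Its $\A$-number is $3n+1+\sum_{k=1}^n a_k$, which lies in $[a_n,a_{n+1})$ for all sufficiently large $n$. It is at least $a_n$ trivially, and it is below $a_{n+1}$ by Property \ref{p3} applied to a primitive recursive bound such as $x\mapsto 4x^2+4$. The sum of the earlier $a_k$ needs care: since $a$ is increasing, $\sum_{k\le n}a_k\le (n+1)a_n\le (a_n+1)a_n$ once $a_n\ge n$. So the largest $a_k$ with $a_k\le c^{-1}(free(n))$ is $a_n$. Hence $S(free(n))=1$ iff $\psi_n(n)=\phi(a_n)=0$.

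Now define $d(n)=1\dot- S(free(n))$. This is primitive recursive, because $free$ is primitive recursive and $S$ is assumed to be. Then $d(n)=1$ if $\psi_n(n)\ne0$ and $d(n)=0$ if $\psi_n(n)=0$. In either case $d(n)=0$ iff $\psi_n(n)\ne 0$, so $d(n)\neq\psi_n(n)$ for all sufficiently large $n$, say $n\ge N$.

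To diagonalize fully, patch $d$ on the finitely many $n<N$ so that it differs from $\psi_n(n)$ there too: set $d'(n)=\psi_n(n)+1$ for $n<N$ and $d'=d$ otherwise. This finite modification is still primitive recursive. Since $\psi$ enumerates all unary primitive recursive functions, $d'=\psi_e$ for some $e$, and then $d'(e)\ne\psi_e(e)$, a contradiction.

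The main obstacle is the bookkeeping in the second part: confirming that the $\A$-number of $free(n)$ really falls in the block governed by $a_n$, i.e.\ is below $a_{n+1}$, for large $n$. Once this bound is secured, the rest is routine.
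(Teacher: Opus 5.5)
Your overall route is the paper's. You compute $R$ directly from $Ran(a)$ and $\phi$, and you refute primitive recursiveness of $S$ by evaluating it on a primitive recursive sequence of elements whose $\A$-numbers lie in $[a_n,a_{n+1})$. You use $free(n)$ where the paper uses $h(\pair{n,0})$, which is immaterial. Your explicit bound via $x\mapsto 4x^2+4$ and your finite patch supply details the paper leaves implicit.

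The diagonalization step as written is wrong. You correctly derive that $S(free(n))=1$ iff $\psi_n(n)=0$, and then define $d(n)=1\dot{-}S(free(n))$. As your own next sentence says, this gives $d(n)=0$ when $\psi_n(n)=0$ and $d(n)=1$ when $\psi_n(n)\neq0$, so $d(n)=\mathrm{sg}(\psi_n(n))$. The following sentence, ``$d(n)=0$ iff $\psi_n(n)\neq 0$'', contradicts this. In fact $d(n)=\psi_n(n)$ whenever $\psi_n(n)\in\{0,1\}$, so the claim $d(n)\neq\psi_n(n)$ for all $n\geq N$ is false. Your final step therefore yields no contradiction. If $d'=\psi_e$, the patch forces $e\geq N$, and then $d'(e)=\mathrm{sg}(\psi_e(e))=\psi_e(e)$ is perfectly consistent.

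The flip is exactly what undoes the diagonalization, because $n\mapsto S(free(n))$ is already the diagonal function. It equals $1\neq 0=\psi_n(n)$ when $\psi_n(n)=0$, and it equals $0\neq\psi_n(n)$ otherwise. Take $d(n)=S(free(n))$ for $n\geq N$ and keep your patch $d'(n)=\psi_n(n)+1$ for $n<N$. Then $d'$ differs from $\psi_n(n)$ at every $n$, the contradiction goes through, and the argument coincides with the paper's proof.
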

  \begin{proof}
    First we argue that $R$ is primitive recursive. Given input $c(x)$ to $S$, using the graph of $a$,
    we can determine the largest $a_k$ with $a_k \leq c^{-1}(c(x)) = x$ and then return the output using the primitive recursive function $\phi$.

    It is clear that $R^\A = S$, so it remains to show that $S$ is not primitive recursive.

    Given $n$, we can take the input $x = h(\pair{n,0})$. Then $c^{-1}(x)$ is the number of $x$,
    that is $c^{-1}(x) = 3n + 2 + \sum_{k=1}^{n} a_k$. The largest $a_k$ with $a_k \leq c^{-1}(x)$ is $a_n$ for sufficiently large $n$. Therefore, $\phi(a_k) = \phi(a_n) = \psi_n(n)$. We obtained that for almost all $n$,
    $S(h(\pair{n,0}))$ returns output, which is different from $\psi_n(n)$, therefore $S$ cannot be primitive recursive.
  \end{proof}

  We can obtain many other examples in the same spirit.
  
  For any primitive recursive function $f$, such that $f(x) \geq x$,
  let us take the relation $R_f$, defined by $R_f(x) \Leftrightarrow S(c(f(x)))$. Basically the same proof gives that $R_f$ is primitive recursive and $R_f^\A$ is not primitive recursive. Given input $x = h(\pair{n,0})$,
  $R_f^\A(x) \Leftrightarrow S(y)$, where $y = c(f(c^{-1}(x)))$. So the largest $a_k \leq c^{-1}(y) = f(3n + 2 + \sum_{k=1}^n a_k)$ is again $a_n$ for sufficiently large $n$.

\subsection{Enforcing Primitive Recursive Images of Linear Functions}

The natural next step is to produce a nonstandard punctual copy $\D$, in which the image of the doubling function is primitive recursive. We take the punctual copy $\A$, we replace its domain with the even numbers and put its $n$-th element at position $2^n$ in the new copy $\D$. In such a way, the image of the doubling function in $\D$, restricted to the elements of $\A$, coincides with the image of successor in $\mathcal{A}$. In addition, we must take care to fill all the other positions with the odd numbers in such a way that the image of doubling over them is also primitive recursive.

In more detail: We take the model $\A = (\Nat,S^\A)$ and replace $\Nat$ by $2\Nat$ so that the odd numbers become free. Let us take two elements $2x$ and $2y$, such that $S^\A(x) = y$ and assume we have inserted elements preceding $2x$, so that the new number of $2x$ is $p'$, where $p' \geq 2$. Then we insert the following odd numbers between $2x$ and $2y$ :
  $$ 2x \;\rightarrow\; odd'(x) \;\rightarrow\; 2\pair{x,0} + 1 \;\rightarrow\; 2\pair{x,1} + 1 \;\rightarrow\; \ldots \;\rightarrow\; 2\pair{x,k_x-1} + 1 \;\rightarrow\; 2y, $$
  so that the new number of $2y$ is $2p'$ and $odd'$ is the strictly increasing enumeration of the set $\{2\pair{x,i} + 1\;|\;i \geq k_x\}$. Let us call this new copy $\mathcal{D} = (\Nat,S^\mathcal{D})$. It begins in the following way:
  \begin{align*}
   &2c(0) \;\rightarrow\; odd'(c(0)) \;\rightarrow\; 2c(1) \;\rightarrow\; odd'(c(1)) \;\rightarrow\\
   &2c(2) \;\rightarrow\; odd'(c(2)) \;\rightarrow\; 2\pair{c(2),0} + 1 \;\rightarrow\; 2\pair{c(2),1} + 1 \;\rightarrow\; 2c(3) \;\rightarrow\; \ldots
  \end{align*}
  We have $k_{c(0)} = 0$ and for $p \geq 1$, $k_{c(p)} = 2^p - 2$. The new number $p'$ of $2x$ is $2^p$, where $p = c^{-1}(x)$ is the old number of $x$. It follows that the relation $k_x = i$ is primitive recursive: $k_x = i \Leftrightarrow \exists p \leq i+1\; [x = c(p) \;\&\; \max(2^p,2) = i+2]$.
  
  So the set $Ran(odd')$ and its enumeration $odd'$ are also primitive recursive.

  \begin{proposition} The successor $S^\mathcal{D}$ is primitive recursive.
  \end{proposition}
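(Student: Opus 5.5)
We define $S^\D$ by cases on the parity and form of the input $z$, and check that each case is primitive recursive using $S^\A$, the relation $k_x = i$, and the enumeration $odd'$.

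\emph{Case $z=2x$ is even.} Then $S^\D(2x)=odd'(x)$, which is primitive recursive. The one exception is $x$ with $k_x$ small (for instance $x=c(0)$ or $x=c(1)$), where the listed chain has length zero. There we could instead set $S^\D(2x)=odd'(x)$ uniformly, since $odd'(x)$ is always inserted right after $2x$.

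\emph{Case $z$ is odd.} Write $z=2w+1$ and decode $w=\pair{x,i}$ with $x=(w)_0\le w$ and $i=(w)_1\le w$. We then need the bound $k_x$. This is the delicate point: we cannot compute $c^{-1}(x)$ directly. Instead we decide whether $k_x\le i+1$ by a bounded search on the primitive recursive relation $k_x=j$ for $j\le i+1$. This is exactly what the displayed characterization $k_x=i\Leftrightarrow\exists p\le i+1[\ldots]$ provides, where $c(p)$ is primitive recursive.
\begin{itemize}
\item If $i<k_x$, then $z$ lies in the explicit chain. Its successor is $2\pair{x,i+1}+1$ when $i+1<k_x$, and $2S^\A(x)$ when $i+1=k_x$.
\item If $i\ge k_x$, then $z\in Ran(odd')$. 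Say $z=odd'(x')$; then $S^\D(z)=2\pair{x',0}+1$, or $2S^\A(x')$ when $k_{x'}=0$.
\end{itemize}

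\emph{Main obstacle.} The hard part is inverting $odd'$ primitive recursively: given $z$, find $x'$ with $odd'(x')=z$. Since $odd'$ is strictly increasing, $odd'(x')\ge x'$, so a bounded search over $x'\le z$ suffices. Membership $z\in Ran(odd')$ is primitive recursive, as already noted in the text.

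All cases use bounded searches with bounds linear in $z$, together with the primitive recursive functions $S^\A$, $c$, $odd'$ and the pairing projections. Hence $S^\D$ is primitive recursive.
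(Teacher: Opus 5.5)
Your proof is correct and follows the paper's case analysis essentially step for step. Even inputs $2x$ go to $odd'(x)$. Odd inputs $2\pair{x,i}+1$ are split by comparing $i+1$ with $k_x$ through the primitive recursive relation $k_x=j$, and the $Ran(odd')$ case is handled by a bounded inversion of $odd'$ followed by the test $k_{x'}=0$. The only blemish is the ``exception'' you mention in the even case, which is not actually an exception: $odd'(x)$ immediately follows $2x$ even when $k_x=0$.
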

  \begin{proof} The algorithm is a straightforward formalization of the definition of the model $\mathcal{D}$. We will use that $\mathcal{S}^\A$ is primitive recursive and that the relation $k_x = i$ is primitive recursive, which implies that the relations $k_x < i$ and $k_x > i$ are also primitive recursive.

  Given input $n$:

  1. If there exists $t \leq n$, such that $2t+1 = n$:
  
\;\;\; Compute $x,i$, such that $t = \pair{x,i}$.

\;\;\;\;\;    1.1. If $k_x > 0 \;\;\&\;\; i+1 = k_x$, then $S^\mathcal{D}(n) = 2S^\A(x)$.

\;\;\;\;\;    1.2. If $k_x > 0 \;\;\&\;\; i+1 < k_x$, then $S^\mathcal{D}(n) = 2\pair{x, i+1} + 1$.

\;\;\;\;\;    1.3. If $k_x = 0$ or $(k_x > 0 \;\;\&\;\; i+1 > k_x)$, then:

\;\;\;\;\;\;\; Compute $m$, such that $odd'(m) = n$.

\;\;\;\;\;\;\;\;\; 1.3.1. If $k_m = 0$, then $S^\mathcal{D}(n) = 2S^\A(m)$.
                   
\;\;\;\;\;\;\;\;\; 1.3.2. Otherwise, $S^\mathcal{D}(n) = 2\pair{m,0} + 1$.
  
  2. Otherwise, $S^\mathcal{D}(n) = odd'(\lfloor \frac{n}{2} \rfloor).$
\end{proof}

\begin{proposition} The image $d^\mathcal{D}$ of the function $d(x) = 2x$ is primitive recursive.
\end{proposition}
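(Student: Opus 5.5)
The plan is to compute $d^\mathcal{D}$ directly from the local structure of $\mathcal{D}$. We never compute the $\mathcal{D}$-number of an element. Instead we use the fact that the number of each element, written relative to the number $2^p$ of the even element $2c(p)$ preceding it, has a simple form, and that doubling this form lands in a block we can name explicitly using only $S^\A$. Recall that $d^\mathcal{D}(z)$ is the element whose $\mathcal{D}$-number is twice the $\mathcal{D}$-number of $z$.

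First I would record the $\mathcal{D}$-numbers, reading them off the description of $\mathcal{D}$ and checking them against the displayed initial segment. The element $2c(0)$ has number $0$, $odd'(c(0))$ has number $1$, and $2c(p)$ has number $2^p$ for $p \geq 1$. For $p \geq 1$, $odd'(c(p))$ has number $2^p+1$, and $2\pair{c(p),i}+1$ has number $2^p+2+i$ for $i < k_{c(p)} = 2^p-2$.

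Next I would double each of these numbers.
\begin{itemize}
\item $2 \cdot 2^p = 2^{p+1}$, so $d^\mathcal{D}(2x) = 2S^\A(x)$ when $x \neq c(0)$, while $d^\mathcal{D}(2c(0)) = 2c(0)$.
\item $2(2^p+1) = 2^{p+1}+2$, which is the number of $2\pair{c(p+1),0}+1$; this element exists because $k_{c(p+1)} = 2^{p+1}-2 \geq 1$ for $p \geq 1$. Hence $d^\mathcal{D}(odd'(x)) = 2\pair{S^\A(x),0}+1$ for $x \neq c(0)$. For $x = c(0)$ the number is $1$, so $d^\mathcal{D}(odd'(c(0))) = 2c(1)$.
\item $2(2^p+2+i) = 2^{p+1}+2+(2i+2)$. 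Since $i \leq 2^p-3$, we have $2i+2 \leq 2^{p+1}-4 < k_{c(p+1)}$, so this is the number of $2\pair{S^\A(x),2i+2}+1$. Hence $d^\mathcal{D}(2\pair{x,i}+1) = 2\pair{S^\A(x),2i+2}+1$ whenever $i < k_x$.
\end{itemize}

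The algorithm on input $n$ is then the following. If $n$ is even, write $n = 2x$ and apply the first case, testing whether $x = c(0)$; this is a fixed constant. If $n$ is odd, write $n = 2\pair{x,i}+1$ and use the primitive recursive relation $k_x = i$, hence also $k_x > i$, to decide whether $i < k_x$. If so, apply the third case. Otherwise $n \in Ran(odd')$, so we compute $m$ with $odd'(m) = n$ (a bounded search, since $odd'$ is primitive recursive and strictly increasing) and apply the second case to $x = m$. All ingredients are primitive recursive: $S^\A$, the pairing, $odd'$, the relation $k_x = i$, and the constant $c(0)$. So $d^\mathcal{D}$ is primitive recursive.

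The main obstacle is the bookkeeping in the last two cases: confirming that the doubled offset always stays strictly inside the block of $c(p+1)$, and treating the degenerate small cases $p=0$ (and $p=1$, where $k_{c(1)}=0$, so there are no pair elements in that block) separately. The inequality $2i+2 \leq 2^{p+1}-4$ settles the third case, and it is precisely the choice $k_{c(p)} = 2^p-2$ that makes the block lengths double. The key point throughout is that none of the outputs requires knowing $p$ itself, only $x$ and $S^\A(x)$.
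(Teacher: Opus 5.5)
Your proposal is correct and follows essentially the same route as the paper. You use the same three-way case split (even elements $2x$, block elements $2\pair{x,i}+1$ with $i<k_x$, and elements of $Ran(odd')$), resolved using only $S^\A$, the pairing, $odd'$ and the primitive recursive relation $k_x=i$; your test $x=c(0)$ in the $odd'$ case plays the role of the paper's test $k_{S^\A(m)}>0$. Your explicit bookkeeping of $\mathcal{D}$-numbers is more careful than the paper's sketch, and your block-case output $2\pair{S^\A(x),2i+2}+1$ is the right one, since the paper's printed formula has a spurious factor $2$ in front of $S^\A(x)$ inside the pair.
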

\begin{proof} The idea is the following: if we have as input $2x$, then we can jump to $d^\mathcal{D}(2x) = 2y$ using the old successor $S^\A$. If we have as input $odd'(x)$ or $2\pair{x,i} + 1$, we can compute $x$, jump to $2y$ as in the first case and then go right twice or $2i+4$ times, accordingly. This crucially depends on the fact that the difference $d(p+1) - d(p)$ is constant.
  
  Given input $n$:

  1. If there exists $t \leq n$, such that $2t+1 = n$:
  
\;\;\; Compute $x,i$, such that $t = \pair{x,i}$.

\;\;\;\;\;    1.1. If $(k_x > 0 \;\;\&\;\; i+1 \leq k_x)$, then $d^\mathcal{D}(n) = 2\pair{2S^\A(x), 2i+2} + 1$.

\;\;\;\;\;    1.2. Otherwise:

\;\;\;\;\;\;\; Compute $m$, such that $odd'(m) = n$ and $y = S^\A(m)$.

\;\;\;\;\;\;\; If $k_y > 0$, then $d^\mathcal{D}(n) = 2\pair{y,0} + 1$, otherwise $d^\mathcal{D}(n) = 2y$.
  
  2. Otherwise:
  
  If $n = 2c(0)$, then $d^\mathcal{D}(n) = n$, else $d^\mathcal{D}(n) = 2S^\A(\lfloor \frac{n}{2} \rfloor).$
\end{proof}

\begin{proposition}
  The only linear functions whose images are primitive recursive in the model $\mathcal{D} = (\Nat,S^\mathcal{D})$ are those of the form $2^a x + b$.
\end{proposition}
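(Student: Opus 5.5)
The plan is to prove the two directions separately. Throughout, ``linear'' means $f(x)=ax+b$ with $a\ge 1$. Constant functions have constant, hence trivially primitive recursive, images, so they are excluded.

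\emph{Sufficiency.} Images respect composition: $(g\circ f)^{\mathcal D}=g^{\mathcal D}\circ f^{\mathcal D}$. We have
$$2^a x+b=S^b(d^a(x)),$$
so its image is $(S^{\mathcal D})^b\circ (d^{\mathcal D})^a$. This is primitive recursive by the two preceding propositions, since $S^{\mathcal D}$ and $d^{\mathcal D}$ are primitive recursive.

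\emph{Necessity.} Suppose $a$ is not a power of $2$. Write $a=2^j m$ with $m\ge 3$ odd, and suppose towards a contradiction that $f^{\mathcal D}$ is primitive recursive. I will use the explicit positions in $\mathcal D$. For $p\ge1$ the element $2c(p)$ has $\mathcal D$-number $2^p$. The elements strictly between $2c(p)$ and $2c(p+1)$ are:
\begin{itemize}
\item $odd'(c(p))$, with number $2^p+1$;
\item $2\pair{c(p),i}+1$, with number $2^p+2+i$, for $i<2^p-2$.
\end{itemize}

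Take the test inputs $z_n=2\,free(n)$. By the computation of $\mathcal A$-numbers, $free(n)$ has $\mathcal A$-number
$$p_n=3n+1+\sum_{k=1}^n a_k\ \ge\ a_n,$$
so $z_n$ has $\mathcal D$-number $2^{p_n}$. Also $z_n$ is primitive recursive in $n$, because $free$ is.

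Then $f^{\mathcal D}(z_n)$ has $\mathcal D$-number $a2^{p_n}+b=m2^{p_n+j}+b$. Let $q=p_n+j+\lfloor\log_2 m\rfloor$. Since $m\ge3$ is odd, for large $n$ we get
$$2^q+2^{p_n}\le m2^{p_n+j}+b<2^{q+1}.$$
So this number is not a power of $2$, and it lies strictly inside the block after $2c(q)$, with offset at least $2^{p_n}\ge 2$. The main point to verify is that the offset is at least $2$, so that $f^{\mathcal D}(z_n)$ is not $odd'(c(q))$. Granting this, $f^{\mathcal D}(z_n)=2\pair{c(q),i_n}+1$ with
$$i_n=m2^{p_n+j}+b-2^q-2\ \ge\ 2^{p_n}-2\ >\ a_n \quad\text{for large } n.$$

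The key point is that $i_n$ is obtained primitive recursively from $n$. Compute $f^{\mathcal D}(z_n)$, subtract $1$, halve, and take the second component of the pair. This yields a primitive recursive function $g(n)=i_n>a_n$ for almost all $n$. That contradicts observation (i) following Property \ref{p3}, which says that $a_n$ eventually dominates every primitive recursive function.

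The main obstacle is the bookkeeping in the block positions: making sure the image lands on an element of the form $2\pair{c(q),i}+1$ rather than on $2c(q)$, $2c(q+1)$ or $odd'(c(q))$. This is exactly where oddness of $m\ge3$ is needed, via the gap $m2^{p_n+j}-2^q\ge 2^{p_n}$.
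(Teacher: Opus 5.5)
Your proof is correct and follows essentially the paper's argument: both locate $f^{\mathcal D}(2x)$ inside the block after $2c(q)$, at offset at least $2^{c^{-1}(x)}$, and then read off the large second pair component $i$. The only difference is the final step. The paper uses $i$ to bound a search for $c^{-1}(x)$ on arbitrary inputs, which gives $c^{-1}\leq_{PR} f^{\mathcal D}$. You instead specialize to $x=free(n)$ and contradict observation (i) directly.
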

\begin{proof} Clearly, the image of the function $\lambda x.2^a x + b$ is $\lambda x.((S^{\mathcal{D}})^b(d^{\mathcal{D}})^a(x))$, therefore it is primitive recursive.

  Now let us consider the linear function $f(x) = mx + b$, where $2^{a} < m < 2^{a+1}$ for some $a$. We will show that $c^{-1} \leq_{PR} f^\mathcal{D}$ and therefore, $f^\mathcal{D}$ cannot be primitive recursive.

  Given input $x$:

  1. If $c^{-1}(x) \leq b$, then give the answer using a table.

  2. Else compute $f^\mathcal{D}(2x)$ and then $x', i$, such that $f^\mathcal{D}(2x) = 2\pair{x',i} + 1$.
  
  3. Search for $p \leq i+1$, such that $c(p) = x$ and give answer $p$.\newline

  Let $p = c^{-1}(x)$ be the $\A$-number of $x$. In Step 2, $p > b$ and the $\D$-number of $2x$  is $p' = 2^p \geq 2$, therefore the $\D$-number of $f^\mathcal{D}(2x)$ is $mp' + b$. Now we have (these are the $\D$-numbers of the corresponding elements)
  $$ 2^a p' + 2 \;\leq\; (m-1)p' + 2 \;\leq\; mp' \;\leq\; mp'+b \;<\; mp'+p \;<\; mp'+p' \;\leq\; 2^{a+1}p'. $$
  By the construction of the model $\mathcal{D}$, we have $f^\mathcal{D}(2x) = 2\pair{x',i} + 1$,
  where $i = (mp'+b) - (2^a p'+2) \;\geq\; p' - 2$ (the $\D$-number of $2x'$ is $2^ap'$, but we do not need this fact).
  We obtained $p' \leq i+2$, that is $2^p \leq i+2$, which implies $p \leq i+1$ and the search in 3. is successful.
\end{proof}

\begin{proposition}\label{propsq} Let $f : \Nat \to \Nat$ be a primitive recursive function, such that $f(x) \geq x^2$ for all sufficiently large $x$.
  Then $f^\mathcal{D}$ is not primitive recursive.
\end{proposition}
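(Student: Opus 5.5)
We argue as in Proposition~\ref{prop2x}: we evaluate $f^\D$ at a sequence of inputs that is primitive recursive in $n$, and read off from the output an index that is at least $a_n$. Suppose, for contradiction, that $f^\D$ is primitive recursive. The natural inputs are the elements $2\,free(n)$ of $\D$. Let $p_n = 3n+1+\sum_{k=1}^n a_k$ be the $\A$-number of $free(n)$. The $\D$-number of $2\,free(n)$ is then $2^{p_n}$, so the $\D$-number of $z_n := f^\D(2\,free(n))$ is $f(2^{p_n})$. The map $n\mapsto z_n$ is primitive recursive because $free$ is.

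\textbf{Locating $z_n$.} Recall that in $\D$ the element $2y$ has $\D$-number $2^q$, where $q=c^{-1}(y)$. Every other element has $\D$-number strictly between $2^q$ and $2^{q+1}$ for a unique $y$, and it is either $odd'(y)$ or $2\pair{y,j}+1$. We bound $f(2^{p_n})$ on both sides.
\begin{itemize}
\item \emph{Lower bound.} By hypothesis $f(2^{p_n}) \geq 2^{2p_n}$ for large $n$.
\item \emph{Upper bound.} We have $p_n \le a_n^2$ for large $n$. Let $G$ be a monotone primitive recursive majorant of $y\mapsto f(2^{y^2})$. Property~\ref{p3} then gives $f(2^{p_n}) \le G(a_n) < a_{n+1} < 2^{a_{n+1}}$ for large $n$.
\end{itemize}
Hence the element $y_n$ attached to $z_n$ (i.e.\ $z_n=2y_n$, or $z_n=odd'(y_n)$, or $z_n=2\pair{y_n,j}+1$) has $\A$-number $q_n$ with $2p_n \le q_n < a_{n+1}$. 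In $\A$, the elements with $\A$-number in $[p_n+1,\, c^{-1}(a_{n+1}))$ are exactly $h(\pair{n,i})$ with $i\le a_{n+1}$, and $q_n$ lies in this range. Therefore $y_n = h(\pair{n,i_n})$, and comparing $\A$-numbers,
$$3n+i_n+2+\sum_{k=1}^n a_k = q_n \ge 2p_n,$$
which yields $i_n \ge a_n$ for all sufficiently large $n$.

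\textbf{Extracting $a_n$.} The map $n\mapsto i_n$ is primitive recursive. From $z_n$ we recover $y_n$ as follows: if $z_n$ is even, $y_n = z_n/2$; if $z_n=2t+1$ with $t=\pair{y,j}$, $j<k_y$, then $y_n=(t)_0$; otherwise $y_n$ is the $m$ with $odd'(m)=z_n$, found by bounded search using that $odd'$ and the relation $k_x=i$ are primitive recursive. Then $i_n=(h^{-1}(y_n))_1$ with $h^{-1}$ elementary. Since $i_n \ge a_n$, we can now compute $a_n$ primitive recursively: search for $k\le i_n$ with $a_n=k$, using that the graph of $a$ is elementary. This contradicts the fact that $a$ is not primitive recursive (observation~(i)).

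\textbf{Main obstacle.} The delicate step is the upper bound $f(2^{p_n}) < 2^{a_{n+1}}$. It needs Property~\ref{p3} applied to a primitive recursive function of $a_n$ that dominates $f(2^{p_n})$, which requires the crude estimate $p_n\le a_n^2$, itself obtained from monotonicity of $a$. The lower bound $q_n\ge 2p_n$ uses $\lfloor\log_2 f(2^{p_n})\rfloor \ge 2p_n$, and this is precisely where the hypothesis $f(x)\ge x^2$ enters.
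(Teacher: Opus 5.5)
Your argument is correct, but it is organized differently from the paper's. The paper does not re-run the diagonalization against $a$; it reduces to Proposition~\ref{prop2x}. Assuming $f^\D$ is primitive recursive, it defines $g^\A(x)=S^\A(x')$, where $x'$ is the $\A$-element whose block in $\D$ contains $f^\D(2x)$. This is primitive recursive by the same block-decoding you describe. Its preimage is essentially $g(p)=\lfloor\log_2 f(2^p)\rfloor+1$, which is primitive recursive and satisfies $g(p)\ge 2p$ eventually, contradicting Proposition~\ref{prop2x}.

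Your proof is this argument with Proposition~\ref{prop2x} unfolded at the inputs $free(n)$:
\begin{itemize}
\item your $y_n$ is $g^\A(free(n))$ up to one $S^\A$-step;
\item your lower bound $q_n\ge 2p_n$ is $g(p)\ge 2p$;
\item your upper bound $f(2^{p_n})<2^{a_{n+1}}$ is the Property~\ref{p3} step that the paper gets for free inside Proposition~\ref{prop2x}, applied there to the primitive recursive $g$.
\end{itemize}
The paper's route is shorter and needs no hand-made domination estimate. It also isolates a transfer step from $\A$ to $\D$ that uses nothing about the internal structure of $\A$, and the same template is reused in Propositions~\ref{main1} and~\ref{main2}. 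Your route is self-contained and shows exactly where the growth of $a$ and the hypothesis $f(x)\ge x^2$ enter. The cost is that it depends on the explicit $h(\pair{n,i})$-blocks of $\A$.

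Two small repairs are needed in your upper bound.
\begin{itemize}
\item A monotone majorant of $y\mapsto f(2^{y^2})$ only controls $f$ at arguments of the form $2^{y^2}$. Since $f$ is not assumed monotone and $2^{p_n}$ need not have that form, take $G(y)=\max_{z\le 2^{y^2}} f(z)$ instead.
\item The estimate $p_n\le a_n^2$ needs slightly more than monotonicity, for instance $a_n\ge n+4$ eventually, which follows from observation~(i) in Section~\ref{sec prelim}. Alternatively, use $p_n\le 3n+1+na_n\le a_n^2+3a_n+1$, which follows from $a_k\le a_n$ and $n\le a_n$, and build that bound into $G$.
\end{itemize}
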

\begin{proof}
  Let $f(x) \geq x^2$ for almost all $x$ and assume $f$ and $f^\mathcal{D}$ are primitive recursive.

  We will define a new function $g^\A$ and then we will argue that its preimage $g$ is primitive recursive and $g(p) \geq 2p$ for almost all $p$, which is impossible by Proposition \ref{prop2x}.

  In order to compute $g^\A(x)$: compute $f^\mathcal{D}(2x)$ and give answer $y$, where $2y$ is the smallest even element, whose $\D$-number is greater than the $\D$-number of $f^\mathcal{D}(2x)$.

  Observe that $f^\mathcal{D}(2x)$ has the form $2x'$, $odd(x')$ or $2\pair{x',i} + 1$ for some $x', i$, therefore $y = S^\A(x')$ and thus $g^\A$ is primitive recursive.

  If $x$ has sufficiently large $\A$-number $p$, then $2x$ has $\mathcal{D}$-number $2^p$, therefore $f^\mathcal{D}(2x)$ has $\mathcal{D}$-number $f(2^p) \geq 2^{2p}$, so the $\mathcal{D}$-number of $2y$ is also greater than $2^{2p}$. It follows that the $\A$-number of $y$ is greater than $2p$, that is $g(p) \geq 2p$.

  In addition, the $\A$-number $g(p)$ of $y$ can be computed primitively recursively from $f^\mathcal{D}(2x)$ and its $\mathcal{D}$-number $f(2^p)$.
  Since $f$ is primitive recursive, $g$ is also primitive recursive. So the proof is finished.
\end{proof}

\subsection{Enforcing Primitive Recursive Addition and Multiplication in the Model}\label{cont1a2}

The idea from the previous section cannot be used to obtain that the doubling and the tripling functions have primitive recursive images simultaneously. For example, in the model $\D$ the element at position $3 \cdot 2^n$ = $2^{n+1} + 2^n$ lies in the middle between the $n$-th and the $(n+1)$-st element of $\mathcal{A}$ and is therefore unaccessible without knowing $n$.

One approach to overcome this problem is to use an mainland-island construction. This is done in the proof of Theorem \ref{ax+b} in Section \ref{sec contribution 1b technical}.

Toward an alternative approach, we observed that a hierarchical structure can be imposed on the elements whose positions lie between $2^n$ and $2^{n+1}$ by using not single elements of the original copy $\mathcal{A}$, but lists of such elements. For example, the list $\langle c(n) \rangle$ with length $1$ obtains position $2^n$ for all $n$. In order to double the position, just apply the successor $S^\mathcal{A}$ to the elements of the list. In order to triple the position of $\langle c(n) \rangle$, we put the element $\langle c(n), c(n+1) \rangle$ at position $3 \cdot 2^n$. Thus we gradually came to the idea that we can take the exponents of the base-2 expansion of the position and put there the list of the elements of $\mathcal{A}$ whose $\mathcal{A}$-numbers are precisely these exponents.

This idea was prolific, because in the obtained copy, the image of the addition function is primitive recursive. Moreover, iterating the construction twice, produces a copy in which the image of the multiplication function is primitive recursive as well.

Then it is natural to hope that another iteration would make the image of the exponential function primitive recursive. But that is not the case: in the obtained copy the image of the function $2^x$ is primitive recursive if and only if it already is in the original copy $\mathcal{A}$.

More formally: given any punctual copy $\A = (\Nat, S^\A)$ of $(\Nat, S)$ with unique isomorphism $c$ from $(\Nat, S)$ to $\A$
we define a new copy $\widetilde{\A}$ by the following isomorphism $\widetilde{c}$:
$$ \widetilde{c}(0) \;=\; 0, \;\;\;\; \widetilde{c}(2^{i_k} + 2^{i_{k-1}} + \ldots + 2^{i_0}) \;=\; 2^{c(i_k)} + 2^{c(i_{k-1})} + \ldots + 2^{c(i_0)}. $$
So the individuals in $\widetilde{\A}$ are regarded as having the form $\widetilde{c}(n)$ for $n\in\Nat$ and the $\widetilde{\A}$-number of $\widetilde{c}(n)$ is $n$.
Therefore, the successor $S^{\widetilde{\A}}$ satisfies $S^{\widetilde{\A}}(\widetilde{c}(n)) = \widetilde{c}(n+1)$.

\begin{lemma} The function $add : \Nat\times\Nat \to \Nat$, defined by the equality
  $$ add(x,u) \;=\; \widetilde{c}(n + 2^j), $$
  where $x = \widetilde{c}(n)$ and $u = c(j)$ is primitive recursive.
\end{lemma}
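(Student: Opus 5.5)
The plan is to show that $add$ is just binary addition of $2^j$ to $n$, carried out directly on the encoded representation $x=\widetilde{c}(n)$ using only the successor $S^\A$, and that it needs a number of steps bounded by the number of binary digits of $x$.

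\textbf{Setup.} Write $n = 2^{i_k}+\dots+2^{i_0}$ with $i_k>\dots>i_0$. By definition of $\widetilde{c}$, the set of exponents $E(x)=\{e : \text{bit } e \text{ of } x \text{ is } 1\}$ equals $\{c(i_k),\dots,c(i_0)\}$. Since $c$ is a bijection, for every $e$ we have $e\in E(x)$ if and only if $c^{-1}(e)$ is a binary digit of $n$ equal to $1$. Because $c$ is a bijection onto $\Nat$, every $u$ equals $c(j)$ for a unique $j$, so $add$ is total and well defined.

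\textbf{Carry propagation.} Adding $2^j$ to $n$ works as follows. Let $r\ge 0$ be least such that bit $j+r$ of $n$ is $0$. Then bits $j,\dots,j+r-1$ of $n$ are cleared and bit $j+r$ is set. Translating through $c$, and using $c(j+t)=(S^\A)^t(u)$, we get
$$ add(x,u) \;=\; x \;-\; \sum_{t<r} 2^{(S^\A)^t(u)} \;+\; 2^{(S^\A)^r(u)}, $$
where $r$ is least with $(S^\A)^r(u)\notin E(x)$. Whether $e\in E(x)$ is an elementary test on $x$ and $e$.

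\textbf{Bounding the work.} The values $(S^\A)^t(u)$ for $t<r$ are distinct elements of $E(x)$, so $r\le |E(x)|\le \log_2 x+1$. The algorithm is therefore a bounded loop. Set $v_0=u$ and $y_0=x$. For $t<\log_2 x+1$, if $v_t\in E(y_t)$, put $y_{t+1}=y_t-2^{v_t}$ and $v_{t+1}=S^\A(v_t)$; otherwise stop and output $y_t+2^{v_t}$. The sequence $(y_t,v_t)$ is defined by primitive recursion from the primitive recursive $S^\A$, with the number of iterations bounded by $x+1$.

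What remains is the usual check that all intermediate values are bounded by a primitive recursive function of $x$ and $u$. Each $v_t$ with $t\ge1$ is $S^\A$ of an element of $E(x)\cup\{u\}$, so
$$ v_t \le M(x,u) := \max\{S^\A(e): e\le \max(x,u)\}\cup\{u\}, $$
which is primitive recursive. Hence the output is at most $x+2^{M(x,u)}$, and each $y_t\le x$. It follows that $add$ is primitive recursive. When $x=0$, $E(x)$ is empty and the output is $2^u$, which is consistent with $\widetilde{c}(2^j)=2^{c(j)}$.

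\textbf{Main obstacle.} The step needing care is the carry loop. One must confirm that the chain $u, S^\A(u),\dots$ stays inside $E(x)$ only for $r\le|E(x)|$ steps, which holds because these values are distinct, and that no value of $c^{-1}$ is ever needed. Only the forward successor $S^\A$ is used; this is exactly why the construction avoids the non-primitive-recursive $c^{-1}$.
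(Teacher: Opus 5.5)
Your proposal is correct and follows essentially the same route as the paper: scan $u, S^\A(u), (S^\A)^2(u),\dots$ for the first exponent missing from the binary expansion of $x$ (within at most $|E(x)|\le x$ steps), then output $x + 2^{u_k} - \sum_{i<k}2^{u_i}$, using only the forward successor $S^\A$ and never $c^{-1}$. Two minor points. Your loop bound $t<\log_2 x+1$ is one step short when $x=1$ and $u=0$, so use the bound $x+1$ that you also state. The check that intermediate values are bounded is unnecessary, since the primitive recursive functions are closed under unbounded primitive recursion.
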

\begin{proof} Clearly, the function $add$ realizes addition of $x$ and $2^{c(j)}$ in the model $\widetilde{\A}$. The idea of the proof will be to simulate binary addition in $\widetilde{\A}$. We will need the elementary relation $bit$, such that $bit(x,i) = true$ if and only if the $i$-th bit in the binary representation of $x$ is $1$ (equivalently, $2^i$ belongs to the binary representation of $x$).

  Given input $x, u$:

  1. Search for the least $k \leq x$, such that $bit(x, (S^\mathcal{A})^k(u)) = false$.

  2. Let $u_i = (S^\mathcal{A})^i(u)$ for $i = 0, 1, \ldots, k$.

  3. Return answer $x + 2^{u_k} - \sum_{i < k} 2^{u_i}$.\newline

  Observe that if we replace $S^\A$ by the standard $S$ (in all places where it is used) we obtain the standard addition of $x$ and $2^u$.
  Indeed, we can scan the binary representation of $x$, searching for the first $0$ digit to the right of $u$ and then change all the scanned $1$-s to $0$-s.
  The greatest possible value of $k$ is the number of $1$-s in the binary representation of $x$, therefore $k \leq x$.

  Since the algorithm works correctly using the standard $S$, it works correctly in the model $\widetilde{\A}$ using the successor $S^\A$.
\end{proof}

\begin{proposition}\label{propSsum} The images $S^{\widetilde{\A}}$ and $+^{\widetilde{\A}}$ of the successor and the addition in the model $\widetilde{\A}$ are primitive recursive.
\end{proposition}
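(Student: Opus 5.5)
Both claims reduce to the function $add$ from the preceding lemma, together with the observation that the binary digits of any element of $\widetilde{\A}$ can be read off and converted into $\A$-elements without computing $c^{-1}$.

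\textbf{Successor.} We have $\widetilde{c}(n+1) = \widetilde{c}(n + 2^0)$, so
$$S^{\widetilde{\A}}(x) = add(x, c(0)),$$
where $c(0)$ is a fixed constant. Since $add$ is primitive recursive, so is $S^{\widetilde{\A}}$.

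\textbf{Addition.} Let $x = \widetilde{c}(n)$ and $y = \widetilde{c}(m)$. Write $m = 2^{j_1} + \dots + 2^{j_r}$ with distinct exponents. By definition of $\widetilde{c}$,
$$y = 2^{c(j_1)} + \dots + 2^{c(j_r)}.$$
So the set bits of $y$ are exactly at positions $u_t = c(j_t)$. Each $u_t$ is an element of $\A$, and these elements can be extracted elementarily from $y$ using $bit$, with $u_t < y$ and $r \le y$. Then
$$x +^{\widetilde{\A}} y = \widetilde{c}(n+m) = add(\cdots add(add(x,u_1),u_2)\cdots,u_r).$$

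This is a bounded iteration of length at most $\log_2 y + 1$. Define $z_0 = x$ and $z_{t+1} = add(z_t, u_{t+1})$, where $u_{t+1}$ is the $(t+1)$-st set bit position of $y$ (if fewer bits remain, the value stays fixed). This is a primitive recursion on $t$ with parameters $x,y$, evaluated at $t=y$. Its correctness follows by induction, since each step adds $2^{j_{t+1}}$ to the $\widetilde{\A}$-number.

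\textbf{Main obstacle.} The only real point is that no primitive recursive bound on intermediate values is needed. The scheme uses only primitive recursion with an already primitive recursive step function $add$, so the composition is primitive recursive automatically. The remaining routine check is that $\widetilde{c}$ is a bijection on $\N$: the nonzero values correspond to finite nonempty subsets of $\N$ mapped via the bijection $c$. This ensures every $y$ really has the form $\widetilde{c}(m)$ and the decoding above is legitimate.
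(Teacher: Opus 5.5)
Your proposal is correct and matches the paper's proof. The successor is $add(x,c(0))$, and addition is the iteration of $add$ over the bit positions $u_t$ of $y$, using at most the binary length of $y$ many steps; your extra remarks (the explicit primitive recursion on $t$, the bijectivity of $\widetilde{c}$, and the case $y=0$) only spell out what the paper leaves implicit.
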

\begin{proof}
  We have the equality $S^{\widetilde{\A}}(x) = add(x,c(0))$, therefore $S^{\widetilde{\A}}$ is primitive recursive.
  As for the sum $x+^{\widetilde{\A}}y$, let us represent $y$ in the form 
   $$y = 2^{u_k} + 2^{u_{k-1}} + \ldots + 2^{u_0}$$
  (clearly $x+^{\widetilde{\A}}y = x$ when $y = 0$). Then we can iterate the function $add$ in the following way:
  $$ x+^{\widetilde{\A}}y \;=\; add(\ldots add(add(x, u_0),u_1) \ldots u_k). $$
  The number of iterations is at most (the binary length of) $y$.
\end{proof}

By a very similar analysis, it can be shown that whenever the ordering $<^\A$ is primitive recursive, the ordering $<^{\widetilde{\A}}$ is also primitive recursive.

\begin{proposition}\label{main1} Let $\A$ be the model, constructed in the beginning of Section \ref{sec contribution 1a technical}. Then $S^{\widetilde{\A}}$ and $+^{\widetilde{\A}}$ are primitive recursive, but for any primitive recursive $f$, such that $f(x) \geq x^2$ for all sufficiently large $x$, its image $f^{\widetilde{\A}}$ is not primitive recursive.
\end{proposition}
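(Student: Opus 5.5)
The first half of the statement is simply Proposition~\ref{propSsum}, since $\A$ is a punctual copy of $\Std$. For the second half I will reduce to Proposition~\ref{prop2x}, following the pattern of Proposition~\ref{propsq}.

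Suppose $f$ is primitive recursive with $f(x)\ge x^2$ for almost all $x$, and suppose $f^{\widetilde{\A}}$ is primitive recursive. I define a function $g^\A$ on $\A$ as follows. On input $x$, compute $z = f^{\widetilde{\A}}(2^x)$. Here $2^x = \widetilde{c}(2^p)$ with $p = c^{-1}(x)$, so $z = \widetilde{c}(f(2^p))$.

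The value $z$ is a sum of powers $2^{u}$ with $u\in\Nat$, and each such $u$ equals $c(i)$ for some $i$. Let $u$ be the exponent whose $\A$-number $i$ is maximal, i.e.\ the $<^\A$-largest bit of $z$. By Lemma~\ref{ineq}, $<^\A$ is primitive recursive, so $u$ can be found primitive recursively from $z$ by scanning its finitely many bits. Set $g^\A(x)=u$. Then $g^\A$ is primitive recursive.

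Its preimage is $g(p)=\lfloor \log_2 f(2^p)\rfloor$. This holds because the $\widetilde{\A}$-number of $z$ is $f(2^p)=\sum 2^{i}$ over the $\A$-numbers $i$ of its bits, and the largest $i$ is the floor of the binary logarithm.

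$g$ is primitive recursive, directly from the formula, since $f$ is. Since $f(2^p)\ge 2^{2p}$ for large $p$, we get $g(p)\ge 2p$ for all sufficiently large $p$. Proposition~\ref{prop2x} then says $g^\A$ is not primitive recursive, which is a contradiction.

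The main point to verify carefully is that $g^\A$ is primitive recursive. This needs two facts:
\begin{itemize}
\item the map $x\mapsto 2^x$ is the correct, elementary way to name $\widetilde{c}(2^{c^{-1}(x)})$ without knowing $c^{-1}(x)$;
\item the maximum in the $\A$-ordering among the bits of $z$ is computable, which uses the primitive recursiveness of $<^\A$ from Lemma~\ref{ineq}.
\end{itemize}
The remaining steps are routine. One small detail: $f(2^p)>0$ for large $p$, so $z\neq 0$ and a largest bit exists; for the finitely many exceptional inputs $g^\A$ can be defined arbitrarily.
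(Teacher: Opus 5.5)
Your proposal is correct and follows essentially the same route as the paper. The paper likewise gets the first half from Proposition~\ref{propSsum}. It then defines $g^\A(x)$ as the $<^\A$-largest exponent in the binary expansion of $f^{\widetilde{\A}}(2^x)$, using Lemma~\ref{ineq}, and derives a contradiction with Proposition~\ref{prop2x} from $g(p)\ge 2p$. Your explicit formula $g(p)=\lfloor\log_2 f(2^p)\rfloor$ and your handling of the case $z=0$ just make precise what the paper leaves implicit.
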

\begin{proof} Since $S^\A$ is primitive recursive, it follows from Proposition \ref{propSsum} that $S^{\widetilde{\A}}$ and $+^{\widetilde{\A}}$ are primitive recursive.
  Now let $f$ be primitive recursive, such that $f(x) \geq x^2$ for almost all $x$ and assume $f^{\widetilde{\A}}$ is also primitive recursive. The proof follows the same argument as in Proposition \ref{propsq}.

  We will define a new function $g^\A$ and then we will argue that its preimage $g$ is primitive recursive and $g(p) \geq 2p$ for almost all $p$, which is impossible by Proposition \ref{prop2x}.

  In order to compute $g^\A(x)$: take $y = f^{\widetilde{\A}}(2^x)$ and then give answer $c(n)$, where $2^{c(n)}$ belongs to the binary representation of $y$ and $n$ is maximal.

  Observe that we can write $y = 2^{u_k} + 2^{u_{k-1}} + \ldots + 2^{u_0}$ and we can determine $c(n)$ with maximal $n$ using the relation $<^\A$, which is primitive recursive according to Lemma \ref{ineq}. Thus $g^\A$ is primitive recursive.

  If $x$ has sufficiently large $\A$-number $p$, then $2^x$ has $\widetilde{\A}$-number $2^p$, therefore $y = f^{\widetilde{\A}}(2^x)$ has $\widetilde{\A}$-number $f(2^p) \geq 2^{2p}$. So the maximal $n$, such that $2^{c(n)}$ belongs to the binary representation of $y$ must be greater than $2p$. In other words, $g^\A(x) = c(n)$ has $\A$-number $n \geq 2p$, that is $g(p) \geq 2p$.

  In addition, the $\A$-number $g(p) = n$ of $c(n)$ can be computed trivially from the $\widetilde{\A}$-number $f(2^p)$ of $y = f^\mathcal{C}(2^x)$.
  Since $f$ is primitive recursive, $g$ is also primitive recursive.
\end{proof}

Now we turn to multiplication.

\begin{proposition}\label{propprod}
  Let the image $+^\A$ of addition in the original model $\A$ be primitive recursive.
  Then the image $\cdot^{\widetilde{\A}}$ of multiplication in the model $\widetilde{\A}$ is also primitive recursive.
\end{proposition}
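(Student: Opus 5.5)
The plan is to realize multiplication in $\widetilde{\A}$ by the schoolbook method in base $2$: multiply by each binary digit of the second factor, shift, and add. The only arithmetic on exponents that this needs is the addition of exponents, and that is exactly what $+^\A$ supplies.

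\textbf{Step 1: multiplying a power of two by a power of two.} Take $x=\widetilde{c}(2^{i})=2^{c(i)}$ and $y=\widetilde{c}(2^{j})=2^{c(j)}$. Then
$$\widetilde{c}(2^{i}\cdot 2^{j})=\widetilde{c}(2^{i+j})=2^{c(i+j)}=2^{c(i)+^\A c(j)}.$$
So the element $2^{u}\cdot^{\widetilde{\A}}2^{v}$ is $2^{u+^\A v}$, and it is primitive recursive in $u,v$ because $+^\A$ is primitive recursive by hypothesis.

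\textbf{Step 2: the general case.} Write the inputs as $x=\sum_{r\le k}2^{u_r}$ and $y=\sum_{s\le l}2^{v_s}$, read directly off their binary expansions. Here $u_r=c(i_r)$ and $v_s=c(j_s)$ for the exponents $i_r,j_s$ of the $\widetilde{\A}$-numbers. By distributivity, the $\widetilde{\A}$-number of $x\cdot^{\widetilde{\A}}y$ is $\sum_{r,s}2^{i_r+j_s}$. We therefore compute
$$x\cdot^{\widetilde{\A}}y=\Bigl(\ldots\bigl(0+^{\widetilde{\A}}2^{u_0+^\A v_0}\bigr)+^{\widetilde{\A}}\ldots\Bigr)+^{\widetilde{\A}}2^{u_k+^\A v_l}.$$
This is an iterated sum of the at most $(k+1)(l+1)$ terms $2^{u_r+^\A v_s}$. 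Each addition uses $+^{\widetilde{\A}}$, which is primitive recursive by Proposition \ref{propSsum}. Alternatively, each step can be taken as one application of the primitive recursive function $add(\cdot,u_r+^\A v_s)$ from the preceding lemma, which is the cleaner choice. The degenerate cases are handled separately: if $x=0$ or $y=0$, the answer is $0=\widetilde{c}(0)$.

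\textbf{Step 3: bounding the loop.} The number of iterations is at most the product of the binary lengths of $x$ and $y$, so the loop is bounded and the whole procedure is primitive recursive. I expect this to be the main obstacle. Each $add$-step contains its own unbounded-looking carry search, and we must make sure that the iteration of a primitive recursive function is run over a bounded, primitive-recursively computable index set. Here that index set is the set of pairs of bit positions of $x$ and $y$. Under this bound the composite is an ordinary bounded iteration of primitive recursive functions.

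Correctness then follows from the same principle the paper uses for addition: the algorithm is exactly standard binary multiplication with $S$ and $+$ on exponents replaced by $S^\A$ and $+^\A$. Since it is correct in the standard model, it is correct under the isomorphism $c$ on exponents.
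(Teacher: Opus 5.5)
Your proof is correct and follows the paper's route: you realize the product of two single powers of two by adding exponents with $+^\A$, then accumulate the partial products with $+^{\widetilde{\A}}$ (or $add$). The only difference is bookkeeping. The paper first forms each whole shifted row $prod(x,v_s)=\sum_r 2^{u_r+^\A v_s}$ at once, where no carries arise because these exponents are distinct, and then sums the $l+1$ rows; you instead add all $(k+1)(l+1)$ single-bit products one at a time.
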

\begin{proof}
  Similar to addition, first we define the function $prod : \Nat\times\Nat \to \Nat$, such that
  $prod(x,u) = \widetilde{c}(n \cdot 2^j)$, where $x = \widetilde{c}(n)$ and $u = c(j)$. Let us represent
  $$ n = 2^{i_k} + 2^{i_{k-1}} + \ldots + 2^{i_0}. $$
  Then $n \cdot 2^j = 2^{i_k + j} + 2^{i_{k-1} + j} + \ldots + 2^{i_0 + j}$ and
  \begin{align*} prod(x,u) &= 2^{c(i_k + j)} + 2^{c(i_{k-1} + j)} + \ldots + 2^{c(i_0 + j)}\\
                &= 2^{c(i_k)+^\A c(j)} + 2^{c(i_{k-1}) +^\A c(j)} + \ldots + 2^{c(i_0)+^\A c(j)}
  \end{align*}
  Of course, this implies that $prod(x,u)$ can be computed primitive recursively from $x = 2^{c(i_k)} + 2^{c(i_{k-1})} + \ldots + 2^{c(i_0)}$ and $u = c(j)$
  using the fact that $+^\A$ is primitive recursive.

  And now it remains to iterate $prod$ using $+^{\widetilde{\A}}$ (which is primitive recursive due to Proposition \ref{propSsum}). Let $y = 2^{u_k} + 2^{u_{k-1}} + \ldots + 2^{u_0}$. Then
  $$ x\cdot^{\widetilde{\A}}y \;=\; prod(x, u_0) +^{\widetilde{\A}} prod(x, u_1) +^{\widetilde{\A}} \ldots +^{\widetilde{\A}} prod(x, u_k) $$
  and clearly $x\cdot^{\widetilde{\A}}0 = 0$.
\end{proof}

For the next proposition we will iterate the construction twice.

\begin{proposition}\label{main2} Let $\A$ be the model, constructed in the beginning Section \ref{sec contribution 1a technical}. Then $+^{\widetilde{\widetilde{\A}}}$ and $\cdot^{\widetilde{\widetilde{\A}}}$ are primitive recursive, but for any primitive recursive $f$, such that $f(x) \geq x^{\log_2 x}$ for all sufficiently large $x$, its image $f^{\widetilde{\widetilde{\A}}}$ is not primitive recursive.
\end{proposition}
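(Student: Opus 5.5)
The positive part follows by applying the earlier results twice. Since $S^\A$ is primitive recursive, Proposition~\ref{propSsum} gives that $S^{\widetilde{\A}}$ and $+^{\widetilde{\A}}$ are primitive recursive. Thus $\widetilde{\A}$ is a punctual copy of $\Std$ in which addition is primitive recursive. Applying Proposition~\ref{propSsum} to $\widetilde{\A}$ yields that $S^{\widetilde{\widetilde{\A}}}$ and $+^{\widetilde{\widetilde{\A}}}$ are primitive recursive. Applying Proposition~\ref{propprod} to $\widetilde{\A}$ (whose addition is primitive recursive) yields that $\cdot^{\widetilde{\widetilde{\A}}}$ is primitive recursive. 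I will also record, from the remark after Proposition~\ref{propSsum}, that $<^{\widetilde{\A}}$ is primitive recursive, since $<^\A$ is by Lemma~\ref{ineq}.

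For the negative part I will argue by contradiction, reducing to Proposition~\ref{main1}. Let $\widetilde{c}$ and $\widetilde{\widetilde{c}}$ denote the isomorphisms onto $\widetilde{\A}$ and $\widetilde{\widetilde{\A}}$. Suppose $f$ is primitive recursive with $f(x)\ge x^{\log_2 x}$ eventually, and $f^{\widetilde{\widetilde{\A}}}$ is primitive recursive. I will define $g$ via its image $g^{\widetilde{\A}}$, mimicking the proof of Proposition~\ref{main1} one level up:
\begin{itemize}
\item given $x$ (an element of $\widetilde{\A}$), compute $y=f^{\widetilde{\widetilde{\A}}}(2^x)$;
\item write $y$ in binary as $2^{u_k}+\dots+2^{u_0}$, with each $u_i$ an element of $\widetilde{\A}$;
\item return the $<^{\widetilde{\A}}$-maximal $u_i$.
\end{itemize}
This is primitive recursive because $<^{\widetilde{\A}}$ is. If $x$ has $\widetilde{\A}$-number $p$, then $2^x$ has $\widetilde{\widetilde{\A}}$-number $2^p$. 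Hence $y$ has $\widetilde{\widetilde{\A}}$-number $N=f(2^p)\ge (2^p)^{p}=2^{p^2}$. The leading exponent of $N$ is therefore $g(p)=\lfloor\log_2 f(2^p)\rfloor\ge p^2$. This $g$ is primitive recursive, being computed from $f(2^p)$.

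So $g$ is primitive recursive, $g(p)\ge p^2$ for almost all $p$, and $g^{\widetilde{\A}}$ is primitive recursive. This contradicts Proposition~\ref{main1} applied to $g$.

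I do not expect a serious obstacle. The main point to check is the bookkeeping of the growth bound: $x^{\log_2 x}$ at $x=2^p$ equals exactly $2^{p^2}$, which is what makes the leading exponent at least $p^2$. I also need Proposition~\ref{main1} to apply as stated, that is, to any primitive recursive function eventually at least $x^2$, which it does.
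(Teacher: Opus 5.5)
Your proof is correct and follows the paper's argument. The positive part comes from applying Propositions~\ref{propSsum} and~\ref{propprod} twice. The negative part evaluates $f^{\widetilde{\widetilde{\A}}}$ at $2^x$ and reads off from the binary expansion a primitive recursive $g$ with $g(p)\ge p^2$ whose image on $\widetilde{\A}$ is primitive recursive, which contradicts Proposition~\ref{main1}. The only difference is that you extract the largest exponent using $<^{\widetilde{\A}}$, whereas the paper extracts $\widetilde{c}$ of the sum of the exponents using $+^{\widetilde{\A}}$. The paper's route rests on Proposition~\ref{propSsum}, which is already proved. Yours rests on the paper's unproved remark that the ordering lifts from $\A$ to $\widetilde{\A}$; that remark does hold, by comparing the $<^{\A}$-largest element of the symmetric difference of the two exponent sets.
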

\begin{proof} Let us denote $\B = \widetilde{\A}$ and $\C = \widetilde{\B} = \widetilde{\widetilde{A}}$. We know that $S^\B$ and $+^\B$ are primitive recursive, so by Propositions \ref{propSsum} and \ref{propprod},
  $S^\C$, $+^\C$ and $\cdot^\C$ are primitive recursive.

  Now let $f$ be primitive recursive, such that $f(x) \geq x^{\log_2 x}$ for almost all $x$, and assume $f^\C$ is also primitive recursive.

  We will define $g^\B$ and then we will argue that its preimage $g$ is primitive recursive and $g(p) \geq p^2$, which is impossible by Proposition \ref{main1}.

  To compute $g^\B(x)$: take $y = f^\C(2^x)$ and give answer $\widetilde{c}(i_k + \ldots + i_0)$, where $y = 2^{\widetilde{c}(i_k)} + 
  \ldots 2^{\widetilde{c}(i_0)}$.

  Observe that $\widetilde{c}(i_k + \ldots + i_0) = \widetilde{c}(i_k) +^\B \ldots +^\B \widetilde{c}(i_k)$, which can be computed from the binary representation of $y$ and the primitive recursive function $+^\B$. Thus $g^\B$ is primitive recursive.

  If $x$ has sufficiently large $\B$-number $p$, then $2^x$ has $\C$-number $2^p$, therefore $y = f^\C(2^x)$ has $\C$-number $f(2^p) \geq 2^{p^2}$. So the sum $i_k + \ldots + i_0$ of the exponents in the binary expansion of $f(2^p)$ is greater than $p^2$. In other words, $g^\B(x) = \widetilde{c}(i_k + \ldots + i_0)$ has $\B$-number $i_k + \ldots + i_0 \geq p^2$, that is $g(p) \geq p^2$.

  In addition, the $\B$-number $g(p) = i_k + \ldots + i_0$ can be computed trivially from the $\C$-number $f(2^p)$ of $y = f^\C(2^x)$.
  Since $f$ is primitive recursive, $g$ is also primitive recursive.
\end{proof}

We conclude this section by proving that the same idea does not work for exponentiation.

\begin{proposition}\label{main3}
  Let the image $+^\A$ of addition in the original model $\A$ be primitive recursive.
  Let $p(x) = 2^x$. Then $p^{\widetilde{\A}}$ is primitive recursive if and only if $p^\A$ is primitive recursive.
\end{proposition}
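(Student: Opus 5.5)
The proof rests on the identity $\widetilde{c}(2^n) = 2^{c(n)}$, which holds because the binary expansion of $2^n$ has the single exponent $n$. Consequently, for every $n$,
$$ p^{\widetilde{\A}}(\widetilde{c}(n)) \;=\; \widetilde{c}(2^n), \qquad p^{\A}(c(n)) \;=\; c(2^n). $$
I would prove the two directions separately, each by a direct primitive recursive reduction.

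\emph{($\Rightarrow$)} Assume $p^{\widetilde{\A}}$ is primitive recursive. Given $u = c(n)$, I first form $2^u$, which equals $2^{c(n)} = \widetilde{c}(2^n)$. Applying $p^{\widetilde{\A}}$ gives $\widetilde{c}(2^{2^n})$. Since $2^{2^n}$ is a single power of two, this value equals $2^{c(2^n)}$. Taking the elementary discrete logarithm $\log_2$ of it returns $c(2^n) = p^\A(u)$. Hence $p^\A$ is primitive recursive, as a composition of $x\mapsto 2^x$, $p^{\widetilde{\A}}$ and $\log_2$. This direction does not even need $+^\A$.

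\emph{($\Leftarrow$)} Assume $p^\A$ and $+^\A$ are primitive recursive, and let $x = \widetilde{c}(n)$. I would proceed as follows.
\begin{enumerate}
\item If $x = 0$, then $n = 0$, and I output $\widetilde{c}(1) = 2^{c(0)}$, where $c(0)$ is a fixed constant.
\item Otherwise I read off the binary expansion $x = 2^{u_k} + \ldots + 2^{u_0}$. Here $u_j = c(i_j)$ and $n = 2^{i_k} + \ldots + 2^{i_0}$. The number of terms is bounded by the binary length of $x$, and the $u_j$ are obtained elementarily from $x$ via $bit$.
\item I compute $c(2^{i_j}) = p^\A(u_j)$ for each $j$.
\item I add these values with $+^\A$ to get
$$ c(n) \;=\; p^\A(u_k) +^\A \cdots +^\A p^\A(u_0). $$
This is a bounded iteration, of the same kind as in the proof of Proposition~\ref{propprod}.
\item I output $2^{c(n)}$, which equals $\widetilde{c}(2^n) = p^{\widetilde{\A}}(x)$.
\end{enumerate}
Every step is primitive recursive, so $p^{\widetilde{\A}}$ is primitive recursive.

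Neither direction poses a real obstacle. The only points needing care are, first, the identity $\widetilde{c}(2^m) = 2^{c(m)}$, which is what makes both translations work, and second, the requirement that the bounded iteration of $+^\A$ in the ($\Leftarrow$) direction be organised as a primitive recursion over the list of exponents extracted from $x$. The hypothesis on $+^\A$ is used only in the ($\Leftarrow$) direction.
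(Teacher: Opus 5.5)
Your proposal is correct and follows the paper's proof essentially verbatim. Both directions rest on the identity $\widetilde{c}(2^m)=2^{c(m)}$: ($\Rightarrow$) reads off $c(2^n)$ as the discrete logarithm of $p^{\widetilde{\A}}(2^{c(n)})$, and ($\Leftarrow$) outputs $2^{p^\A(c(i_k)) +^\A \cdots +^\A p^\A(c(i_0))}$ from the binary expansion of $x$. Your explicit handling of the case $x=0$ is a minor detail that the paper leaves implicit.
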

\begin{proof} Let $p^{\widetilde{\A}}$ be primitive recursive. We have the equality $p^{\widetilde{\A}}(2^{c(x)}) = 2^{c(2^x)}$, because the $\widetilde{\A}$-number of $2^{c(x)}$ is $2^x$ and the $\widetilde{\A}$-number of $2^{c(2^x)}$ is $p(2^x)$. So given $c(x)$ we can primitive recursively compute $c(2^x)$, which means that $p^\A$ is primitive recursive.

  Conversely, let $p^\A$ be primitive recursive. Then for $x = 2^{c(i_k)} + \ldots + 2^{c(i_0)}$ we have
  \begin{align*}
   p^{\widetilde{\A}}(x) &\;=\; 2^{c(2^{i_k} \;+\; \ldots \;+\; 2^{i_0})}\\
                    &\;=\; 2^{\left(c(2^{i_k}) \;+^\A\; \ldots \;+^\A\; c(2^{i_0})\right)}\\
                    &\;=\; 2^{\left(p^\A(c(i_k)) \;+^\A\; \ldots \;+^\A\; p^\A(c(i_0))\right)}.
  \end{align*}
  We can compute $c(i_k), \ldots, c(i_0)$ from the binary expansion of $x$ and since $+^\A$ and $p^\A$ are primitive recursive,
  it follows that $p^{\widetilde{\A}}$ is also primitive recursive.  
\end{proof}

\section{Mainland--Island Technique and the Metatheorem}\label{sec contribution 1b technical}

\subsection{Showcasing the Mainland--Island Technique}
\thmPrimes*
\begin{proof}
    For simplicity, the proof is given for the set of primes $\{2,3\}$. Generalization to arbitrary finite set of primes is easy---we briefly discuss it later.
    
    We construct a punctual model $\mathcal B$ and an isomorphism $c: (\mathbb N,S) \to \mathcal B$. At each stage, any element of $\mathcal B$ has a unique mark---a linear (possibly constant) function of the form $dx + e$, where $d = 0$ or $d = 2^u3^v$, for some $u,v,e \in \mathbb N$. The current domain of $c$ is the set of elements with constant functions as marks; these elements will always constitute an initial segment of $\mathbb N$. We maintain that the structure $(range(c), S^\mathcal{B})$ is an  $S^\mathcal{B}$-chain called the mainland and $c$ is the unique isomorphism from $(dom(c), S)$ to $(range(c),S^\mathcal{B})$. Elements in $\mathcal B \setminus range(c)$ are arranged in disjoint $S^\mathcal{B}$-chains (c.f., Figure \ref{fig:chains}) and are called islands. The initial element of an island will always have $2^u3^vx$ as a mark, for some $u,v \in \mathbb N$.

    Let $R_0, R_1, \dots$ be a p.r.\ enumeration of all codes of triples $\langle a,b,n \rangle$ such that $a$ has a prime factor $p \notin \{2,3\}$, and $b,n \in \mathbb N$. Each $R_i = \langle a,b,n \rangle$ stands for the following requirement: $(ax + b)^\mathcal{B} \neq \psi_i(x)$, where $\psi_i$ is the $i$th p.r.\ unary function according to a fixed computable enumeration of such functions. We will satisfy requirements one after another. Each requirement will go through the following states: inactive, active and satisfied. When $R_i = \langle a,b,n \rangle$ becomes active, it is associated with a fresh witness $w$ which starts its own island in $\mathcal B$. The aim is to make $(ax+b)^\mathcal{B}(w) \neq \psi_n(w)$. We wait for a stage $s$ at which $\psi_{n,s} (w)\downarrow$, each time quickly extending the current structure to make the images of $S,2x,3x$ p.r.\ in $\mathcal B$. These extensions give rise to a growing collection of islands. Once we find such an $s$, we run \emph{Connect}---a procedure that satisfies $R_i$ by appropriately merging all the islands into the mainland. The details are given below.

 When we add a new element into $\mathcal B$ it is always the least number currently not in $\mathcal B$ (so-called fresh number).

\begin{figure}
    \centering

\begin{tikzpicture}[scale=0.8]
\foreach \x [count=\i] in {0,2,...,10} {

\ifnum\x>0
    
    \ifnum\x=8
            \draw[loosely dotted, line width=0.5pt] (\x-0.6,0) -- (\x-1.6,0);
    \else
        \draw[<-, line width=0.5pt] (\x-0.6,0) -- (\x-1.6,0);
    \fi 
    \fi
}

\foreach \x [count=\i] in {0,2,...,6} {

    \filldraw[black] (\x,0) circle (0.75pt) node[above] {\pgfmathprint{int( \x / 2)}};

}

    \filldraw[black] (8,0) circle (0.75pt) node[above] {$110$};
    \filldraw[black] (10,0) circle (0.75pt) node[above] {$111$};

\foreach \x [count=\i] in {0,2,...,10} {

    \ifnum\x=0
        \filldraw[black] (\x,-1.5) circle (0.75pt) node[above] {$x$};
    \else
        \filldraw[black] (\x,-1.5) circle (0.75pt) node[above] {$x + $\pgfmathprint{int( \i - 1 )}};
    \fi

    \ifnum\x>0
        \draw[<-, line width=0.5pt] (\x-0.6,-1.5) -- (\x-1.6,-1.5);
    \fi
}

\foreach \x [count=\i] in {0,2,...,10} {
    \ifnum\x=0
        \filldraw[black] (\x,-3) circle (0.75pt) node[above] {$2x$};
    \else
    \filldraw[black] (\x,-3) circle (0.75pt) node[above] {$2x + $\pgfmathprint{int( \i - 1 )}};
    \fi

    \ifnum\x>0
        \draw[<-, line width=0.5pt] (\x-0.6,-3) -- (\x-1.6,-3);
    \fi
}

\foreach \x [count=\i] in {0,2,...,10} {
        \ifnum\x=0
        \filldraw[black] (\x,-4.5) circle (0.75pt) node[above] {$3x$};
    \else
            \filldraw[black] (\x,-4.5) circle (0.75pt) node[above] {$3x + $\pgfmathprint{int( \i - 1 )}};
    \fi

    \ifnum\x>0
        \draw[<-, line width=0.5pt] (\x-0.6,-4.5) -- (\x-1.6,-4.5);
        
    \fi
}

\end{tikzpicture}

    \caption{Visualization of the mainland (top chain) and three islands with corresponding marks. Arrows correspond to $S^\mathcal B$.}
    \label{fig:chains}
\end{figure}
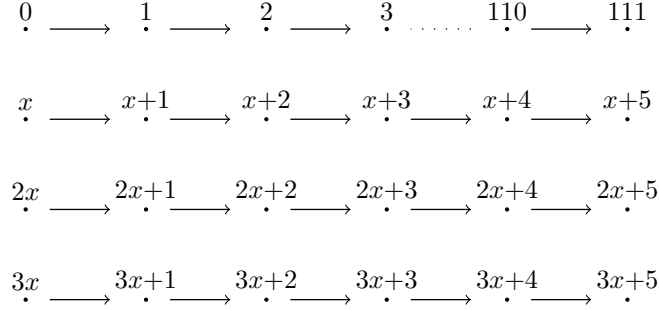

\paragraph*{Construction.}
    At stage $0$, $\mathcal B$ consists of $0$ with mark $0$, $S^\mathcal B =\emptyset$ and $c(0)=0$, so the mainland contains just one element $0$. All requirements are inactive. 
    
    Let $s > 0$. First, we run \emph{Extend} (see below) and obtain $\mathcal B'$.
    
    If no requirement is active, let $i$ be the least number such that $R_i$ is inactive. $R_i$ becomes active. We add a fresh number $w$ to $\mathcal B'$. $w$ becomes a witness for $R_i$ and receives a mark $x$. We set $\mathcal B_s = \mathcal B'$ and go to the next stage. 

    If a requirement is active, let $R_i = \langle a,b,n \rangle$ be the one and let $w$ be its witness. If $\psi_{n,s}(w)\!\uparrow$, we set $\mathcal B_s = \mathcal B'$ and go to the next stage. Otherwise we execute \emph{Connect}$(\mathcal B', \psi_{n,s}(w))$. This yields a model that becomes $\mathcal B_s$. We go to the next stage.

    \paragraph*{$Extend$} We are given $\mathcal B,c,M(\mathcal B)$. We need $\mathcal B' \supset \mathcal B$, $c' \supset c$ and $M(\mathcal B')$ with the following properties:
    \begin{enumerate}[label=\arabic*.]
        \item $M(\mathcal B') \supset M(\mathcal B)$,
        \item If $dx+e \in M(\mathcal B)$, then $dx+e+1, 2(dx+e),3(dx +e) \in M(\mathcal B')$
        \item If $dx + e, dx+e'\in M(\mathcal B')$ and $e < e'$ then $dx + e'' \in M(\mathcal B')$, for every $e'' \in \mathbb N$ such that $e < e'' < e'$.

    \end{enumerate}
We take the least set of marks that satisfy the above conditions---this is $M(\mathcal B')$. We extend $\mathcal B$ to $\mathcal B'$ such that $|\mathcal B'| = |M(\mathcal B')|$ and declare that for every $b,b' \in \mathcal B'$, $S^{\mathcal B'}(b) = b'$ iff, for some $d,e \in \mathbb N$, the mark of $b$ is $dx+e$ and the mark of $b'$ is $dx+e + 1$.

    \paragraph*{$Connect(\mathcal B',m).$} Suppose that the current requirement has been active for $t$ stages $s_0, s_1, \dots, s_t$, where $s_t$ is the current stage. The number of islands in $\mathcal B'$ is at most $\Sigma _{j=0}^{t} (j+1)$, because at stage $s_j$ we add new islands with marks $2^u3^v x$ such that $u+v = j$ (cf. \emph{Extend}). Clearly, this gives us a uniform primitive recursive bound on the number of islands at any given stage. Let the number of islands in $\mathcal B'$ be $k$.

    Use a primitive recursive algorithm to arrange the islands in the linear order $I_1, I_2, \dots, I_k$ as follows: an island with mark $dx$ comes before an island with mark $d'x$ iff $d < d'$. Let $I_0$ be the mainland.

    Our aim is to satisfy $R_i$ by carefully connecting all the islands 

\begin{center}
\begin{tikzpicture}
\draw (6, 1) -- (7, 1) node[style={midway},above] {$I_k$};
\draw (0, 1) -- (1, 1) node[style={midway},above] {$I_0$};
\draw (1.5, 1) coordinate (w) -- (2.5, 1) node[style={midway},above] {$I_1$};
\draw (3, 1) -- (4, 1) node[style={midway},above] {$I_2$};
\draw (4.5, 1) -- (5.5, 1) node[style={midway},above] {$I_3$};

\fill (w) circle (1pt);
\node[below] at (w) {$w$};
\end{tikzpicture}

\end{center}
to one mainland
  \begin{center}
  \begin{tikzpicture}
 \draw[gray] (0,0) -- (9,0);
\draw (9,0) -- (10,0)  node[style={midway},above] {$I_k$};
\draw (0,0) -- (1,0) node[style={midway},above] {$I_0$};
\draw (1.5,0) coordinate (w) -- (2.5,0) node[style={midway},above] {$I_1$};
\draw (3.5,0) -- (4.5,0) node[style={midway},above] {$I_2$};
\draw (6,0) -- (7,0) node[style={midway},above] {$I_3$};

\fill (w) circle (1pt);
\node[below] at (w) {$w$};

    \end{tikzpicture}
\end{center}
so that islands do not overlap and $a c^{-1}(w) + b \neq c^{-1}(m)$.

We need to find the right value for $c^{-1}(w)$. Let us denote it by $q$ for short. Once we find $q$ we may execute the procedure described in the paragraph below.

$Finish(q)$. We extend $S^{\mathcal B'}$, and possibly its domain, so that $[0,|\mathcal B'|)$ is the least initial segment of $\mathbb N$ such that
 $\mathcal B'$ is a single $S^{\mathcal B'}$-chain and, for every $dx+e \in M(\mathcal B')$, $c$ maps $d q + e$ to the element with mark $dx+e$, where $c$ is the unique isomorphism from $([0,|\mathcal B'|),S)$ to $\mathcal B'$. Notice that the extended $c$ maps $q$ to $w$, as $w$ has mark $x$.
We declare that $\mathcal B_s$ is the obtained structure, $c$ is the unique isomorphism described above and $M(\mathcal B_s) = dom(c)$, i.e., every element has a constant function as its mark.
    
Let us find $q$. Let $d_i x$ and $d_i x + e_i$ be the marks of the first and of the last element of $I_i$, respectively.

    Set $q' := |I_0|$. Execute the bounded loop for $i = 1,2,\dots,k-1$: $q' := max \{q', \lceil  \frac{e_i}{d_{i+1}-d_i} \rceil + 1 \}$. $\frac{e_i}{d_{i+1}-d_i}$ is just the solution of $d_i x + e_i = d_{i+1}x$. If we executed $Finish(q')$, we would guarantee that islands do not overlap, because $q'$ satisfies the set of inequalities $d_i x + e_i < d_{i+1}x$, for $i = 1,2,\dots, k-1$. To satisfy $R_i$, we may need to change $q'$ in certain cases.

\begin{enumerate}[label=\arabic*.]
    \item Case $m \in I_0$. Execute $Finish(q')$. $R_i$ is satisfied because $c^{-1}(m) < c^{-1}(w) = q < aq + b$.

    \medskip

    \begin{tikzpicture}

\draw[gray] (0,0) -- (9,0);
\draw (9,0) -- (10,0)  node[style={midway},above] {$I_k$};
\draw (0,0) -- (1,0) node[style={midway},above] {$I_0$};
\draw (1.5,0) coordinate (w) -- (2.5,0) node[style={midway},above] {$I_1$};
\draw (3.5,0) -- (4.5,0) node[style={midway},above] {$I_2$};
\draw (6,0) -- (7,0) node[style={midway},above] {$I_3$};

\draw (0.5,0) coordinate (m);
\fill (m) circle (1pt);
\node[below] at (m) {$m$};

\fill (w) circle (1pt);
\node[below] at (w) {$w$};

\end{tikzpicture}

\item Case $m \notin I_0 \cup I_1 \cup \dots \cup I_k$. Set $c(|I_0|) = m$ and $q = max(q', |I_0|)$. Call $Finish(q)$. $R_i$ is satisfied because $c^{-1}(m) < c^{-1}(w) = q < aq + b$.

\begin{tikzpicture}

\draw[gray] (0,0) -- (9,0);
\draw (9,0) -- (10,0)  node[style={midway},above] {$I_k$};
\draw (0,0) -- (1,0) node[style={midway},above] {$I_0$};
\draw (1.5,0) coordinate (w) -- (2.5,0) node[style={midway},above] {$I_1$};
\draw (3.5,0) -- (4.5,0) node[style={midway},above] {$I_2$};
\draw (6,0) -- (7,0) node[style={midway},above] {$I_3$};

\draw[gray] (1.1,0) coordinate (m);  
\fill (m) circle (1pt);
\node[below] at (m) {$m$};

\fill (w) circle (1pt);
\node[below] at (w) {$w$};

\end{tikzpicture}

\item Case $m \in I_j, 1 \leq j \leq k, a < d_j$. Let the mark of $m$ be $d_jx + e$. Compute $q= max(q',\lceil \frac{b-e}{d_j - a}\rceil+1)$. Notice that $\lceil \frac{b-e}{d_j - a}\rceil$ is the least integer satisfying $ax + b \leq d_jx + e$. Call $Finish(q)$. $R_i$ is satisfied because $aq + b < d_j q + e = c^{-1}(m)$.

\item Case $m \in I_j, 1 \leq j \leq k, a > d_j$. Let the mark of $m$ be $d_jx + e$. Compute $q= max(q',\lceil \frac{e-b}{a - d_j}\rceil+1)$. Notice that $\lceil \frac{e-b}{a - d_j}\rceil$ is the least integer satisfying $ax + b \geq d_jx + e$. Call $Finish(q)$. $R_i$ is satisfied because $aq + b > d_j q + e = c^{-1}(m)$.

\end{enumerate}
Notice that the case $m \in I_j, 1 \leq j \leq k, a = d_j$ is not possible, because $a$ has a prime factor $p \notin \{2,3\}$, while $d_j = 2^u3^v$, for some $u,v \in \mathbb N$.
    
\paragraph*{Verification.} Let us argue that $\mathcal B$ is a punctual structure. Clearly, $\mathbb N$ is the domain of $\mathcal B$ since at each stage we properly extend the current $\mathcal B$ by least numbers not in the current $\mathcal B$. To compute $S^\mathcal{B}(a)$, for $a \in \mathbb N$, we need to compute $\mathcal B_{a+1}$ since $a$ is already present in $\mathcal B_a$ and if $S^\mathcal{B}(a)$ is not defined in $\mathcal B_a$, it will be defined in $\mathcal B_{a+1}$ (cf. \emph{Extend}). $\mathcal B_{a+1}$ can be computed by a loop bounded by $a+1$ that computes $\mathcal B_0, \mathcal B_1, \dots, \mathcal B_{a+1}$ in succession. We have already explained in the construction that operations performed at a given stage are executed by a primitive recursive algorithm. 

In a similar way, we argue that the images of $2x$ and $3x$ in $\mathcal B$ are p.r. For let $a \in \mathbb N$. Suppose we want to compute $(2x)^\mathcal{B}(a)$. We compute $\mathcal B_{a+1}$. Clearly, $a \in \mathcal B_{a+1}$. Let $dx + e$ be the mark of $a$ in $\mathcal B_{a+1}$. Compute $\mathcal B'$ at stage $a+1$ (i.e., we call \emph{Extend}). Let $b$ be the element that has the mark $2(dx+e)$. We claim that $b$ is the value we want, namely $(2x)^\mathcal{B}(a)$. 

Suppose $a$ is in the mainland at the beginning of stage $a+1$, i.e. its mark is actually $e$ ($d = 0$). In \emph{Extend}, we add the mark $2e$, if its not already there. By the definition of $\mathcal B'$, it must be the case that the number of $b$ is twice as the number of $a$. Now suppose that $a$ is not in the mainland at the beginning of stage $a+1$. At some stage we will execute \emph{Connect}. There, we set $c(dq+e) = a$ and $c(2(dq+e)) = b$ and, therefore, $b$ is as required. A similar argument works for $3x$.
\end{proof}

\subsection{Metatheorem}
First, we introduce the technical definitions needed to formulate the main result of this subsection (\cref{theo:nonstandardness-meta} below) that gives sufficient conditions for constructing a punctually nonstandard copy of $\Std = (\mathbb{N},\Succ)$ while preserving primitive recursiveness of a class of functions $\mathcal{F}$. Intuitively speaking, these conditions can be isolated via a careful analysis of the mainland--island technique and the peculiarities of the Levitz's class $\LL$ (recall \cref{def:Levitz}). More precisely, an analysis of the Levitz's paper~\cite{Levitz} (to be elaborated in Subsection~\ref{subsect:Levitz}) hints on the following: every function $f\in \LL$ admits a `nice' normal form, and we capture this feature of $\LL$ via \cref{{def:norm-form}} below. 


Let $f$ and $g$ be unary primitive recursive functions. We say that $g$ \emph{eventually dominates} $f$ (denoted by $f \preccurlyeq g$) if there exists $n_0\in\mathbb{N}$ such that
\begin{equation}\label{equ:domination-point}
    (\forall x \geq n_0) (f(x) \leq g(x)).
\end{equation}
We call an arbitrary number $n_0$ satisfying Eq.~(\ref{equ:domination-point}) an \emph{$(f,g)$-domination witness}. If $n_0$ additionally satisfies 
$(\forall x \geq n_0) (f(x) < g(x))$,
then we say that $n_0$ is a \emph{strict $(f,g)$-domination witness}.


    Let $\mathcal{Q} = \{ q_i(x_1,x_2,\dots, x_{r_i})\}_{i\in\mathbb{N}}$ be a uniform family of primitive recursive functions. Let $T_1(\mathcal{Q})$ be the set of all unary functions belonging to the compositional closure of $\mathcal{Q} \cup \{ I^{n}_m : 1\leq m \leq n\}$,
    where
    $I^{n}_m(x_1,\dots,x_n) = x_m$.

    We say that a set of functions $\mathcal{F} \subseteq T_1(\mathcal{Q})$ is \emph{listable} if there exists a computable list $(t_i(x))_{i\in\mathbb{N}}$ of terms in the signature $\mathcal{Q}$ such that $\mathcal{F}$ is equal to the set of all functions that are defined by the terms $t_i(x)$, $i\in\mathbb{N}$.

    \begin{remark}
    Notice that, in general, two different terms can define the same function: for example, for $\mathcal{Q} = \{ +, \cdot\}$ consider the terms
    $t_1(x) = (x+x) \cdot x$ and $t_2(x) = x\cdot x + x\cdot x$.
    \end{remark}

    
    In the definition below, `$L$' stands for both `listable' and `linearly ordered'. 

    \begin{definition}\label{def:L-fam}
        We say that a pair $(\mathcal{Q},\mathcal{F})$ is an \emph{$L$-family} if the following conditions are satisfied:
        \begin{enumerate}
            \item $\mathcal{F} \subseteq T_1(\mathcal{Q})$ is a listable family of unary functions.

            \item The family $\mathcal{F}$ contains the successor $\Succ(x)$ and $I^1_1(x) = x$.

            \item The family $\mathcal{F}$ is closed with respect to composition, i.e., if $f(x)$ and $g(x)$ both belong to $\mathcal{F}$, then $g(f(x))\in \mathcal{F}$. 

            \item The set of functions $\mathcal{F}$ is linearly ordered with respect to the domination order $\preccurlyeq$.

            \item Every function $f(x)$ from $\mathcal{F}$ is strictly increasing: i.e., if $x< y$, then $f(x) < f(y)$. 
        \end{enumerate}
    \end{definition}

    \begin{definition}\label{def:norm-form}
    Let $(\mathcal{Q},\mathcal{F})$ be an \emph{$L$-family}. 
    We say that a primitive resursive set $T \subseteq \mathbb{N}$ is a \emph{set of normal forms} for  $(\mathcal{Q},\mathcal{F})$ if the following conditions are satisfied:
    \begin{enumerate}
        \item Given an arbitrary term $t(x)$ (in the signature $\mathcal{Q}$) that defines a function from $\mathcal{F}$, one can primitively recursively compute a value $N(t) \in T$.

        \item There exists a primitive recursive linear order $\leq^T$ on $T$ such that for arbitrary terms $t_1(x)$ and $t_2(x)$ defining functions from $\mathcal{F}$, we have: 
        \begin{enumerate}
            \item $t_1(x) \preccurlyeq t_2(x)$ if and only if $N(t_1) \leq^T N(t_2)$. 

            \item If $t_1(x) \prec t_2(x)$, then one can primitively recursively find a strict $(t_1,t_2)$-domination witness.    
        \end{enumerate}
    \end{enumerate}

    \end{definition}

The main result of this section is the following:

\theoMeta*

\begin{proof}
    Our construction will produce a punctual structure $\mathcal{A}$ isomorphic to $\Std = (\mathbb{N},\Succ)$ with the following properties:
    \begin{itemize}
        \item for every function $f\in  \mathcal{F}$, its representation $f^{\mathcal{A}}$ is primitive recursive,

        \item the ordering $\leq^{\mathcal{A}}$ is primitive recursive,

        \item the representation of the predecessor function $\Pred^{\mathcal{A}}$ is \emph{not} primitive recursive (hence, the structure $(\mathbb{N},S^{\mathcal{A}})$ is punctually nonstandard). 
    \end{itemize}

    We will satisfy the following requirements: for $e\in\mathbb{N}$,
    \begin{description}
        \item[$\mathcal{R}_e$:] $\Pred^{\mathcal{A}} \neq \psi_e$, where $\psi_e(x)$ is the $e$-th unary primitive recursive function.
    \end{description}

    First, we establish a useful fact about listable families:    

    \begin{lemma}\label{lem:prim-rec-list-0}
        Let $\mathcal{F} \subseteq T_1(\mathcal{Q})$ be a listable family, and let $(t_i(x))_{i\in\mathbb{N}}$ be a computable list of terms that lists $\mathcal{F}$. Then there exists a primitive recursive list of terms $(t'_s(x))_{s\in\mathbb{N}}$ such that $(t'_s(x))_{s\in\mathbb{N}}$ lists $\mathcal{F}$.
    \end{lemma}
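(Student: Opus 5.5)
This is the standard "delay by time" trick for converting a computable enumeration into a primitive recursive one. Since $\mathcal{F}$ is listable, there is a computable list $(t_i(x))_{i\in\mathbb{N}}$ of terms, say computed by a Turing machine with index $e$. The relation ``the computation of $t_i$ converges within $s$ steps with output $u$'' is primitive recursive (indeed elementary), via the Kleene predicate $\mathcal{T}_1$ from Section~\ref{sec prelim}.

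We may assume $\mathcal{F}$ is nonempty. By Definition~\ref{def:L-fam}(2), an $L$-family contains the identity, so the fixed term $x$ (the projection $I^1_1$) is a term defining a member of $\mathcal{F}$. Call it $t_{\mathrm{def}}(x)$. More generally, any fixed term with index, say $t_0$, can be used once its value is hard-coded into the construction. This works without assuming that the first term is quickly computable, because a single fixed term is a constant as far as primitive recursion is concerned.

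Define $t'_s$ as follows. Decode $s = \pair{i,k}$. If the computation of $t_i$ converges within $k$ steps, let $t'_s$ be its output term. Otherwise let $t'_s$ be $t_{\mathrm{def}}$, for instance the term $t_0$, whose code is a fixed number.

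This map $s\mapsto t'_s$ is primitive recursive. It only requires decoding the pair, checking the elementary predicate $\mathcal{T}_1$ with a bounded search of at most $k$ steps, and extracting the output.

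Next we check that $\{t'_s\}$ lists exactly $\mathcal{F}$. Each $t'_s$ is either some $t_i$ or the default term, which itself defines a function in $\mathcal{F}$, so every listed term defines a member of $\mathcal{F}$. Conversely, each $t_i$ converges in some finite number of steps $k$. Then $t'_{\pair{i,k}} = t_i$, so every member of $\mathcal{F}$ appears. The set of functions defined by the new list therefore coincides with $\mathcal{F}$.

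There is no real obstacle here. The only point needing care is the choice of default term, which must itself define a function in $\mathcal{F}$. This is exactly why we use a member of the original list (or the identity term guaranteed by the $L$-family axioms) rather than an arbitrary term.
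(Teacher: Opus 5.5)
Your proof is correct and is essentially the paper's argument: both hard-code $t_0$ as the default term, and both output $t_i$ only once its computation has been seen to converge within the available step bound. The only difference is bookkeeping. You dovetail over pairs $s=\pair{i,k}$, whereas the paper keeps a counter $p(s)$ and waits for $t_{p(s-1)}$ to converge within $s$ steps, which lists the $t_i$ in their original order. Also, your appeal to Definition~\ref{def:L-fam}(2) is unnecessary, since the list is indexed by all of $\mathbb{N}$, so $t_0$ always exists.
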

    \begin{proof}
        For $s\in\mathbb{N}$, we define the term $t'_s(x)$ and an ancillary value $p(s)$ by recursion. We put $t'_0(x) = t_0(x)$ and $p(0) = 1$.

        For $s\geq 1$, if the term $t_{p(s-1)}(x)$ can be computed in at most $s$ computational steps, then we put $t'_s(x) = t_{p(s-1)}(x)$ and $p(s) = p(s-1)+1$. Otherwise, set $t'_s(x) = t_0(x)$ and $p(s) = p(s-1)$.
    \end{proof}
    
    By \cref{lem:prim-rec-list-0}, we choose a primitive recursive list of terms $(t_i(x))_{i\in\mathbb{N}}$ that lists our family~$\mathcal{F}$. 


    For given $e,s\in\mathbb{N}$ we can primitively recursively compute the following finite partial structure $\mathcal{M}{[e,s]}$. The signature of $\mathcal{M}[e,s]$ is 
    $\sigma_e = \{\Succ, \leq, t_0,t_1,\dots,t_e\}$.
         
    We put $\mathcal{M}[e,0] = \{0\}$. Intuitively speaking, $\mathcal{M}[e,s+1]$ is obtained from $\mathcal{M}[e,s]$ by applying all the functions from $\sigma_e$, plus taking the downwards closure in $\mathbb{N}$.
    More formally, the partial structure $\mathcal{M}{[e,s+1]}$ is defined as follows:    
    \begin{itemize}    
        \item The domain of $\mathcal{M}[e,s+1]$ contains all numbers $a\in\mathbb{N}$ such that
        $a\leq f(b)$, for some $b\in\operatorname{dom}(\mathcal{M}[e,s])$ and $f \in \sigma_e$.

        \item The ordering $\leq^{\mathcal{M}[e,s+1]}$ is the standard ordering induced from $\mathbb{N}$.

        \item For a function $f\in\sigma_e$ and $x\in \operatorname{dom}(\mathcal{M}[e,s])$, we have
        $f^{\mathcal{M}[e,s+1]}(x) = f(x)$.

        \item For $f\in\sigma_e$ and $x\in \operatorname{dom}(\mathcal{M}[e,s+1]) \setminus \operatorname{dom}(\mathcal{M}[e,s])$, the value $f^{\mathcal{M}[e,s+1]}(x)$ is undefined.
    \end{itemize}


    The desired punctual structure $\mathcal{A} \cong (\mathbb{N},\Succ)$ is built in stages: at a stage $s$ we construct a finite structure $\mathcal{A}_s$. We will have 
    $\operatorname{card}(\mathcal{A}_{s+1}) > \operatorname{card}(\mathcal{A}_s) \geq s+2$.
    We assume that $\mathcal{A}_0$ contains numbers $0$ and $1$, and $\Succ^{\mathcal{A}}(0) = 1$.

    At a given stage $s$, let $e(s)$ be the least index of a requirement $\mathcal{R}_{e(s)}$ that is not satisfied yet. We will also define an ancillary value $\theta(s)\in\mathbb{N}$ such that $\theta(s+1) > \theta(s)$.
    
    We will construct an isomorphic embedding $c[s]$ from the signature-reduct $\mathcal{M}[e(s),\theta(s)]\upharpoonright \{\Succ\}$ into the constructed finite $\mathcal{A}_s$. Sometimes we abuse our notations and we identify the reduct $\mathcal{M}[e(s),\theta(s)]\upharpoonright \{\Succ\}$ with $\mathcal{M}[e(s),\theta(s)]$.

    \begin{convention}
    The substructure 
    $\widehat{\mathcal{M}}[s] = (\operatorname{range}(c[s]),\Succ^{\mathcal{A}_s})$
    is called the \emph{mainland} (at the stage $s$), and the substructure 
    $\operatorname{Ar}[s] = \mathcal{A}_s \setminus \widehat{\mathcal{M}}[s]$
    is called the \emph{archipelago}. 
    
    We always assume that every element $x$ from the mainland is strictly $<^{\mathcal{A}}$-less than all elements from the archipelago.
    \end{convention}

    We note the following property of the mainland: if $x \in \widehat{\mathcal{M}}[s]$, then the element $\Succ^{\mathcal{A}}(x)$ belongs to $\widehat{\mathcal{M}}[s+1]$.

    We will ensure the following: if $x\in \mathcal{A}_s$, then there exists a stage $s'\geq s$ such that $x\in\widehat{\mathcal{M}}[s']$. That is, every element $x$ eventually joins the mainland. This property will guarantee that the constructed structure $\mathcal{A} = \bigcup_{s\in\mathbb{N}}\mathcal{A}_s$ is isomorphic to $(\mathbb{N},\Succ)$ via the map $c = \bigcup_{s\in\mathbb{N}} c[s]$.

    Whenever a fresh element $x$ is added into $\mathcal{A}_s$, $x$ is chosen as the least element currently not in $\mathcal{A}_s$.

\subparagraph*{Strategy for Requirement $\mathcal{R}_e$.}

    Suppose that $\mathcal{R}_e$ starts working at a stage $s_0\geq 1$.

    Firstly, choose a fresh witness $w$. The witness $w$ becomes an element of the (current) archipelago, so at stage $s_0$ it is not connected via the function $\Succ^{\mathcal{A}}$ to the mainland.

    Wait for a stage $s' > s_0$ such that $\psi_{e,s'}(w) \!\downarrow$. While waiting, we construct $\mathcal{A}_s$ via the extension procedure $\texttt{Extend}(e,s)$ described below. Here we note that $w$ stays not $\Succ^{\mathcal{A}}$-connected to the mainland. 

    In the procedure $\texttt{Extend}(e,s)$, we define an ancillary parameter $\beta(s)\in\mathbb{N}$. This parameter will help us dealing with the growth of the archipelago (to be elaborated below). We put $\beta(s_0) = s_0$.

    Suppose that $\psi_{e,s'}(w) \!\downarrow\ = y$. Then by Eq.~(\ref{equ:max}), we have $y < s'$ and hence, $y\in \mathcal{A}_{s'-2}$: indeed, note that $\operatorname{card}(\mathcal{A}_{s'-2}) \geq s'$ and thus, $\{0,1,\dots,s'-1\} \subseteq \operatorname{dom}(\mathcal{A}_{s'-2})$. Hence, $\Succ^{\mathcal{A}}(y) \in \mathcal{A}_{s'-1}$  and 
    $\Succ^{\mathcal{A}}(y) \neq w$. 
    Indeed, this is a consequence of the following properties:
    \begin{itemize}
        \item In the structure $\mathcal{A}_{s'-1}$ the element $w$ is not connected to the mainland, and thus, $w\neq \Succ^{\mathcal{A}}(z)$ for all $z\in\widehat{\mathcal{M}}[s'-1]$.

        \item If $z \in \operatorname{Ar}[s'-1] = \mathcal{A}_{s'-1} \setminus \widehat{\mathcal{M}}[s'-1]$, then (by the description of the extension procedure $\texttt{Extend}(e,s)$ below) there exists some term $q(x)$ in the signature $\sigma_e$ such that $z = q^{\mathcal{A}}(w)$ and $q(x)\in\mathcal{F}$. Since every function from the family $\mathcal{F}$ is strictly increasing (Item~(5) of Definition~\ref{def:L-fam}), we deduce that $w \leq^{\mathcal{A}} z$ and $w\neq \Succ^{\mathcal{A}}(z)$.
    \end{itemize}

    So, we have $\Succ^{\mathcal{A}}(\psi_e(w)) \neq w$, and thus we deduce that $\psi_e(w) \neq \Pred^{\mathcal{A}}(w)$. Therefore, the requirement $\mathcal{R}_e$ is (forever) satisfied. 

    At the end of the stage $s'$ we connect the current archipelago to the mainland via the connection procedure $\texttt{Connect}(e,s')$ described below. After that, we can safely move to the next unsatisfied requirement.

\subparagraph*{Procedure $\texttt{Extend}(e,s)$.} 
    Firstly, we put $\theta(s) = \theta(s-1) + 1$, and we use fresh numbers to produce the following extensions:
    \begin{itemize}
        \item the mainland $\widehat{\mathcal{M}}[s-1]$ is extended to the new mainland $\widehat{\mathcal{M}}[s]$ that is isomorphic to $\mathcal{M}[e,\theta(s-1)+1]$,

        \item the isomorphism $c[s-1]$ (acting from $\mathcal{M}[e,\theta(s-1)]$ onto $\widehat{\mathcal{M}}[s-1]$) is extended to an isomorphism $c[s]$ from $\mathcal{M}[e,\theta(s-1)+1]$ onto $\widehat{\mathcal{M}}[s]$.
    \end{itemize}

    Along the construction, every element $z$ from the archipelago gets its own label $\boxed{q_z(w)}$, where $q_z(x)$ is a term of the signature $\sigma_e = \{\Succ, t_0,t_1,\dots,t_e\}$ such that $q_z(x)$ defines a function from the family $\mathcal{F}$. The label satisfies $q^{\mathcal{A}}_z(w) = z$.
    
    We assume that the element $w$ has label $\boxed{w}$ (recall that the function $I^1_1$ belongs to $\mathcal{F}$). 

    We fix the set of normal forms $T$ from Definition~\ref{def:norm-form} (and the corresponding objects $N(t)$, $\leq^T$).

    Suppose that we have already defined labels $\boxed{q_z(w)}$ for each element $z\in \operatorname{Ar}[s-1]$.
    Then the new archipelago $\operatorname{Ar}[s] \supset \operatorname{Ar}[s-1]$ is constructed as follows.
    
    One-by-one, for each function $f\in\sigma_e$ and each label $\boxed{q_z(w)}$ for $z\in \operatorname{Ar}[s-1]$, we proceed with the following actions:
    \begin{itemize}
        \item Consider the term $u(x) = f(q_z(x))$. By Items~(2)--(3) of Definition~\ref{def:L-fam}, $u(x)$ defines a function from the family $\mathcal{F}$. Hence, we can primitively recursively compute the value $N(u)$.

        \item If there exists an element $a$ from the current archipelago such that $N(q_{a}) = N(u)$ (that is, $q_{a}(x)$ and $u(x)$ define the same function), then we declare that $f^{\mathcal{A}}(z) = a$. This declaration is well-defined, since we must have
        \[
            a = q^{\mathcal{A}}_a(w) = u^{\mathcal{A}}(w) = f^{\mathcal{A}}(q_z^{\mathcal{A}}(w)) = f^{\mathcal{A}}(z).
        \]

        \item Otherwise, for all $a$ from the current archipelago we have $N(q_a) \neq N(u)$. Since $\leq^{T}$ is a linear order on $T$ (and the set $\mathcal{F}$ is linearly ordered with respect to the domination order $\preccurlyeq$), this implies that for every $a$, we have either $q_a(x) \prec u(x)$ or $u(x) \prec q_a(x)$.
        
        We add a fresh element $a^{\ast}$ to the archipelago and define its label  $\boxed{q_{a^{\ast}}(w)} = \boxed{u(w)}$. We put $f^{\mathcal{A}}(z) = a^{\ast}$. In addition, for each element $b\neq a^{\ast}$ from the current archipelago we make the following actions:
        \begin{itemize}
            \item If $N(q_{b}) <^T N(u)$, then we compute a strict $(q_{b},u)$-domination witness $d_{b}$. We also declare that $b <^{\mathcal{A}} a^{\ast}$.

            \item If $N(u) <^T N(q_{b})$, then we compute a strict $(u,q_{b})$-domination witness $d_{b}$. We declare that $a^{\ast} <^{\mathcal{A}} b$.
        \end{itemize}
    \end{itemize}
    
    In the end, the value $\beta(s)$ is defined as $1+$(the maximum of $\beta(s-1)$ and all the strict domination witnesses $d_{b}$ that appear in the computations above). Consequently, if $x \geq \beta(s)$ and $a,b\in \operatorname{Ar}[s]$ and $a <^{\mathcal{A}} b$, then we have $q_a(x) < q_b(x)$.

\subparagraph*{Procedure $\texttt{Connect}(e,s)$.}
    We define the value $\Theta =  \max(\theta(s-1),\beta(s-1))$. The computation of the value $\beta(s-1)$ given above ensures the following property of the parameter $\Theta$: 
    \begin{description}
    \item[($\dagger$)] Suppose that at the beginning of the stage $s$ we have two elements $a \neq b$ from the archipelago $\operatorname{Ar}[s-1]$. Then the corresponding labels $\boxed{q_a(w)}$ and $\boxed{q_b(w)}$ satisfy the following: if $x\geq \Theta$ and $a<^{\mathcal{A}} b$, then $q_a(x) < q_b(x)$.
    \end{description}
    We compute the values 
    $D' = \max(\operatorname{dom}(\mathcal{M}[e,\theta(s-1)]))$  and $D = c(D')$.

    If $D' < \Theta$, then firstly we use fresh numbers to produce an extension of $\widehat{\mathcal{M}}[s-1]$ to an isomorphic copy of $\mathcal{M}[e,\Theta]$ (and we also extend the isomorphism $c[s-1]$ in appropriate way). Thus, without loss of generality, we may assume that $D' \geq \Theta$.

    Note that the value $\Succ^{\mathcal{A}_{s-1}}(D)$ is undefined. Thus, our connection procedure (that builds $\mathcal{A}_{s}$) is arranged as follows:
    \begin{enumerate}[label=(\alph*)]
        \item We put $\Succ^{\mathcal{A}}(D) = w$, and we set $c(D'+1) = w$.

        \item For every $a\in \operatorname{Ar}[s-1]$ such that $a\neq w$, we find the label $\boxed{q_a(w)}$ and we define
        $c(q^{\Std}_a(D'+1)) = a$, 
        where the value $q^{\Std}_a(D'+1)$ is evaluated in the standard copy $\Std = (\mathbb{N},\Succ)$. 
        
        Since every function $q_a(x) \in \mathcal{F}$ is strictly increasing, we have $q^{{\Std}}_a(D'+1) \geq D' + 1$ and thus $q^{{\Std}}_a(D'+1)$ is greater than all elements from  $\mathcal{M}[e,\theta(s-1)]$.

        By Property~($\dagger$) we deduce that all $a,b\in \operatorname{Ar}[s-1]$ satisfy: 
        \[
        q^{{\Std}}_a(D'+1) \leq q^{{\Std}}_b(D'+1)\ \Longleftrightarrow\ a\leq^{\mathcal{A}} b.
        \]
        In particular, $c^{-1}\upharpoonright \operatorname{Ar}[s-1]$ is a well-defined map that is an isomorphism between the linear orders $(\operatorname{Ar}[s-1],\leq^{\mathcal{A}})$ and $(c^{-1}(\operatorname{Ar}[s-1]), \leq)$.

        We obtain that (at the moment) we have
        \[
            x \leq y\ \Leftrightarrow\ c(x) \leq^{\mathcal{A}} c(y),\ \text{ for all } x,y\in \operatorname{dom}(c[s]).
        \]

        \item We compute $\Theta' = \max\{ c^{-1}(a) : a \in \operatorname{Ar}[s-1]\}$. Note that $\Theta' \geq D' + 1 > \Theta \geq \theta(s-1)$. Hence, $\Theta' \geq \theta(s-1) + 1$.

        We put $\theta(s) = \Theta'$ and use fresh numbers to extend $\widehat{\mathcal{M}}[s-1]$ to  $\widehat{\mathcal{M}}[s] \cong \mathcal{M}[e,\theta(s)]$. We also extend $c[s-1]$ to an isomorphism $c[s]$ from $\mathcal{M}[e,\theta(s)]$ onto $\widehat{\mathcal{M}}[s]$.
    \end{enumerate}

\subparagraph*{Construction.} 
    Following the strategy described above, we satisfy our requirements $\mathcal{R}_e$, $e\in\mathbb{N}$, one by one: $\mathcal{R}_0,\mathcal{R}_1,\mathcal{R}_2,\dots$.

    If at the end of a stage $s$ we need  to move from a requirement $\mathcal{R}_e$ to the next requirement $\mathcal{R}_{e+1}$, then (if needed) we use fresh numbers to extend the current mainland $\widehat{\mathcal{M}}[s]$ (that  looks like $\mathcal{M}[e,\theta(s)]$) to an isomorphic copy of $\mathcal{M}[e+1,\theta(s)]$. We also extend the isomorphism $c[s]$ in appropriate way.

\subparagraph*{Verification.} 
    It is clear that the constructed structure $\mathcal{A} = \bigcup_{s\in\mathbb{N}}\mathcal{A}_s$ has domain $\mathbb{N}$. Notice that $x\in\mathcal{A}_x$ for every $x\in\mathbb{N}$.
    
    Since the described construction never uses unbounded search procedures, the structure $\mathcal{A} = (\mathbb{N},\Succ^{\mathcal{A}})$ is punctual. In addition, every number $x\in\mathbb{N}$ eventually joins the mainland $\widehat{\mathcal{M}}[s]$, and by the end of each stage $s$ the structure $\widehat{\mathcal{M}}[s]$ is isomorphic to the signature-reduct $\mathcal{M}[e(s),\theta(s)]\upharpoonright \{\Succ\}$. Hence, we have
    $
    \mathcal{A} = \bigcup_{s\in\mathbb{N}} \widehat{\mathcal{M}}[s] \cong (\mathbb{N},\Succ)$.

    The ordering $\leq^{\mathcal{A}}$ is primitive recursive. Indeed, this is witnessed by the following primitive recursive procedure. Given numbers $x\neq y$, consider the value $s = \max(x,y)$ and the finite structure $\mathcal{A}_{s}$:
    \begin{itemize}
        \item If both $x,y$ belong to the mainland $\widehat{\mathcal{M}}[s]$, then we compute $c^{-1}(x)$ and $c^{-1}(y)$. Here we have
        $x <^{\mathcal{A}} y\ \Longleftrightarrow\ c^{-1}(x) < c^{-1}(y)$.
        
        \item If both $x,y$ belong to the archipelago $\operatorname{Ar}[s]$, then we have the inequality $x <^{\mathcal{A}} y$ if and only if this inequality has been explicitly declared by the \texttt{Extend} procedure by the end of the stage $s$.

        \item Otherwise, one of the numbers, say, $x$ belongs to $\widehat{\mathcal{M}}[s]$, and the other number $y$ belongs to $\operatorname{Ar}[s]$. Then we have $x <^{\mathcal{A}} y$.
    \end{itemize}


    For every function $f\in\mathcal{F}$, there exists an index $e$ such that $f(x) = t_e(x)$. We show that the function $f^{\mathcal{A}}(x) = t_e^{\mathcal{A}}(x)$ is primitive recursive. 
    
    We non-uniformly fix the stage $s^{\ast}\geq 1$ such that our construction starts working with the requirement $\mathcal{R}_e$ at the stage $s^{\ast}$. Given $x\in\mathbb{N}$, we describe a primitive recursive procedure for computing $t^{\mathcal{A}}_e(x)$. We consider $s = \max(s^{\ast},x)$ and the finite structures $\mathcal{A}_s \subset \mathcal{A}_{s+1}$. Note that $x \in \mathcal{A}_s$.
    \begin{itemize}
        \item If $x$ belongs to the mainland $\widehat{\mathcal{M}}[s]$, then by using the function $c[s+1]$ we recover the value
        $c(t_e(c^{-1}(x)))$
        that is equal to $t^{\mathcal{A}}_e(x)$.

        \item Otherwise, $x$ belongs to the archipelago $\operatorname{Ar}[s]$, and by the end of the stage $s+1$, the \texttt{Extend} procedure has explicitly declared the value $t^{\mathcal{A}}_e(x)$.
    \end{itemize}
    We obtain that for every $f\in\mathcal{F}$, the function $f^{\mathcal{A}}$ is primitive recursive. 
    
     In the description of the strategy for a requirement $\mathcal{R}_e$, we have shown that $\mathcal{R}_e$ will be eventually satisfied. Hence, the function $\Pred^{\mathcal{A}}(x)$ is not primitive recursive.
    Therefore, we conclude that the family $\mathcal{F}$ is not a basis for punctual standardness. 
    Theorem~\ref{theo:nonstandardness-meta} is proved.
\end{proof}

\subsection{Corollaries of Theorem~\ref{theo:nonstandardness-meta}}

An analysis of the proof of Theorem~\ref{theo:nonstandardness-meta} allows us to obtain the following technical result:

\begin{corollary}\label{corol:non-basis-meta}
    Suppose that an $L$-family $(\mathcal{Q},\mathcal{F})$ has a set of normal forms. Let $\mathcal{U}=\{ u_j(x_1,x_2,\dots,x_{m_j})\}_{j\in\mathbb{N}}$ be a uniform family of primitive recursive functions such that $\mathcal{U} \subseteq \mathcal{Q}$ and $\mathcal{U}$ satisfies the following property:
    \begin{description}             \item[\textnormal{($\ddagger$)}] Consider an arbitrary $j\in\mathbb{N}$ and an arbitrary tuple $\vec{r} = (r_1(x),r_2(x),\dots,$ $r_{m_j}(x))$ such that each $r_i(x)$ is either a function from $\mathcal{F}$, or a constant from $\mathbb{N}$. Then the function 
        \[
            u(x) = u_j(r_1(x),r_2(x),\dots, r_{m_j}(x))
        \]
        either belongs to $\mathcal{F}$ or is a constant, and one can (uniformly in $j$ and $\vec{r}$) primitively recursively find an expression $t'(x)$ (which is either a term $t(x)$ in the signature $\mathcal{Q}$, or a constant $d\in\mathbb{N}$) that defines $u(x)$.
    \end{description}
    Then the family $\mathcal{F} \cup \mathcal{U}$ is not a basis for punctual standardness.
\end{corollary}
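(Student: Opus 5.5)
We plan to rerun the construction from the proof of Theorem~\ref{theo:nonstandardness-meta} with one change: the functions of $\mathcal{U}$ are added to the finite signatures used there. The mainland/archipelago architecture, the witnesses $w$, the diagonalization against $\Pred^{\mathcal{A}}=\psi_e$, the parameters $\theta,\beta,\Theta$ and the procedure \texttt{Connect} will all stay as before. Concretely, we enlarge $\sigma_e$ to $\sigma_e^{+}=\{\Succ,\leq,t_0,\dots,t_e,u_0,\dots,u_e\}$. The partial structures $\mathcal{M}[e,s]$ will then be closed under $u_j$ applied to all tuples drawn from the current domain, which is still a primitive recursive finite computation. On the mainland, $u_j^{\mathcal{A}}$ will therefore be read off through $c[s]$, exactly as $t_e^{\mathcal{A}}$ is in the verification.

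The new work is in \texttt{Extend}. Each archipelago element $z$ carries a label $\boxed{q_z(w)}$ with $q_z\in\mathcal{F}$. We must define $u_j^{\mathcal{A}}(z_1,\dots,z_{m_j})$ whenever each argument $z_i$ lies either in the archipelago or in the mainland, and at least one lies in the archipelago. A mainland argument $z_i$ has a known constant value $c^{-1}(z_i)\in\mathbb{N}$, so we regard it as the constant $r_i$. An archipelago argument gets $r_i=q_{z_i}$. By ($\ddagger$), applied uniformly, we primitively recursively obtain an expression $t'(x)$ for $u_j(r_1(x),\dots,r_{m_j}(x))$.
\begin{itemize}
\item If $t'$ is a term defining a member of $\mathcal{F}$, we treat it exactly as the term $u=f(q_z)$ is treated in the original \texttt{Extend}. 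We compute $N(t')$ and match it against an existing archipelago element if possible. Otherwise we create a fresh element with label $\boxed{t'(w)}$, declare its position in $\leq^{\mathcal{A}}$, and record strict domination witnesses in $\beta(s)$.
\item If $t'$ is a constant $d$, the value must be $c(d)$ on the mainland. We extend the mainland, via a primitive recursive extension to a copy of $\mathcal{M}[e,\max(\theta,d)]$, so that $c(d)$ exists, and set the value to $c(d)$.
\end{itemize}
Either way the assignment is forced by the eventual isomorphism, because \texttt{Connect} assigns $c^{-1}(a)=q_a^{\Std}(D'+1)$ and $c^{-1}(w)=D'+1$. It is therefore consistent with $u_j$ on $\Std$.

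The diagonalization argument carries over unchanged. It only needs every archipelago element to be $q^{\mathcal{A}}(w)$ for some strictly increasing $q\in\mathcal{F}$; then $w\le^{\mathcal{A}} z$ and $w\neq\Succ^{\mathcal{A}}(z)$ for every archipelago element $z$. Elements produced by $u_j$ are either such labels or lie on the mainland, so the invariant is preserved. Property ($\dagger$) and the order-compatibility in \texttt{Connect} also carry over verbatim, since all labels are still in $\mathcal{F}$.

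For the verification, we show $u_j^{\mathcal{A}}$ is primitive recursive by the same method as for $t_e^{\mathcal{A}}$. Fix non-uniformly the stage $s^{\ast}$ at which index $j$ enters the signature. Given inputs $\vec z$, let $s=\max(s^{\ast},\vec z)$, run the construction to stage $s+1$, and read off the declared value.

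The main obstacle is termination and primitive recursiveness of a single \texttt{Extend} stage. Tuples may now mix mainland and archipelago elements, and the constants from ($\ddagger$) may force the mainland far out. We will handle this by processing only tuples of elements present at the start of the stage. Each such tuple gives a primitive recursively computable constant $d$, so the needed mainland extension is bounded by a primitive recursive function of the stage data. A stage thus remains a bounded computation, and punctuality of $\mathcal{A}$ and of the order $\leq^{\mathcal{A}}$ follows as in the proof of Theorem~\ref{theo:nonstandardness-meta}.
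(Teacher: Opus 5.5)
Your proposal is correct and follows the paper's proof sketch. You close $\mathcal{M}[e,s]$ under $u_0,\dots,u_e$ so that $u_j^{\mathcal{A}}$ is computable on the mainland, and you let \texttt{Extend} apply $u_j$ via ($\ddagger$) to tuples whose entries are mainland constants $c^{-1}(b)$ or archipelago labels, leaving the diagonalization, ($\dagger$) and \texttt{Connect} unchanged. Your explicit treatment of the case where ($\ddagger$) returns a constant $d$ (extend the mainland so that $c(d)$ exists and use it as the value) is a detail the paper leaves under ``mutatis mutandis,'' and you handle it correctly.
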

\begin{proof}[Proof Sketch]
    Essentially, one needs to `incorporate' all functions $u_j\in\mathcal{U}$ into the construction of the structure $\mathcal{A}$ given in Theorem~\ref{theo:nonstandardness-meta}: that is, we have to additionally ensure that for every $j\in\mathbb{N}$, the function $u_j^{\mathcal{A}}$ is primitive recursive.

    The key construction modifications include the following.

    (1)\ The domain of the finite structure $\mathcal{M}[e,s+1]$ must include all numbers $a$ such that $a \leq u_j(b_1,b_2,\dots,b_{m_j})$ for some $j\leq e$ and $b_1,b_2,\dots,b_{m_j}\in\operatorname{dom}(\mathcal{M}[e,s])$. Informally speaking, this convention ensures that one can `prompt\-ly' compute the function $u_j^{\mathcal{A}}$ on the mainland part of the constructed structure~$\mathcal{A}$.

    (2)\ We have to modify the procedure $\texttt{Extend}(e,s)$ that grows the current archipelago. When working with a label $\boxed{q_z(w)}$ for $z\in \operatorname{Ar}[s-1]$, we need to apply to it not only the functions $f\in\sigma_e$, but also all functions 
    $f'(x) = u_j(r_1(x),r_2(x),\dots, r_{m_j}(x))$
    from Property~($\ddagger$), where $j\leq e$, and the parameters $r_i(x)$ are chosen as:
    \begin{itemize}
        \item either a constant from the current mainland $\widehat{\mathcal{M}}[s]$,

        \item or the function $q_{a}(x)$, for some label $\boxed{q_{a}(w)}$ in the archipelago $\operatorname{Ar}[s-1]$.
    \end{itemize}
    Informally speaking, this allows us to promptly compute the value $u_j^{\mathcal{A}}(b_1,b_2,\dots,$ $b_{m_j})$ in the case when some $b_i$ first appears in the archipelago part of $\mathcal{A}$.

    The rest of the proof proceeds similarly to that of Theorem~\ref{theo:nonstandardness-meta}, mutatis mutandis.
\end{proof}

We illustrate how one can apply \cref{corol:non-basis-meta}. Recall

\corArithmetic*

In order to prove Corollary~\ref{corollary_arithmetic}, we choose the following objects:
\begin{itemize}
    \item the family of primitive recursive functions $\mathcal{Q}$ is equal to $\{S,+,\times\}$,
    
    \item the family of unary functions $\mathcal{F}$ is chosen as the family of all non-constant polynomials $p(x)$ with coefficients taken from $\mathbb{N}$.
\end{itemize}
It is straightforward to check that the pair $(\mathcal{Q},\mathcal{F})$ is an $L$-family. In addition, the family $\mathcal{U} := \mathcal{Q}$ has Property~($\ddagger$) from Corollary~\ref{corol:non-basis-meta}.

Now we need to introduce a set of normal forms $T$ (recall Definition~\ref{def:norm-form}) for $(\mathcal{Q},\mathcal{F})$. Given a non-constant polynomial 
\[
    t(x) = a_k x^k + a_{k-1} x^{k-1} + \dots + a_1 x + a_0,\quad a_i\in \mathbb{N},\ a_k\neq 0,
\]
we put $N(t) = p_0^{a_0} \cdot p_1^{a_1} \cdot \ldots \cdot p_{k-1}^{a_{k-1}} \cdot p_k^{a_k}$, where $p_i$ is the $i$-th prime number. We define $T = \{ N(t) : t \in \mathcal{F}\}$.

Let $q(x)\neq t(x)$ be two polynomials from $\mathcal{F}$. Then $q(x)$ eventually dominates $t(x)$ if and only if the leading coefficient of the polynomial $u(x) := q(x) - t(x)$ is positive. Suppose that $u(x)$ is equal to $b_k x^k + \dots + b_1x +b_0$, where $b_k \in \mathbb{N}^{+}$ and $b_i \in \mathbb{Z}$. Then one can choose $n_0 := k\cdot (1 + \max\{|b_j| : j\leq k\})$ as a strict $(t,q)$-domination witness. 

Using the argument above, it is easy to show that $T$ is indeed a set of normal forms for $(\mathcal{Q},\mathcal{F})$.
Therefore, by applying Corollary~\ref{corol:non-basis-meta}, we obtain Corollary~\ref{corollary_arithmetic}.

\subsection{An Application to the Levitz's Class}\label{subsect:Levitz}

Recall the notion of the Levitz's class:

\defLevitz*

Here we prove the following:

\corLevitz*

\begin{proof}
Observe that for a function $f$ from the Levitz's class $\LL$, $f(x) = 0$ is possible only if $x = 0$ or $f = 0$. Therefore, the restrictions of the functions from $\LL$ to $\Nat^+$ are precisely the functions in the class $\LL'$ from \cite{Levitz}, Section~3. In particular, they are well-ordered with respect to the (eventual) domination order $\preccurlyeq$. 

In order to apply \cref{theo:nonstandardness-meta}, we choose:
\begin{itemize}
    \item $\mathcal{Q} = \{ S, +, \times\} \cup \{n^x : n\in\mathbb{N}^{+}\}$,

    \item $\mathcal{F}$ is the family of all non-constant functions from the class $\mathcal{L}_c$.
\end{itemize}
It is clear that every function from $\mathcal{F}$ is strictly increasing. We deduce that $(\mathcal{Q},\mathcal{F})$ is an $L$-family.
Now we need to introduce a set of normal forms $T$ for $(\mathcal{Q},\mathcal{F})$.

We assume the following encoding of the terms $f \in \LL$:
\begin{gather*}
\code{0} = 0, \;\;\; \code{1} = 1, \;\;\; \code{f+g} = 2^{\code{f}+1}\cdot 3^{\code{g}+1},\\
\code{f\cdot g} = 5^{\code{f}+1}\cdot 7^{\code{g}+1},\;\;\; \code{x^f} = 11^{\code{f}+1}, \;\;\; \code{n^f} = 13^n \cdot 17^{\code{f}+1}.
\end{gather*}
Note that the relations $f = 0$ and $f = 1$ are primitive recursive in $\code{f}$. 

We define 
\begin{equation}\label{equ:T-hat}
    T = \{ \code{f} : f\in \mathcal{F}\} \text{ and } \widehat{T} = \{ \code{f} : f\in \mathcal{L}\}
\end{equation}
Observe that both $T$ and $\widehat{T}$ are primitive recursive sets. 

In the rest of the proof, we establish that the larger set $\widehat{T}$ satisfies Condition~2 of Definition~\ref{def:norm-form} for the pair $(\widehat{\mathcal{Q}},\mathcal{L})$, where $\widehat{\mathcal{Q}} = \mathcal{Q} \cup \{ x^y\}$. This fact implies that $T$ is a set of normal forms for $(\mathcal{Q},\mathcal{F})$. 

We note here that we cannot directly apply \cref{theo:nonstandardness-meta} to the `big' pair $(\widehat{\mathcal{Q}},\mathcal{L})$, since the Levitz's class $\mathcal{L}$ is \emph{not} closed with respect to composition.

We introduce the background that is necessary for an analysis of $\widehat{T}$ and $\LL$. Following \cite{Levitz}, $f \neq 0$ is called an \emph{additive prime}, if $f = g+h$ implies $g = 0$ or $h = 0$.

A \emph{multiplicative normal form} (MNF) for $f \neq 1$ is a representation
$f = u_1^{f_1}u_2^{f_2} \ldots u_k^{f_k}$, where:
\begin{enumerate}[label=\arabic*.]
  \item each $f_i$ is additive prime;
  \item each $u_i$ belongs to $\Nat\setminus\{0,1\} \cup \{x\}$;
  \item if $i \neq j$ and $u_i,u_j \in \Nat$, then $f_i \neq f_j$;
  \item if $u_i \in \Nat$, then $f_i \neq 1$;
  \item $u_k^{f_k} \preccurlyeq \ldots \preccurlyeq u_2^{f_2} \preccurlyeq u_1^{f_1}$.
\end{enumerate}
By omission of Items 3. and 5. we obtain the notion of a \emph{set-MNF} for $f$.

Every additive prime $f \neq 1$ has a unique MNF, which will be denoted as $MNF(f)$. We assume that $MNF(1) = 1$.

Theorem 3.2 in \cite{Levitz} shows that for $f, g$ in MNF, $f = u_1^{f_1}u_2^{f_2} \ldots u_k^{f_k}$ and $g = v_1^{g_1}v_2^{g_2} \ldots v_\ell^{g_\ell}$,
\begin{align}\label{CompareMNF}
  f \prec g \;\text{ iff }\; \exists i \;\; [\forall j < i &\; (u_j^{f_j} = v_j^{g_j}) \;\&\; \nonumber \\
                                   &(i = k+1 \leq \ell \;\vee\; f_i \prec g_i \;\vee\; (f_i = g_i \;\&\; u_i \prec v_i)) ].
\end{align}

An \emph{additive normal form} (ANF) for $f \neq 0$ is a representation
$f = p_1 + p_2 + \ldots + p_k$, where:
\begin{enumerate}[label=\arabic*.]
 \item each $p_i$ is an additive prime;
 \item $p_k \preccurlyeq \ldots \preccurlyeq p_2 \preccurlyeq p_1$.
\end{enumerate}
By relaxing the requirement 2. we obtain the notion of a \emph{set-ANF} for $f$.

Every $f \neq 0$ has a unique ANF, denoted as $ANF(f)$. We put $ANF(0) = 0$.

Theorem 3.3 in \cite{Levitz} shows that for $f,g$ in ANF, $f = p_1 + p_2 + \ldots p_k$ and $g = q_1 + q_2 + \ldots q_\ell$,
\begin{equation}\label{CompareANF}
  f \prec g \;\text{ iff }\; \exists i \;\; [\forall j < i \;(p_j = q_j) \;\;\&\;\; (i = k+1 \leq \ell \;\vee\; p_i \prec q_i) ].
\end{equation}

As a corollary, any $f = u_1^{f_1}u_2^{f_2} \ldots u_k^{f_k}$ in MNF is an additive prime.
In particular, the product of additive primes is also an additive prime.

Intuitively speaking, the next series of technical lemmas establishes Condition~2.(a) of Definition~\ref{def:norm-form}: the ordering of eventual domination $f\preccurlyeq g$ is primitive recursive in the corresponding codes $\code{f},\code{g}$.

\begin{lemma}\label{setnormal} There exist functions $\bm{sa},\bm{sm} \colon \LL \to \LL$, such that: 
\begin{enumerate}[label=\arabic*.]
\item for any $f \in \LL$, $\bm{sa}(f) = p_1 + p_2 + \ldots p_k$ is a set-ANF for $f$, such that $\code{p_i} \leq \code{f}$ for all~$i$; 
\item for any additive prime $f \in \LL$, $\bm{sm}(f) = u_1^{f_1}u_2^{f_2} \ldots u_k^{f_k}$ is a set-MNF for $f$, such that $\code{f_i} < \code{f}$ for all $i$. 
\end{enumerate}
Moreover, $\code{\bm{sa}(f)}$ and $\code{\bm{sm}(f)}$ are primitive recursive in $\code{f}$.
\end{lemma}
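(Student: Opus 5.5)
The plan is to define $\bm{sa}$ and $\bm{sm}$ simultaneously by course-of-values recursion on the code $\code{f}$, following the term structure fixed by the coding. Every operation below is elementary on codes, and each recursive call is made on codes that are strictly smaller. Course-of-values recursion then yields primitive recursiveness of $\code{f}\mapsto\code{\bm{sa}(f)}$ and $\code{f}\mapsto\code{\bm{sm}(f)}$. We must also bound the codes of the outputs; alternatively, we can represent set-ANFs and set-MNFs as coded finite lists whose lengths are bounded by an elementary function of $\code{f}$.

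For $\bm{sa}$ we distinguish cases by the outermost constructor of $f$.
\begin{itemize}
\item If $f=0$, we set $\bm{sa}(f)=0$. If $f\in\{1,x\}$ or $f=n^g$, then $f$ is itself an additive prime (it is nonzero, and a sum $g+h$ of Levitz terms equal to it forces one summand to vanish), so we set $\bm{sa}(f)=f$.
\item If $f=x^g$, we first compute $\bm{sa}(g)=g_1+\dots+g_m$ by recursion, using $\code{g}<\code{f}$. Then $f=x^{g_1}\cdots x^{g_m}$ is a product of additive primes (or equals $1$ if $g=0$), hence an additive prime by the remark after \eqref{CompareANF}, so again $\bm{sa}(f)=f$.
\item If $f=g+h$, we concatenate $\bm{sa}(g)$ and $\bm{sa}(h)$, and each resulting summand has code at most $\code{g},\code{h}<\code{f}$.
\item If $f=g\cdot h$, we distribute: $\bm{sa}(f)=\sum_{i,j} p_i q_j$ over the summands $p_i$ of $\bm{sa}(g)$ and $q_j$ of $\bm{sa}(h)$. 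Each $p_iq_j$ is an additive prime, and its code is $5^{\code{p_i}+1}7^{\code{q_j}+1}\le 5^{\code{g}+1}7^{\code{h}+1}=\code{f}$. Terms with a zero factor are discarded.
\end{itemize}
In each case the bound $\code{p_i}\le\code{f}$ follows from the monotonicity of the coding in its arguments.

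For $\bm{sm}$, let $f$ be an additive prime different from $1$, and again split by constructor.
\begin{itemize}
\item If $f=x$, we take $x^1$.
\item If $f=n^g$, we compute $\bm{sa}(g)=g_1+\dots+g_m$ and output $n^{g_1}\cdots n^{g_m}$. Each factor with $g_i=1$ is rewritten as the integer constant $n$, which is allowed by Item~4 of the MNF definition applied to set-MNFs only through the base set; alternatively we absorb it as an integer base $n^1$ treated separately. We also need $\code{g_i}\le\code{g}<\code{f}$.
\item If $f=x^g$, we output $x^{g_1}\cdots x^{g_m}$ similarly.
\item If $f=g\cdot h$, both factors are additive primes (or $1$), and we concatenate $\bm{sm}(g)$ and $\bm{sm}(h)$, with exponent codes below $\code{g},\code{h}<\code{f}$.
\item If $f=g+h$ is additive prime, one summand is $0$, and we recurse on the other.
\end{itemize}
Degenerate bases must be handled: $1^g=1$ is dropped, and $n=1$ or $g=0$ yields the factor $1$.

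I expect the main obstacle to be verifying the code bounds, in particular the strict inequality $\code{f_i}<\code{f}$, together with the precise handling of the degenerate cases ($0$, $1$, base $1$, exponent $0$ or $1$) so that the output really satisfies Items~1, 2 and~4 of the set-MNF definition. The primitive-recursiveness itself is routine once these bounds are secured, since they give an a priori bound on output codes and list lengths, and bounded course-of-values recursion then applies.
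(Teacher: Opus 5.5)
Your overall scheme is the paper's: course-of-values recursion on codes, concatenation for sums, distribution for products, and splitting $x^{g}$ and $n^{g}$ along $\bm{sa}(g)$ to obtain $\bm{sm}$. However, your case $f=n^{g}$ of $\bm{sa}$ rests on a false claim: $n^{g}$ is \emph{not} always an additive prime.
\begin{itemize}
\item For $n\ge 2$ we have $n^{1}=1+\dots+1$, and for example $2^{x+1}=2\cdot 2^{x}=2^{x}+2^{x}$. So setting $\bm{sa}(n^{g})=n^{g}$ does not in general give a set-ANF.
\item The error propagates to your product case. There, the claim that every $p_iq_j$ is an additive prime presupposes that the summands returned for the two factors are additive primes. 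Your procedure would return $2^{x+1}\cdot x$ as a single summand, although it equals $2^{x}x+2^{x}x$.
\end{itemize}
The idea you are missing is the paper's. For $n\ge 2$, compute $\bm{sa}(g)=g_1+\dots+g_m$, put $I=\{i : g_i=1\}$ and $p=n^{\sum_{i\notin I}g_i}$. Then $p$ is an additive prime, being a product of the MNF terms $n^{g_i}$ with $g_i\neq 1$. Moreover $n^{g}=n^{|I|}\cdot p$, so $\bm{sa}(n^{g})$ is the sum of $n^{|I|}$ copies of $p$. The base $x$ causes no such trouble, because $x^{1}=x$ is allowed in an MNF, whereas Item~4 forbids $n^{1}$.

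This correction also settles your $\bm{sm}$ case for $n^{g}$. If $n^{g}$ with $n\ge 2$ is an additive prime, then $I=\emptyset$, so no exponent $g_i$ equals $1$ and $n^{g_1}\cdots n^{g_m}$ is already a set-MNF. Your proposed treatment of factors $n^{1}$ is therefore unnecessary, and it is not allowed anyway, since Item~4 is retained in the definition of a set-MNF.

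Be aware, finally, that once $\bm{sa}(n^{g})$ is corrected, the bound $\code{p}\le\code{f}$ no longer follows from monotonicity of the coding. Each $g_i$ satisfies $\code{g_i}\le\code{g}$, but the re-summed exponent $\sum_{i\notin I}g_i$ is a new term whose code can exceed $\code{g}$. For $g=(x+1)\cdot(x+1)$, the distributed summands other than $1\cdot 1$ re-sum to a term with $x\cdot x$ as a summand. Its code is at least $2^{\code{x\cdot x}+1}$, which exceeds $\code{g}=5^{\code{x+1}+1}7^{\code{x+1}+1}$. The paper only asserts $\code{p}\le\code{f}$ at this point. So this is exactly the obstacle you anticipated, and it needs a genuine argument (or a weaker bound that still supports the later inductions) rather than the routine check you sketch.
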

\begin{proof} By induction on the construction of $f$. For $f \in \{0,1\}$, we take $\bm{sa}(f) = \bm{sm}(f) = f$.

Now let us suppose we have defined $\bm{sa}(f') = p_1 + p_2 + \ldots + p_k$ and $\bm{sa}(f'') = q_1 + q_2 + \ldots + q_\ell$
and also $\bm{sm}(f')$, $\bm{sm}(f'')$.

For $f = f' + f''$, we define $\bm{sa}(f) = p_1 + p_2 + \ldots + p_k + q_1 + q_2 + \ldots + q_\ell$. Obviously, $\code{p_i} \leq \code{f'} < \code{f}$
and $\code{q_j} \leq \code{f''} < \code{f}$ for all $i,j$.

If $f = f' + f''$ is an additive prime, then $f' = 0$ ($k = 0$) and we define $\bm{sm}(f) = \bm{sm}(f'')$,
or $f'' = 0$ ($\ell = 0$) and we define $\bm{sm}(f) = \bm{sm}(f')$. The condition on the codes of the exponents is clearly satisfied.

For $f = f' \cdot f''$, we define $\bm{sa}(f) = \sum_{i=1}^k \sum_{j=1}^\ell p_i \cdot q_j$. Since $p_i \cdot q_j$ is an additive prime, the definition is correct.
For all $i,j$ we have:
$$ \code{p_i\cdot q_j} \;=\; 5^{\code{p_i}+1}\cdot 7^{\code{q_j}+1} \;\leq\; 5^{\code{f'}+1}\cdot 7^{\code{f''}+1} \;=\; \code{f}. $$

If $f = f' \cdot f''$ is an additive prime, then $f'$ and $f''$ are also additive primes ($k = \ell = 1$) and we define $\bm{sm}(f) = \bm{sm}(f') \cdot \bm{sm}(f'')$.
Clearly, the product of two set-MNFs is again a set-MNF. For each factor $u^p$ of $\bm{sm}(f)$ we have $\code{p} < \max(\code{f'},\code{f''}) \leq \code{f}$.

For $f = x^{f'}$, we have $f = x^{p_1} \cdot x^{p_2} \cdot \ldots \cdot x^{p_k}$. This implies that $f$ is an additive prime, therefore we may take $\bm{sa}(f) = f$ and $\bm{sm}(f) = x^{p_1} \cdot x^{p_2} \cdot \ldots \cdot x^{p_k}$. Of course, we have $\code{p_i} \leq \code{f'} < \code{f}$ for all $i$.

Lastly, let $f = n^{f'}$. If $n = 1$, we define $\bm{sa}(f) = \bm{sm}(f) = 1$. For $n \geq 2$, we have $f = n^{p_1} \cdot n^{p_2} \cdot \ldots \cdot n^{p_k}$.
If $p_i \neq 1$, then $n^{p_i}$ is an additive prime, but we may have $p_i = 1$ for some $i$. To this end, let $I = \{i\;|\;p_i = 1\}$ and $p = n^{\sum_{i \notin I}p_i}$.
Then $p$ is an additive prime, since it is a product of additive primes. We take 
\[
\bm{sa}(f) = \underbrace{p+p+\ldots+p}_{n^{|I|} \; times}. 
\]
Observe that $\code{p} \leq \code{n^{f'}} = \code{f}$.

If $f$ is an additive prime ($I = \emptyset$), then we define $\bm{sm}(f) = \prod_{i=1}^k n^{p_i}$. The exponents $p_i$ in this definition obviously satisfy $\code{p_i} \leq \code{f'} < \code{f}$.

In all cases, if $f$ is not an additive prime, we define $\bm{sm}(f) = 0$.
\end{proof}

\begin{lemma}\label{multnormal} For any additive prime $f$, $\code{MNF(f)}$ is primitive recursive in $\code{f}$. For any additive primes $g, f$, the relation $g \preccurlyeq f$ is primitive recursive in $\code{g}, \code{f}$.
\end{lemma}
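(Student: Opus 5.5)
We prove both claims together by a simultaneous course-of-values recursion on the codes, which keeps the procedure inside primitive recursion. Concretely, we define a function $H(z)$ that encodes the finite table of all values $\code{MNF(f)}$ for additive primes $f$ with $\code{f} \leq z$, and of all truth values of $g \preccurlyeq f$ for additive primes $g,f$ with codes $\leq z$. We show that $H(z+1)$ is computable from $H(z)$ by a primitive recursive step function. Since every code produced by normalization must be bounded primitive recursively in $\code{f}$, we also carry along a primitive recursive bound on the size of $\code{MNF(f)}$, which makes $H$ primitive recursive.

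To compute $MNF(f)$ for an additive prime $f \neq 1$, we first apply Lemma~\ref{setnormal} to get $\bm{sm}(f) = u_1^{f_1}\cdots u_k^{f_k}$, where each $f_i$ is an additive prime with $\code{f_i} < \code{f}$. By the recursion, MNFs and mutual comparisons of these exponents are already available. Using them, we compare the factors $u_i^{f_i}$ lexicographically, first by exponent and then by base, with $x$ dominating every numeral. This ordering is exactly the comparison of single-factor MNFs via \eqref{CompareMNF}, so it needs only comparisons of the $f_i$ and of bases.

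We then merge factors with a common numeral base $n$. Distinct exponents for a common base are required to be different, and two equal exponents $f_i = f_j$ combine into $n^{f_i}\cdot n^{f_i} = (n^2)^{f_i}$, with equality checked via the normal forms. More generally, several factors $n^{f}$ and $m^{f}$ with the same exponent combine into $(nm)^{f}$. Here we must be careful to restate Item 3 correctly: two distinct numeral bases must carry distinct exponents, so equal exponents are merged by multiplying bases. Finally, we sort the factors nonincreasingly, using bubble sort with at most $k^2$ comparisons. Since $k$, the bases, and the codes of the $f_i$ are bounded primitive recursively in $\code{f}$, the merged base products are bounded by an exponential in $\code{f}$, so the code of the result is bounded too.

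For comparing two additive primes $g$ and $f$, we take their MNFs and apply \eqref{CompareMNF}. This requires comparing exponents $f_i$ versus $g_i$ and testing equality of factors. Each exponent is itself an additive prime, but it may be a \emph{sum}: additive primes $f_i$ appearing as exponents could have codes that are not smaller than those of $g$ and $f$, so the recursion must be arranged on a well-founded measure. We will use $\max(\code{g},\code{f})$ together with the guarantee $\code{f_i} < \code{f}$ from Lemma~\ref{setnormal}. The exponents in the computed MNF come directly from $\bm{sm}$ (merging only changes bases), so their codes remain below $\code{f}$, and comparing exponents reduces to strictly smaller instances.

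Comparing bases is trivial: numerals compare as numbers, and $x$ dominates every numeral. A subtle point is that merging could produce a base like $n^2$, but bases are just numerals. Another subtlety is that exponents are additive primes but not literally in MNF. Equality of two exponents is decided as mutual $\preccurlyeq$, which is again a smaller instance.

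The main obstacle is this well-foundedness and bounding bookkeeping. We have to make sure that every recursive call is on pairs of codes strictly below the current maximum, and that the total number of calls per step is primitive recursively bounded, polynomial in $k$ and $\ell$. With this in place, the course-of-values recursion on $z = \max(\code{g},\code{f})$ yields both the MNF computation and the relation $g \preccurlyeq f$ primitive recursively.
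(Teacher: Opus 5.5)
Your proposal is correct and follows essentially the same route as the paper: a simultaneous course-of-values recursion on codes that builds $MNF(f)$ from the set-MNF $\bm{sm}(f)$ of Lemma~\ref{setnormal} by merging numeral factors with equal exponents and sorting via \eqref{CompareMNF}, then compares two MNFs using the fact that all their exponents have codes below $\code{f}$. Your explicit choices to keep the raw exponent terms from $\bm{sm}$, to decide exponent equality by mutual $\preccurlyeq$, and to recurse on $\max(\code{g},\code{f})$ spell out what the paper leaves implicit; the extra size bounds you carry are harmless but unnecessary, since ordinary primitive recursion on the table needs no such bound.
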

\begin{proof} We use simultaneous induction on $\code{f}$ and for all $g$, such that $\code{g} < \code{f}$.
  
  Let $f$ be an additive prime. Using Lemma \ref{setnormal} we obtain the set-MNF $\bm{sm}(f) = u_1^{f_1}u_2^{f_2} \ldots u_k^{f_k}$,
  where $\code{f_i} < \code{f}$ for all $i$. In order to produce $MNF(f)$:
  \begin{enumerate}
    \item compute $MNF(f_1), MNF(f_2), \ldots, MNF(f_k)$;
    \item enforce 3. in the definition of $MNF$ by combining factors of the form $n^{f_i}$ and $m^{f_i}$ into one factor $(nm)^{f_i}$;
    \item enforce 5. in the definition of $MNF$ by sorting the factors with respect to $\preccurlyeq$.
  \end{enumerate}
  To compare $u_i^{f_i}$ and $u_j^{f_j}$ with respect to $\preccurlyeq$, we can use Eq.~(\ref{CompareMNF}) and the inductive hypothesis applied to $f_i$ and $f_j$.
  Clearly, the whole algorithm works primitively recursively on the codes.

  Now let $g, f$ be additive primes such that $\code{g} < \code{f}$. We compute $MNF(f)$ and $MNF(g)$ using the above algorithm.
  Then we can compare them using Eq.~(\ref{CompareMNF}) and the fact that all exponents in both MNFs have smaller codes than $\code{f}$.
\end{proof}

\begin{lemma}\label{addnormal} For any $f \in \LL$, $\code{ANF(f)}$ is primitive recursive in $\code{f}$. The relation $g \preccurlyeq f$ is primitive recursive in $\code{g}, \code{f}$. Consequently, the set $\widehat{T}$ from Eq.~(\ref{equ:T-hat}) satisfies Condition~2.(a) of Definition~\ref{def:norm-form} for the pair $(\widehat{\mathcal{Q}},\mathcal{L})$.
\end{lemma}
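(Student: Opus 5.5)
The plan is to reduce everything to Lemma~\ref{setnormal}, Lemma~\ref{multnormal} and the comparison criterion (\ref{CompareANF}). Given $\code{f}$, we first compute the set-ANF $\bm{sa}(f) = p_1 + \dots + p_k$ from Lemma~\ref{setnormal}. Its length $k$ and the codes $\code{p_i} \le \code{f}$ are primitive recursively bounded in $\code{f}$, since $\code{\bm{sa}(f)}$ is a primitive recursive function of $\code{f}$. Each $p_i$ is an additive prime.

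The next step is to sort $p_1,\dots,p_k$ into nonincreasing order with respect to $\preccurlyeq$ by a bounded sorting loop (e.g.\ bubble sort with at most $k^2$ comparisons). For each comparison we use Lemma~\ref{multnormal}, which decides $p_i \preccurlyeq p_j$ primitive recursively for additive primes. We also replace each $p_i$ by $MNF(p_i)$, again by Lemma~\ref{multnormal}. This makes ties $p_i \preccurlyeq p_j \preccurlyeq p_i$ correspond to syntactic equality of normal-form codes, since the domination order on $\LL$ restricted to $\Nat^+$ is a linear order by Levitz and MNFs are unique. The result is, by uniqueness of ANF, exactly $ANF(f)$, and its code is primitive recursive in $\code{f}$. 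The special case $f = 0$ (detected from the code, as $\bm{sa}(f)$ is empty) is handled separately.

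For the relation $g \preccurlyeq f$, we compute $ANF(g) = q_1+\dots+q_\ell$ and $ANF(f) = p_1+\dots+p_k$ as above. We then apply (\ref{CompareANF}) as a bounded search over $i \le \max(k,\ell)+1$. The equality tests $p_j = q_j$ are done on the codes of the MNFs, which is legitimate by uniqueness of MNF. The strict tests $p_i \prec q_i$ use Lemma~\ref{multnormal}. Then $g \preccurlyeq f$ holds iff $g \prec f$ or $ANF(g)=ANF(f)$ syntactically, with $0$ handled by hand. Every search is bounded by primitive recursive functions of $\code{f},\code{g}$, so the relation is primitive recursive.

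The final clause follows directly. Condition~2.(a) of Definition~\ref{def:norm-form} asks for a primitive recursive linear order $\le^T$ on $\widehat{T}$ matching $\preccurlyeq$. We define $\code{g} \le^T \code{f}$ iff $g \preccurlyeq f$, using the relation just shown to be primitive recursive. To make it antisymmetric on codes, we break ties among codes of equal functions by the numerical order of the codes. Alternatively, we take $N(t) = \code{ANF(t)}$ with MNF summands, which is primitive recursive in $t$ by the first part.

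The main obstacle is justifying that Lemma~\ref{multnormal} covers all the comparisons we need. That lemma is stated for pairs with $\code{g} < \code{f}$, so I must check that it applies symmetrically to arbitrary pairs of additive primes; comparing against both orders of the inequality suffices. I must also check that equality of functions is decided correctly, i.e.\ that MNF codes coincide exactly when the functions coincide.
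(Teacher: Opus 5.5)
Your proposal is correct and is essentially the paper's own argument. You obtain the set-ANF from Lemma~\ref{setnormal}, replace the summands by their MNFs and sort them with Lemma~\ref{multnormal} to get $\code{ANF(f)}$, and then decide $g \preccurlyeq f$ by comparing the two ANFs via (\ref{CompareANF}).

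The one point to fix is in your last paragraph: drop the tie-breaking variant. With $N(t)=\code{t}$, take two different terms $t_1,t_2$ for the same function with $\code{t_1}>\code{t_2}$. Then $t_1\preccurlyeq t_2$ holds but $N(t_1)\le^T N(t_2)$ fails, which violates the ``if and only if'' in Condition~2.(a). The canonical choice $N(t)=\code{ANF(t)}$ is the right one. It is also exactly what the proof of Theorem~\ref{theo:nonstandardness-meta} needs, since that proof reads $N(q_a)=N(u)$ as ``$q_a$ and $u$ define the same function''.
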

\begin{proof} Given $f \in \LL$, first produce the set-ANF $\bm{sa}(f) = p_1 + p_2 + \ldots + p_k$ using Lemma \ref{setnormal}.
  Then compute $MNF(p_1), MNF(p_2), \ldots, MNF(p_k)$ and sort them with respect to $\preccurlyeq$ using Lemma \ref{multnormal}.
  This algorithm produces $\code{ANF(f)}$ primitive recursively from $\code{f}$.

  Now given $f, g \in \LL$, first compute $ANF(f)$ and $ANF(g)$ using the above algorithm and then compare them by application of (\ref{CompareANF}) and the second part of Lemma \ref{setnormal} for comparing the summands. This algorithm is again primitive recursive on the codes.
\end{proof}

Now we need to establish Condition~2.(b) of Definition~\ref{def:norm-form}: that is, we need to show how to primitively recursively find appropriate strict domination witnesses.

We assume all logarithms below are binary.

\begin{lemma}\label{prconv}
  For any $k \geq 5$ it holds that: \; $x \geq k^2 \;\Rightarrow\; \frac{\log x}{x} < \frac{1}{k}$.
\end{lemma}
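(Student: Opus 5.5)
The plan is to reduce the claim to an estimate of the form $\log x < x/k$ and to prove it in two steps: the boundary case $x = k^2$, then monotonicity of $\frac{\log x}{x}$ beyond that point.

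First I will show that $\frac{\log x}{x}$ is decreasing for real $x \geq e$. The derivative is $\frac{1/\ln 2 - \log x \cdot \ln 2/\ln 2 \cdot 1}{x^2}$, which is proportional to $\frac{1-\ln x}{x^2\ln 2}$ and hence negative for $x > e$. Since $k \geq 5$ gives $k^2 \geq 25 > e$, it is enough to verify the inequality at $x = k^2$. Every $x \geq k^2$ then satisfies $\frac{\log x}{x} \leq \frac{\log k^2}{k^2}$.

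At $x = k^2$ the claim reads $\frac{2\log k}{k^2} < \frac1k$, which is equivalent to $2\log k < k$, that is, $k^2 < 2^k$. I will prove $k^2 < 2^k$ for $k \geq 5$ by induction on $k$. The base case is $25 < 32$. For the inductive step, $(k+1)^2 = k^2 + 2k + 1 < 2k^2 \leq 2\cdot 2^k$ whenever $k \geq 3$, because $k^2 > 2k + 1$ holds there. If $k$ is real rather than an integer, I will handle it the same way using monotonicity: $g(k) = k - 2\log k$ has derivative $1 - \frac{2}{k\ln 2} > 0$ for $k \geq 3$, and $g(5) = 5 - 2\log 5 > 0$.

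Chaining the two steps gives, for $x \geq k^2$, $\frac{\log x}{x} \leq \frac{2\log k}{k^2} < \frac1k$, which is the statement. The only point that needs care is the threshold $k \geq 5$. At $k = 4$ the boundary case gives equality ($16 = 2^4$), and the strict inequality fails there, which is exactly why the hypothesis starts at $5$.
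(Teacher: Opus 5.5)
Your proposal is correct and follows essentially the same route as the paper: use monotonicity of $\frac{\log x}{x}$ to reduce to the boundary point $x = k^2$, then reduce that case to $k^2 < 2^k$ for $k \geq 5$. The paper simply states that $\frac{\log x}{x}$ is decreasing for $x \geq 3$ and that $k^2 < 2^k$ holds for $k \geq 5$; you supply the derivative computation and the induction behind these facts, and you explain why $k = 4$ fails.
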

\begin{proof}
  The function $\frac{\log x}{x}$ is strictly decreasing for $x \geq 3$. Therefore, $k \geq 5$ and $x \geq k^2$ imply:
  $$ \frac{\log x}{x} \;\leq\; \frac{\log k^2}{k^2} \;<\; \frac{1}{k}, $$
  since the last inequality is equivalent to $k^2 < 2^k$, which is true for any $k \geq 5$. 
\end{proof}

\begin{lemma}\label{preps}
  For any $u, v \in \mathbb{N}$, such that $2 \leq u < v$: \; $ \frac{1}{u(v^2-1)} \;<\; \frac{1}{2}\left(1 - \frac{\log u}{\log v}\right)$.
\end{lemma}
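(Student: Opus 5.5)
We need to prove that for integers $2\le u<v$,
$$\frac{1}{u(v^2-1)} < \frac12\Bigl(1-\frac{\log u}{\log v}\Bigr).$$
The plan is to bound the right-hand side from below by something that only depends on the gap between $u$ and $v$. Then we compare that bound with the left-hand side, which is tiny.

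First, rewrite the right-hand side. We have $1-\frac{\log u}{\log v} = \frac{\log v-\log u}{\log v} = \frac{\log(v/u)}{\log v}$. Since $u<v$ are integers, $v\ge u+1$, so $v/u \ge 1+\frac1u$. Using $\ln(1+t)\ge \frac{t}{1+t}$ for $t>0$, we get
$$\log\frac vu \;\ge\; \log\Bigl(1+\frac1u\Bigr) \;\ge\; \frac{1}{(u+1)\ln 2} \;\ge\; \frac{1}{v\ln 2}.$$
Hence the right-hand side is at least $\frac{1}{2v\ln 2\,\log v} = \frac{1}{2v\ln v}$.

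It therefore suffices to show
$$\frac{1}{u(v^2-1)} < \frac{1}{2v\ln v}, \quad\text{that is,}\quad 2v\ln v < u(v^2-1).$$
Since $u\ge 2$, it is enough to prove $v\ln v < v^2-1$ for $v\ge 3$ (recall $v>u\ge 2$). We have $\ln v\le v-1$, so $v\ln v\le v^2-v$. Moreover $v^2-v<v^2-1$ because $v>1$. This gives the claim.

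The only delicate point is choosing the elementary logarithm inequality $\ln(1+t)\ge t/(1+t)$ so that the lower bound is clean. After that, the comparison is very loose, so no numeric case-checking of small values should be needed.
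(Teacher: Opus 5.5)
Your proof is correct, but it takes a different route from the paper's.

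The paper never estimates a logarithm directly. It notes that, for $n>0$, the inequality $\frac{1}{n} < \frac12\bigl(1-\frac{\log u}{\log v}\bigr)$ is equivalent to $(v/u)^n > v^2$. It then applies Bernoulli's inequality $(v/u)^n \ge 1+n\bigl(\frac{v}{u}-1\bigr)$, whose right-hand side reaches $v^2$ once $n \ge \frac{u(v^2-1)}{v-u}$. Taking $n=u(v^2-1)$ and using $v-u\ge 1$ finishes the argument.

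You instead stay on the logarithmic side, using $\ln(1+t)\ge \frac{t}{1+t}$ and $\ln v\le v-1$.

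The paper's argument is purely algebraic, and it explains where the odd-looking quantity $u(v^2-1)$ comes from: it is exactly the threshold Bernoulli's inequality produces when $v-u=1$. Your argument imports two standard analytic facts about $\ln$. In return it gives the much stronger intermediate bound $\frac12\bigl(1-\frac{\log u}{\log v}\bigr)\ge \frac{1}{2v\ln v}$. This shows the lemma is far from tight: the left-hand side $\frac{1}{u(v^2-1)}$ could be replaced by anything below $\frac{1}{2v\ln v}$.
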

\begin{proof} For any $n > 0$ we have:
$$
  \frac{1}{2}\left(1 - \frac{\log u}{\log v}\right) > \frac{1}{n} \;\;\Leftrightarrow\;\; n - \frac{\log u}{\log v}n > 2
  \;\;\Leftrightarrow\;\; \left(\frac{v}{u}\right)^n > v^2.$$
By a well-known elementary inequality: $\left(\frac{v}{u}\right)^n \geq 1 + n(\frac{v}{u}-1)$.
We also have that $1 + n(\frac{v}{u}-1) \geq v^2 \;\Leftrightarrow\; n \geq \frac{u(v^2-1)}{v-u}.$
The last inequality is clearly true for $n = u(v^2-1)$, since $v-u \geq 1$.
\end{proof}

By simultaneous primitive recursion on $\max(\code{MNF(f)}, \code{MNF(g)})$,
we will define the binary functions $\omega, \nu : \LL^2 \to \mathbb{N}$, such that:
\begin{description}
\item[($\ast$)] for all additive primes $f,g$, if $f \prec g$, then $\omega(f,g)$ is a strict $(f,g)$-domination witness and $\nu(f,g)$ is a $(f\cdot x,g)$-domination witness.
\end{description}

If $f = g = 1$, we define $\omega(f,g) = \nu(f,g) = 0$.

Now let $MNF(f) = u_1^{f_1}u_2^{f_2} \ldots u_k^{f_k}$ and $MNF(g) = v_1^{g_1}v_2^{g_2} \ldots v_\ell^{g_\ell}$.

In order to define $\omega(f,g)$ we follow the cases in the first part of the proof of Theorem 3.2 in \cite{Levitz}.

\begin{description}
\item[Case 1:] $f_1 \prec g_1$. For each $i \geq 2$ we have $u_i^{f_i} \preceq u_1^{f_1}$ and in case $u_i^{f_i} \prec u_1^{f_1}$, $\omega(u_i^{f_i},u_1^{f_1})$ is the corresponding strict domination witness. Moreover, $f_1 \prec g_1$ implies that $f_1 x\preceq g_1$ and $\nu(f_1,g_1)$ is the domination witness. For any $x$ greater than all aforementioned witnesses, we have:
$$ f \preceq u_1^{kf_1}, \;\;\; \frac{\log f}{\log v_1^{g_1}} \;\leq\; \frac{k\log x}{x}\left(\frac{1}{\log v_1}\right). $$
By Lemma \ref{prconv} we have that:
$$ x \geq (k+5)^2 \;\Rightarrow\; \frac{\log x}{x} < \frac{1}{k+5} \;\Rightarrow\; \frac{k\log x}{x}\left(\frac{1}{\log v_1}\right) < 1, $$
therefore we may define 
$$ \omega(f,g) = \max(\omega(u_2^{f_2},u_1^{f_1}),\ldots,\omega(u_k^{f_k},u_1^{f_1}), \nu(f_1,g_1), (k+5)^2). $$

\item[Case 2:] $f_1 = g_1 \;\&\; u_1 \prec v_1$. For each $i \geq 2$ we have $f_i \prec f_1$ and let $\nu(f_i,f_1)$ be the $(f_ix,f_1)$-domination witness. We have to analyze the summands in: $ \frac{\log f}{\log v_1^{g_1}} \;=\; \sum_{i=1}^k \frac{f_i\log u_i}{f_1\log v_1}$.
Let us take $\epsilon = \frac{1}{2}\left(1 - \frac{\log u_1}{\log(u_1+1)}\right)$.

In both cases $v_1 \in \mathbb{N}$ and $v_1 = x$ we have:
$$ x > u_1 \;\Rightarrow\; \frac{\log u_1}{\log v_1} \;\leq\; \frac{\log u_1}{\log (u_1+1)}. $$

For every $i \geq 2$ and $x \geq \nu(f_i,f_1),\; x \geq u_i$ we have:
$$ \frac{f_i\log u_i}{f_1\log v_1} \;\leq\; \frac{\log u_i}{x \log v_1} \;\leq\; \frac{\log x}{x}. $$
By Lemma \ref{preps}: $\frac{1}{u_1((u_1+1)^2 - 1)} < \epsilon$. Therefore by Lemma \ref{prconv}:
$$ x \geq \left(ku_1((u_1+1)^2 - 1)+5\right)^2 \;\Rightarrow\; \frac{\log x}{x} \;<\; \frac{1}{ku_1((u_1+1)^2 - 1)+5} \;<\; \frac{\epsilon}{k}. $$
By summing for $i = 2, \ldots, k$ we obtain: $\sum_{i=2}^k \frac{f_i\log u_i}{f_1\log v_1} \;<\; \epsilon$. Finally, by the choice of $\epsilon$:
$$ \sum_{i=1}^k \frac{f_i\log u_i}{f_1 \log v_1} \;=\; \frac{\log u_1}{\log v_1} + \sum_{i=2}^k \frac{f_i\log u_i}{f_1 \log v_1} \;<\; \frac{\log u_1}{\log (u_1+1)} + \epsilon \;<\; 1. $$
We define:
$$ \omega(f,g) = \max(\nu(f_2,f_1),\ldots,\nu(f_k,f_1),u_1+1,u_{max}, \left(ku_1((u_1+1)^2 - 1)+5\right)^2), $$
where $u_{max}$ is the supremum of the set $\{u_i\;|\;2 \leq i \leq k \;\&\; u_i \neq x\}$.

\item[Case 3:] $f_1 = g_1 \;\&\; u_1 = v_1$. Obviously, we define:
$$ \omega(f,g) = \omega(u_2^{f_2} \ldots u_k^{f_k},v_2^{g_2} \ldots v_\ell^{g_\ell}). $$
All other cases are symmetric (with reversed roles of $f$ and $g$).
\end{description}

Now to define $\nu(f,g)$ we follow the third part of the proof of Theorem 3.2 in \cite{Levitz}. We may assume $f \prec g$ (the case $g \prec f$ will be symmetric and we define $\nu(f,f) = 0$).

\begin{description}
\item[Case 1:] $f = 1$. Then $x \preceq g$ and we take $\nu(f,g) = 0$.

\item[Case 2:] $f \neq 1$. Then $x \preceq f$. Let $w$ be the common part of $MNF(f)$ and $MNF(g)$.
Thus we have $f = f'w$ and $g = g'w$, where $f'$ and $g'$ are additive primes, which do not have common terms in their MNF.
We will show that $f'x \preceq g'$ and from there we will extract the $(fx,g)$-domination witness.


\item[Case 2.1:] $f' = 1$. Then $g' \neq 1$, therefore $x \preceq g'$. We define $\nu(f,g) = 0$.

\item[Case 2.2:] $f' \neq 1$. We have $MNF(f') = s_1^{t_1}s_2^{t_2} \ldots s_m^{t_m}$ for $m \geq 1$
and $MNF(g') = r_1^{q_1}r_2^{q_2}\ldots r_p^{q_p}$ for $p \geq 1$. We compare $s_1^{t_1}$ and $r_1^{q_1}$.

\item[Case 2.2.1:] $t_1 \prec q_1$. Then Case 1 in the first part shows that $f'x \prec g'$,
that is $s_1^{t_1}s_2^{t_2} \ldots s_m^{t_m}x \prec r_1^{q_1}r_2^{q_2}\ldots r_p^{q_p}$. We define:
\begin{align*}
 \nu(f,g) &= \omega(s_1^{t_1}s_2^{t_2} \ldots s_m^{t_m}x,r_1^{q_1}r_2^{q_2}\ldots r_p^{q_p})\\
          &= \max(\omega(s_2^{t_2},s_1^{t_1}),\ldots,\omega(s_m^{t_m},s_1^{t_1}), \nu(t_1,q_1), (m+6)^2).
\end{align*}

\item[Case 2.2.2:] $t_1 = q_1 \;\&\; s_1 \prec r_1$. Then Case 2 in the first part shows that $f'x \prec g'$,
that is $s_1^{t_1}s_2^{t_2} \ldots s_m^{t_m}x \prec r_1^{q_1}r_2^{q_2}\ldots r_p^{q_p}$. We define:
\begin{align*}
 \nu(f,g) &= \omega(s_1^{t_1}s_2^{t_2} \ldots s_m^{t_m}x,r_1^{q_1}r_2^{q_2}\ldots r_p^{q_p})\\
          &= \max(\nu(t_2,t_1),\ldots,\nu(t_m,t_1),s_1+1,s_{max}, \left((m+1)s_1((s_1+1)^2 - 1)+5\right)^2).
\end{align*}
The case $t_1 = q_1 \;\&\; s_1 = r_1$ is by assumption not possible.
\end{description}

And finally, we will define $d : \LL^2 \to \mathbb{N}$, such that for any $f,g\in \LL$,
$f \prec g$ implies that $d(f,g)$ is a strict $(f,g)$-domination witness.

Given $f, g \in \LL$, we first compute $ANF(f) = p_1 + p_2 + \ldots p_k$ and $ANF(g) = q_1 + q_2 + \ldots + q_m$. If $ANF(f)$ is a prefix of $ANF(g)$, we define $d(f,g) = 0$.
Otherwise, let $i \leq k$ be the first index, such that $p_i \neq q_i$.
\begin{description}
\item[Case 1:] $p_i \prec q_i$. Then $p_i x \;\leq\; q_i$ for $x \geq \nu(p_i,q_i)$. We have $p_i + \ldots + p_k \;\leq\; k\cdot p_i \leq p_ix \leq q_i + \ldots + q_k$ for $x \geq k$. We define $d(f,g) = \max(\nu(p_i,q_i),k)$.

\item[Case 2:] $q_i \prec p_i$. Symmetric, $d(f,g) = \max(\nu(q_i,p_i),m)$.
\end{description}

Of course, $d$ is primitive recursive on the codes. We conclude that the set $\widehat{T}$ from Eq.~(\ref{equ:T-hat}) satisfies Definition~\ref{def:norm-form} for the pair $(\widehat{\mathcal{Q}},\mathcal{L})$. This implies that $T$ is a set of normal forms for the $L$-family $(\mathcal{Q},\mathcal{F})$. By \cref{theo:nonstandardness-meta}, each of the sets $\mathcal{F}$ and $\mathcal{L}_c = \mathcal{F} \cup \{\text{the constant functions}\}$ is not a basis for punctual standardness.
Corollary~\ref{corollary_Levitz} is proved.
\end{proof}

\section{Sufficient Condition for Punctual Standardness}\label{sec contribution 2 technical}

In this section we establish sufficient conditions for punctual standardness, building on the results concerning punctual categoricity from \cite{kalimullin_algebraic_2017}. We recall that $(\psi_e)_{e\in\mathbb{N}}$ is a computable enumeration of the class of unary primitive recursive functions. For convenience we write $Nf(e,x,y,s)$ for the elementary relation that holds precisely when $\psi_{e,s}(x)\!\downarrow$ and $\psi_e(x) = y$. 

For the moment, we restrict the language to one constant and two unary functions. Let $\mathcal{B}_n$ be the $n$-th punctual structure in this language, defined by $\mathcal{B}_n = (\mathbb{N}, o_n, s_n, c_n)$, where $n$ is the code of a triple, $o_n = (n)_0, s_n = \psi_{(n)_1}, c_n = \psi_{(n)_2}$.

When counting the number of steps in the two lemmas below, we consider only the parts of computation that invoke the relation $Nf$.

\begin{lemma} The relation $Cyc$ is elementary, where $Cyc(n,y,l,s)$ holds precisely when $y$ has $c_n$-cycle length $l$ and this verification requires $\leq s$ steps.
\end{lemma}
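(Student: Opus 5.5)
The plan is to write $Cyc(n,y,l,s)$ explicitly as a bounded formula whose matrix uses only $Nf$ and elementary operations, with every quantifier bounded by $s$ or by a variable among $n,y,l,s$.

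The verification that $y$ has $c_n$-cycle length $l$ is the natural procedure: compute the orbit $y_0=y$, $y_{i+1}=c_n(y_i)=\psi_{(n)_2}(y_i)$ for $i<l$. Each evaluation is certified by some $s_i$ with $Nf((n)_2,y_i,y_{i+1},s_i)$. The orbit must satisfy $y_l=y$ and $y_i\neq y$ for $0<i<l$. We also require $l\ge 1$.

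We count ``steps'' as the total of the $s_i$, so the condition reads $\sum_{i<l}s_i\le s$; adopting this convention is consistent with the remark preceding the lemma. Then $Cyc(n,y,l,s)$ holds iff there is a sequence code $z$, listing pairs $\langle y_i,s_i\rangle$ for $i\le l$, such that the following hold:
\begin{enumerate}[label=(\roman*)]
\item $(z)_0$ has first component $y$;
\item $Nf((n)_2,y_i,y_{i+1},s_i)$ for every $i<l$;
\item $y_l=y$ and $y_i\neq y$ for $0<i<l$;
\item $\sum_{i<l}s_i\le s$.
\end{enumerate}
All clauses inside are elementary, since $Nf$ is elementary and bounded sums and bounded quantifiers over $i<l$ preserve elementarity.

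The key point is that the existential quantifier over $z$ can be elementarily bounded. By property (\ref{equ:max}), $Nf((n)_2,y_i,y_{i+1},s_i)$ forces $s_i>\max((n)_2,y_i,y_{i+1})$. Hence every $y_i$ and every $s_i$ is at most $s$ (for $y_0=y$ note $s_0>y$), and $l\le s$ since each $s_i\ge1$. The code of a sequence of length $\le s+1$ with entries $\le \langle s,s\rangle$ is bounded by an elementary function of $s$, for instance $2^{(\langle s,s\rangle+1)(s+2)}$ under a standard prime-power or binary coding. So the search is a bounded existential with an elementary bound, and elementary relations are closed under such quantification.

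Uniqueness of the orbit is automatic because $\psi_e$ is a function, so the determined $l$ is indeed the cycle length. If one does not want to rely on property (\ref{equ:max}) for the bound on $y_{i+1}$, it can be replaced by requiring explicitly that $y_i\le s$ in the definition of ``requires $\le s$ steps''.

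I expect the main obstacle to be purely definitional: fixing exactly what ``this verification requires $\le s$ steps'' means so that it interacts well with the later lemma. I will adopt the cumulative-sum convention above. Otherwise the proof is a routine closure argument, namely bounded quantification together with course-of-values coding with an elementary bound.
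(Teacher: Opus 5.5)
Your proof is correct and is essentially the paper's. Both express $Cyc$ as a bounded existential over a coded orbit list whose consecutive entries are certified by $Nf$, with an elementary bound on the code because, by (\ref{equ:max}), all entries lie below $s$; your no-early-return condition is equivalent to the paper's pairwise distinctness $L_i\neq L_j$ for $i<j<l$, since $c_n$ is a function.

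The only difference is the step convention. The paper uses a per-call bound, $Nf((n)_2,L_i,L_{i+1},s)$ with the same $s$ for every $i$, which matches how $PatCyc$ reuses a single $s$. Your cumulative $\sum_{i<l}s_i\le s$ is an equally elementary and harmless alternative.
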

\begin{proof}
  Informally, $Cyc(n,y,l,s)$ holds when the sequence $y, c_n(y), c_n^2(y), \ldots, c_n^{l-1}(y)$ consists of distinct elements, satisfies $c_n^l(y) = y$, and all computations take $\leq s$ steps.

  Therefore, $Cyc(n,y,l,s)$ holds if and only if there exists a list $L = \langle L_0,L_1,\ldots, L_l\rangle$ such that: (1) $L_0 = L_l = y$; (2) $\forall i < l$, $Nf((n)_2, L_i, L_{i+1}, s)$; (3) $\forall i < j < l$, $L_i \neq L_j$.
  
  Moreover, the code of $L$ is bounded by $\langle y, s, \ldots, s \rangle$ of length $l+1$, which is elementary in $y,s,l$.
\end{proof}

\begin{lemma} The relation $PatCyc$ is elementary, where $PatCyc(n,i,x,s)$ holds precisely when the elements
  $$ y_0 = s_n^{i}(x),\;\; y_1 = s_n^{2^{n+1}}(y_0),\;\; \ldots,\;\; y_x = s_n^{2^{n+1}}(y_{x-1}),\;\; y_{x+1} = s_n^{2^{n+1}}(y_x)$$
  have $c_n$-cycle lengths $2n+1,2n+2,\ldots,2n+2,2n+1$, respectively, and these computations take \emph{exactly} $s$ steps.
\end{lemma}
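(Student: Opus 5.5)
We prove that $PatCyc$ is elementary by writing it as a bounded existential statement over a witnessing list, as was done for $Cyc$. The only real difficulty is that the relevant quantities (the number $x\cdot 2^{n+1}+i$ of applications of $s_n$, the cycle lengths, the intermediate values) must all be bounded by elementary functions of $n,i,x,s$. Since we count only the steps that invoke $Nf$, and each such invocation costs at least one step, the number $s$ bounds everything in sight.

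The first step is to handle the $s_n$-path. We introduce a list $P=\langle P_0,\ldots,P_N\rangle$ with $N = i + (x+1)2^{n+1}$, $P_0=x$, and $Nf((n)_1,P_j,P_{j+1},s_j)$ for each $j<N$. Here the individual step counts $s_j$ are also recorded in the list, say as pairs $\pair{P_j,s_j}$. The values $y_k$ are then read off as $y_k=P_{i+k2^{n+1}}$ for $k\le x+1$.

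The second step handles the cycle lengths. For each $k\le x+1$ we attach a list witnessing $Cyc$, i.e.\ a list $L^{(k)}$ of length $l_k+1$, where $l_0=l_{x+1}=2n+1$ and $l_k=2n+2$ otherwise. This list comes with its own per-step counts, and its entries must be pairwise distinct, exactly as in the previous lemma. The condition ``exactly $s$ steps'' then becomes an elementary equation: the sum of all recorded step counts along $P$ and along the $L^{(k)}$ equals $s$. This sum is a bounded sum over list entries and is therefore elementary.

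The last step, and the one needing care, is bounding the code of the whole witness $W=\langle P,L^{(0)},\ldots,L^{(x+1)}\rangle$ elementarily. The plan is to first note that if the relation holds, the total number of $Nf$-invocations is at most $s$. This gives $N\le s$ (so $i,x,2^{n+1}\le s$) and $\sum_k (l_k+1)\le s$. By property (\ref{equ:max}), every value $P_j$, every list entry, and every individual step count is also $<s$. Hence $W$ is a list of at most $s+1$ sublists, each of length at most $s+1$, with entries below $\pair{s,s}$. Its code is therefore bounded by an iterated-pairing bound of the form $\pair{\pair{s,s},\ldots}$ of length $s+1$, nested twice, which is elementary in $s$ (roughly $2^{O(s^2)}$ for standard list codings). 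The relation $PatCyc(n,i,x,s)$ is thus equivalent to $\exists W\le b(s)$ with an elementary matrix built from $Nf$, bounded quantifiers over positions, and the bounded sum. It is therefore elementary. If $N>s$, the relation simply fails, and this case is decided by an elementary test before the search.
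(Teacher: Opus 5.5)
Your overall strategy is the same as the paper's: a bounded existential search over a witnessing list, with an elementary bound on its code. You differ in what $s$ means. Also, as written, one clause does not capture ``exactly''.

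\textbf{How your route differs.} The paper keeps the convention fixed by the $Cyc$ lemma: ``$\le s$ steps'' means every $Nf$-call is run with the common bound $s$.
\begin{itemize}
\item It defines $PatCyc'(n,i,x,s)$ by a single list $\langle L_0,\ldots,L_l\rangle$ with $l=i+(x+1)2^{n+1}$, $L_0=x$, and $Nf((n)_1,L_j,L_{j+1},s)$ for all $j<l$. It adds the conditions $Cyc(n,\cdot,2n+1,s)$ and $Cyc(n,\cdot,2n+2,s)$ at the positions $i+k2^{n+1}$, using $Cyc$ as a black box.
\item This relation is monotone in $s$, so exactness is simply $PatCyc'(n,i,x,s)\wedge\neg PatCyc'(n,i,x,s-1)$. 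Here $s$ is the least common bound, not a total.
\item The witness is bounded by $\langle x,s,\ldots,s\rangle$ of length $l+1$. This is elementary in $n,i,x,s$ even though $l$ involves $2^{n+1}$, so the paper never needs $l\le s$.
\end{itemize}
You instead take $s$ to be the total number of steps summed over all calls, and you attach per-call counts inside the cycle lists too. So you are not really using $Cyc$ ``exactly as in the previous lemma''. This buys you a witness bound in $s$ alone and an immediate rejection when $N>s$. It costs extra bookkeeping, and it yields a different relation from the paper's $PatCyc$: a different $s$ then enters $u_x=y'+s$ in the next theorem. Once repaired as below, both relations are elementary and determine $s$ uniquely.

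\textbf{The clause that fails as written.} You only require $Nf((n)_1,P_j,P_{j+1},s_j)$. That says the $j$-th computation converges within $s_j$ steps, not in exactly $s_j$ steps, so the recorded counts can be padded. Suppose the true total is $T<s$. Raising one $s_j$ by $s-T$ keeps every count below $s$, because there are at least two calls and each costs at least one step. So the padded witness still fits under your bound $b(s)$, and ``the sum equals $s$'' holds for every $s\ge T$. You have defined ``at most $s$'', not ``exactly $s$''.

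There are two ways to fix this:
\begin{itemize}
\item add $\neg Nf((n)_1,P_j,P_{j+1},s_j-1)$, and likewise for the entries of each $L^{(k)}$, so that the recorded counts are exact; or
\item as the paper does, define the monotone version with sum $\le s$ and take its difference at $s$ and $s-1$.
\end{itemize}
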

\begin{proof} We define $PatCyc'(n,i,x,s)$ with the same meaning, but for $\leq s$ steps and then we set
  $$PatCyc(n,i,x,s) \;\;\Leftrightarrow\;\; PatCyc'(n,i,x,s) \;\&\; \neg PatCyc'(n,i,x,s-1).$$

  Thus, $PatCyc'(n,i,x,s)$ if and only if there exists a list $L = \langle L_0,L_1,\ldots, L_l\rangle$, such that:

  \begin{enumerate}
  \item $l = i+(x+1)\cdot 2^{n+1}$,
  
  \item $L_0 = x$,
  
  \item $\forall i < l$, $Nf((n)_1, L_i, L_{i+1}, s)$,
  
  \item $Cyc(n,L_i,2n+1,s)$,

  \item $\forall j < x, \; Cyc(n,L_{i+(j+1)\cdot 2^{n+1}},2n+2,s)$,
  
  \item $Cyc(n,L_{i+(x+1)\cdot 2^{n+1}},2n+1,s)$.
  \end{enumerate}
  As in the previous lemma, the code of $L$ is bounded by $\langle x, s, \ldots, s \rangle$ of length $l+1$, which is elementary in $n,i,x,s$.
\end{proof}

\begin{theorem} The function $f$, defined in Proposition 4.2 of \cite{kalimullin_algebraic_2017}, is elementary.
\end{theorem}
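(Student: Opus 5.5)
The plan is to unfold the definition of $f$ from Proposition~4.2 of \cite{kalimullin_algebraic_2017} and rewrite it as a composition of elementary functions and \emph{bounded} minimizations whose matrices are $Cyc$, $PatCyc$ and $Nf$. The aim is to show that every search in the original algorithm can be bounded by an elementary function of the inputs. Recall how that construction works. For the $n$-th candidate copy $\mathcal{B}_n$, the structure $\mathcal B$ places along its $s$-chain blocks of $c$-cycles with the pattern $2n+1,2n+2,\ldots,2n+2,2n+1$ at spacing $2^{n+1}$. The value of $f$ is then determined by locating, in $\mathcal B_n$, the unique place and the unique number of computation steps at which such a pattern is realized. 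This is exactly the situation captured by $PatCyc(n,i,x,s)$.

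First, I would write $f$ explicitly in the form
$$ f(n,x) \;=\; \text{(elementary function of)}\; \mu \langle i,s\rangle \leq \beta(n,x)\; \bigl[PatCyc(n,i,x,s)\bigr], $$
possibly with a post-processing step that reads off the image of $x$ from the list witnessing $PatCyc$. Two facts justify this. The list $L$ appearing in the proof of the lemma on $PatCyc$ has a code bounded elementarily in $n,i,x,s$, so the output is recoverable elementarily once $i$ and $s$ are known. The word ``exactly'' in $PatCyc$ makes the witnessing $s$ unique, so the search can equivalently be phrased as an elementary test applied to the candidates. Since the two lemmas above give elementarity of $Cyc$ and $PatCyc$, the whole expression is elementary as soon as the bound $\beta$ is elementary. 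This holds because bounded $\mu$ with an elementary bound preserves $\mathcal E^3$.

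The main obstacle is producing an elementary bound $\beta(n,x)$. In \cite{kalimullin_algebraic_2017} the search terminates because of a diagonalization: either $\mathcal B_n$ is not a copy of $\mathcal B$, or the pattern must appear in $\mathcal B_n$ by the time the construction of $\mathcal B$ has committed to it. My first attempt would be to trace the stage bookkeeping of the construction. I would show that the stage at which $\mathcal B$ commits to the pattern relevant to $(n,x)$ is at most exponential in $n+x$, since the gaps are $2^{n+1}$ and there are $x+1$ of them. I would then argue that the number of steps $s$ spent by $\mathcal B_n$ is charged against that stage, using \eqref{equ:max} so that convergence within $s$ steps forces all relevant values below $s$. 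This would give something like $\beta(n,x)=2^{2^{c(n+x)}}$ or a fixed tower of bounded height, which is elementary.

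If direct tracing turns out messy, my fallback is to use the property that, along the construction, the values that must be searched are bounded by elements already appearing in $\mathcal B$ at a stage elementary in the input. I would then bound $i$ by the position and $s$ by the stage. Finally, I would check the remaining clauses of $f$ (the cases where $\mathcal B_n$ is detected to differ from $\mathcal B$ and a default value is returned). These use only finitely many $Nf$-queries with the same bounds, so they are elementary by definition by cases.
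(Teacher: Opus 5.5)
\textbf{There is a genuine gap: the elementary bound $\beta(n,x)$ your plan depends on does not exist.}

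In Proposition~4.2 of \cite{kalimullin_algebraic_2017}, $\mathcal{B}_n$ ranges over \emph{all} punctual structures. So $s_n=\psi_{(n)_1}$ and $c_n=\psi_{(n)_2}$ can have arbitrary primitive recursive running time, in particular non-elementary. If $\mathcal{B}_n$ is not a copy, the pattern may never appear at all. Consequently the step count $s$ with $PatCyc(n,i,x,s)$ is not bounded by any elementary function of $n$ and the stage index $x$. Even the range searched for $i$, namely $[1,\langle n,u_{x-1}\rangle+(x+1)2^{n+1}]$, involves the accumulated step count $u_{x-1}$, which need not be elementarily bounded in $x$. Your appeal to \eqref{equ:max} runs the wrong way: it bounds values by $s$, not $s$ by anything the construction controls.

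You have also misidentified what $f$ is. It is not read off from a single pattern search. It is the unary cycle-length function, with $f(\langle n,m\rangle)\in\{2n+1,2n+2\}$. Its value at position $m$ is determined by which stage's pattern (period $x+2$) is active there, i.e.\ by where $m$ falls among $u_0<u_1<\cdots$, where $u_x$ is the cumulative number of search steps. In the paper's formalization there is no ``detection'' clause returning a default value: a failed search gives $u_{x+1}=u_x$, and the current pattern simply persists.

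The missing idea is that the bound must come from the argument $m$, not from the stage. The construction charges its waiting time to positions along the strand, so only stages with $u_x<m$ influence $f(\langle n,m\rangle)$. The paper proceeds in two steps:
\begin{enumerate}
\item It shows that the \emph{graph} $u_x=y$ is elementary, even though $x\mapsto u_x$ itself need not be. This uses bounded recursion on $x$ producing the list of truth values of $u_x=0,\dots,u_x=y$, with the search for the step count bounded by the candidate value $y$.
\item It computes $f(\langle n,m\rangle)$ by searching for the least $x<m$ with $m\le u_{x+1}$ or $u_{x+1}=u_x$, and for $v<m$ with $u_x=v$. It then returns $2n+1$ or $2n+2$ according to whether $m\equiv v+1 \bmod (x+2)$. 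Here $u_{x+1}=u_x$ is rewritten as $\exists v'<m\,(u_x=v' \,\&\, u_{x+1}=v')$, using $u_x<m$.
\end{enumerate}
Your fallback (``bounded by elements already appearing at a stage elementary in the input'') points in this direction only if ``input'' means $m$. As written, the proof rests on an elementary bound in $(n,x)$, and that bound is false.
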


\begin{proof}
  First we show that the relation $u_x = y$ is elementary in $n,x,y$,
  where $u_x$ is the number of steps up to stage $x$. If the corresponding search fails at stage $x$, we stipulate that $u_x = u_{x-1}$, with the convention $u_{-1} = 0$ (in \cite{kalimullin_algebraic_2017}, $u_x$ is left undefined in such a case).

  The relation $u_x = y$ can be decided by the following recursive algorithm:

  \begin{enumerate}
  
  \item If $x = 0$, set $y' = 0$.

  \item If $x > 0$:

    \begin{enumerate} 
      
      \item Search for $y' \leq y$, such that $u_{x-1} = y'$.

      \item In case the search is unsuccessful, return \emph{false}.

    \end{enumerate}

  \item Search for the least $i \in [1;\langle n,y'\rangle +(x+1)2^{n+1}]$
  
  \;\;\;\;\;\;\;  such that $PatCyc(n,i,x,s)$ holds for some (unique) $s \leq y$.
  
  \item If the search is successful, return $y = y'+s$.

  \item If the search is not successful, return $y = y'$.

  \end{enumerate}

  More formally, if $P(n,x,y)$ is the list of truth values $$u_x = 0,\; u_x = 1,\; \ldots,\; u_x = y,$$
  then the above algorithm computes the code of $P$ using bounded primitive recursion on $x$. Hence $P$ is elementary, and therefore the relation $u_x = y$ is also elementary.

  Observe that if the search in 3. fails for some $x$, it may still succeed for some $x' > x$. In \cite{kalimullin_algebraic_2017}, the construction does not carry out another search after the first unsuccessful attempt, but this distinction is immaterial from our perspective.

  Now, given an input $t$, we compute $f(t)$ using the following elementary algorithm:

  \begin{enumerate}
  
  \item Compute $n,m$, such that $t = \langle n,m\rangle$.

  \item If $u_0 = 0$ or $m \leq u_0$, return $2n+1$.

  \item Search for the least $x < m$ such that $m \leq u_{x+1}$ or $u_{x+1} = u_x$.

  \item Search for $v < m$ such that $u_x = v$.

  \item If $m \equiv v+1 \text{ mod } (x+2)$, return $2n+1$.

  \item If $m \not\equiv v+1 \text{ mod } (x+2)$, return $2n+2$.

  \end{enumerate}
  
  Observe that the relation $u_{x+1} = u_x$ is not elementary, but when it is used in 3. we have $u_0 < \ldots < u_x < m$, so that $u_{x+1} = u_x$ can be replaced by $\exists v' < m\; (u_x = v' \;\&\; u_{x+1} = v')$.
\end{proof}

Next we define the structure $\mathcal{A}_f = (\mathbb{N}, 0, s_K, c_K)$, where the $n$-th cycle has $f(n)$ elements, begins at $2n$, and the other elements of the cycle are filled with successive odd numbers.

\begin{theorem}
  The functions $s_K$ and $c_K$ are elementary.
\end{theorem}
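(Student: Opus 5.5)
Given an element $z$, the plan is to recover its position in $\mathcal{A}_f$ elementarily and then read off the value of $s_K$ or $c_K$. The structure $\mathcal{A}_f$ arranges the cycles as follows: the $n$-th cycle occupies the even number $2n$ together with $f(n)-1$ odd numbers. The odd numbers are used in increasing order across the cycles, so cycle $n$ uses the odd numbers $2S(n)+1, 2S(n)+3, \ldots, 2S(n)+2(f(n)-2)+1$, where
\[
S(n)=\sum_{k<n}(f(k)-1).
\]
The value $s_K$ is the $\omega$-chain through the even elements starting at $0$, so $s_K(2n)=2(n+1)$; whatever convention the construction uses for $s_K$ on odd elements, I will copy it from the intended definition. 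The value $c_K$ moves each element to the next one in its cycle. Concretely, $c_K(2n)=2S(n)+1$ when $f(n)\ge 2$, and $c_K(2n)=2n$ otherwise. Next, $c_K(2S(n)+2j+1)=2S(n)+2j+3$ for $j<f(n)-2$. Finally, the last odd element of cycle $n$ returns to $2n$.

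The key steps are as follows.

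\begin{enumerate}
\item By the preceding theorem, $f$ is elementary. Since the elementary functions are closed under bounded sums, $S(n)$ is elementary.
\item In the construction of Proposition~4.2 of \cite{kalimullin_algebraic_2017}, the values of $f$ are of the form $2n+1$ or $2n+2$. Hence $f(n)\ge 1$, $f(n)$ is bounded by a linear function of $n$, and $S(n)$ is bounded polynomially. This growth bound is what keeps the searches below bounded.
\item For an odd input $z=2m+1$, find the unique $n$ with $S(n)\le m<S(n+1)$. Since $f(k)-1\ge 0$, the function $S$ is nondecreasing. Moreover, $S(n)\ge n$ for all $n\ge 1$ (because $f(n)-1\ge 2n\ge 1$ there), so the owning cycle satisfies $n\le m+1$. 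The search for $n$ is therefore a bounded minimisation, which is elementary.
\item Set $j=m-S(n)$ and apply the case distinction above. This distinction is elementary because it uses only $f$, $S$, comparisons and arithmetic.
\item For an even input $z=2n$, the values are obtained directly from $f(n)$ and $S(n)$.
\end{enumerate}

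All the operations involved—bounded sums, bounded $\mu$, and case definitions by elementary predicates—preserve elementarity, so both $s_K$ and $c_K$ are elementary.

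The main obstacle is matching the precise layout conventions of $\mathcal{A}_f$: exactly how the odd numbers fill the cycles, and what $s_K$ does on odd elements. Once the enumeration through $S(n)$ is fixed, everything is elementary. The one substantive point to verify carefully is the bound in the search for $n$, which depends on the monotonicity and growth of $S$ and hence on the lower bound $f(n)\ge 2n+1$.
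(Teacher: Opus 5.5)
Your plan follows the paper's proof. Your $S(n)=\sum_{k<n}(f(k)-1)$ is the paper's $p(n-1)$, you locate the cycle of an odd input by a bounded search, and you read off $c_K$ by the same four cases.

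However, the step you single out as the substantive one rests on a false fact. The function $f$ of Proposition~4.2 is evaluated at $t=\langle n,m\rangle$ and returns $2n+1$ or $2n+2$ with $n=(t)_0$, not $n=t$. So $f(t)\in\{1,2\}$ whenever $(t)_0=0$. For instance $f(0)=f(\langle 0,0\rangle)=1$, because the first branch ($m=0\le u_0$) applies. Hence ``$f(n)-1\ge 2n$'' is wrong, and $S(n)\ge n$ already fails at $n=1$, where $S(1)=0$. Your bound $n\le m+1$ on the index of the cycle containing $2m+1$ is therefore unsupported.

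This matters. If $f$ could equal $1$ on long stretches, $S$ would stall, and the index of the cycle containing $2m+1$ need not be bounded by any elementary function of $m$. The needed lower bound has to come from the shape of this particular $f$. The linear upper bound on $f$ that you mention plays no role, since bounded sums of elementary functions are elementary anyway.

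The missing ingredient is the property the paper invokes: $f(s)=f(s+1)=1$ never occurs. Then each consecutive pair of summands of $S$ contributes at least $1$, so $S(n)\ge\lfloor n/2\rfloor$. Consequently $S(n)\le m$ forces $n\le 2m+1$, and your $\mu$-search becomes bounded. The paper phrases this as a search over $n,i<x$, and it treats $x=1$ separately via $n=2-f(0)$.

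You should also fix $s_K$ on odd elements rather than defer it. The paper sets $s_K(x)=2n+2$ for every $x$ in the $n$-th cycle, which is elementary once $n$ is known. With these two corrections your argument coincides with the paper's.
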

\begin{proof}
  The cycles $0,1,\ldots,n$ contain $p(n) = \sum_{s=0}^n (f(s) - 1)$ odd numbers. Hence for $n > 0$, the first odd number in the $n$-th cycle is $2p(n-1)+1$.
  
  Given an input $x$, we first determine the number $n$ of the cycle that contains $x$ and then the index $i$ such that, if $x$ is odd, then $x$ is the $i$-th odd element in its cycle.

  \begin{enumerate}
  
  \item If $x$ is even, then $n = \lfloor \frac{x}{2} \rfloor$.
  
  \item If $x = 1$, then $i = 0$ and $n = 2-f(0)$.
  
  \item If $x > 1$ is odd, we search for $n < x$ and $i < x$
  
  \;\;\;\;\;\;\; such that $x = 2p(n-1) + 2i + 1$.

  \end{enumerate}

  Observe that for any $s$, we cannot have $f(s) = f(s+1) = 1$. This justifies the output in 2. and the bound $n < x$.
  
  Finally, we compute $s_K(x) = 2n+2$ and 
    $$ c_K(x) = \begin{cases}
                   x & \text{ if } x \text{ is even and } f(n) = 1,\\
                   2p(n-1) + 1 & \text{ if } x \text{ is even and } f(n) > 1,\\
                   x+2 & \text{ if } x \text{ is odd and } i+2 \neq f(n),\\
                   2n & \text{ if } x \text{ is odd and } i+2 = f(n).
    \end{cases} $$
\end{proof}

\begin{theorem} \label{thm_basis}
  The set $\{s_K, c_K\}$ is a basis for punctual standardness.  
\end{theorem}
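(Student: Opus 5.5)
The plan is to reduce the statement to the punctual categoricity result of Kalimullin, Melnikov and Ng (\cite{kalimullin_algebraic_2017}, Proposition~4.2), and then to apply Observation~\ref{observation}. Fix a copy $\A$ of $\Std$ with isomorphism $c=c_\A$, and assume that $s_K^\A=c\circ s_K\circ c^{-1}$ and $c_K^\A=c\circ c_K\circ c^{-1}$ are primitive recursive. The argument has three steps.

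First, I introduce the structure $\A' = (\mathbb N, c(0), s_K^\A, c_K^\A)$. By construction, $c$ is an isomorphism from $\A_f=(\mathbb N,0,s_K,c_K)$ onto $\A'$. The constant $c(0)$ is just a fixed number, so $\A'$ is a punctual structure. By the two preceding theorems, $s_K$ and $c_K$ are elementary, hence $\A_f$ is itself punctual. Moreover, $\A_f$ is isomorphic to the rigid structure $\B=(\mathbb N,o,s,c)$ of \cite{kalimullin_algebraic_2017}: the $n$-th cycle of $\A_f$ has exactly $f(n)$ elements, where $f$ is the cycle-size function of Proposition~4.2 there, and it is attached to the $n$-th element $2n$ of the $s_K$-chain starting at $0$. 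Therefore $\A_f$ and $\A'$ are both punctual copies of that punctually categorical structure, and they are punctually isomorphic.

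Second, I use rigidity to identify that punctual isomorphism with $c$. Any isomorphism $\A_f\to\A'$ must send $0$ to $c(0)$. It must then respect the $s_K$-chain, so it sends $2n$ to $(s_K^\A)^n(c(0))$. Finally it must respect the cycle maps, so it sends $c_K^j(2n)$ to $(c_K^\A)^j$ of the image of $2n$. Every element of $\A_f$ has this form: it lies on a cycle and is reached by iterating $c_K$ from some $2n$. Hence the isomorphism is unique and equals $c$. It follows that both $c$ and $c^{-1}$ are primitive recursive, so $\A$ is punctually isomorphic to $\Std$, and by Observation~\ref{observation} $\A$ is punctually standard.

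The main point to check carefully is the first step: that the categoricity theorem of \cite{kalimullin_algebraic_2017} really applies to \emph{every} punctual copy of $\A_f$, including ones whose constant is interpreted by an arbitrary number. I expect this to hold, since their theorem quantifies over all punctual presentations in the signature $\{o,s,c\}$, and a constant is always primitive recursive. I would also double-check that $\A_f$ matches their structure exactly: the cycle of length $f(n)$ must be attached to the $n$-th chain element, with no extra elements. This is the content of the elementary presentation built above. The elementarity of $s_K$ and $c_K$ is needed only to ensure that they lie in $\PR$, as Definition~\ref{def:basis} requires.
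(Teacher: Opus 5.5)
Your proposal is correct and follows essentially the same route as the paper. You view $(\mathbb{N}, c_\A(0), s_K^\A, c_K^\A)$ as one of the punctual structures $\mathcal{B}_n$, use the construction of $f$ (punctual categoricity of $\mathcal{A}_f$) to obtain a punctual isomorphism with $\mathcal{A}_f$, identify it with $c_\A$, and conclude via Observation~\ref{observation}. Your explicit rigidity argument spells out the step the paper leaves implicit when it passes from ``$\mathcal{B}_n$ and $\mathcal{A}_f$ are punctually isomorphic'' to ``$c_\A^{-1}$ is primitive recursive''.
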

\begin{proof}
  Fix a copy $\mathcal{A}$ such that $s_K^{\mathcal{A}}$ and $c_K^{\mathcal{A}}$ are primitive recursive. Let $c_\mathcal{A}$ be the unique isomorphism from $\mathcal{S}$ to $\mathcal{A}$. Since $s_K^\mathcal{A}$ and $c_K^\mathcal{A}$ are primitive recursive, we can choose $n$ such that $\mathcal{B}_n = (\mathbb{N},0^\mathcal{A},s_K^\mathcal{A},c_K^\mathcal{A})$.
  Clearly $\mathcal{A}_f = (\mathbb{N}, 0, s_K, c_K)$ is isomorphic to $\mathcal{B}_n$ via $c_\mathcal{A}$. By the construction of $f$, this implies that $\mathcal{B}_n$ and $\mathcal{A}_f$ are punctually isomorphic. Therefore, $c_\mathcal{A}^{-1}$ is also primitive recursive. We conclude from Observation \ref{observation} that $\mathcal{A}$ is punctually standard. 
\end{proof}

\elementarybasis*

\begin{proof}
    Fix any substitution basis for elementary functions $B$ and a copy $\mathcal{A}$ of $\mathcal{S}$ such that every function from $B$ is primitive recursive on $\mathcal{A}$. Then $s_K$ anc $c_K$ are also primitive recursive on $\mathcal{A}$ since they can be obtained from $B$ via substitutions. From Theorem \ref{thm_basis} we conclude that $\mathcal{A}$ is punctually standard.
\end{proof}

\printbibliography

\end{document}